\titlespacing*{\chapter}{0cm}{-\topskip}{10pt}[0pt]
\tikzset{node distance=1.5cm, 
         every state/.style={
           semithick, fill=gray!3,
            minimum size=1.31cm},
         initial text={},     
         double distance=2pt, 
         every edge/.style={  
         draw,
           ->,>=stealth',     
           auto,
           semithick}}
\newenvironment{dedication} 
  {\clearpage           	
   \thispagestyle{empty}	
   \vspace*{\stretch{1}}	 
   \itshape            	 	
   \centering  				
  }
  {\par 					
   \vspace{\stretch{3}} 	
   \clearpage
  }
\newenvironment{motto} 
  {\clearpage           	
   \thispagestyle{empty}	
   \vspace*{\stretch{1}} 
   \centering  				
  }
  {\par 					
   \vspace{\stretch{3}} 	
   \clearpage           	
  }  
\DeclareRobustCommand{\stirlingtwo}{ \genfrac\{\}{0pt}{} } 
\newtheorem{theorem}{Theorem}
\newtheorem*{theorem*}{Theorem}
\newtheorem{proposition}{Proposition}
\newtheorem*{proposition*}{Proposition}
\newtheorem{lemma}{Lemma}
\newtheorem{corollary}{Corollary}
\newtheorem{definition}{Formulation}
\DeclareMathOperator{\diag}{diag}
\DeclareMathOperator{\E}{\mathbb{E}}
\DeclareMathOperator{\V}{\mathbb{V}}
\DeclareMathOperator{\ceil}{ceil}
\def\multiset#1#2{\ensuremath{\left(\kern-.3em\left(\genfrac{}{}{0pt}{}{#1}{#2}\right)\kern-.3em\right)}}
\def\multichoose#1#2{\ensuremath{\left(\kern-.3em\left(\genfrac{}{}{0pt}{}{#1}{#2}\right)\kern-.3em\right)}}
\providecommand{\keywords}[1] 
{
  \small	
  \textbf{\textit{Keywords  }} #1
}
\title{	A Markovian Perspective on the \\
			 Classical Occupancy Problem \\
			\large	with a Generalization to Pure Birth Processes}
\author{Jim van Mechelen}
\date{July, 2020}
\begin{document}

\begin{titlingpage}
\maketitle

\abstract{We study the classical occupancy problem from the viewpoint of its embedding Markov chain. We derive new expressions for the probability mass function and (complementary) distribution function in generalized form. Furthermore, we derive a completely novel sparse bidiagonal system of recursion relations for the (complementary) distribution function and provide its efficient matrix implementation.

Importantly, we generalize these results to the entire class of discrete-time pure birth processes. 
} \newline \newline

\keywords{sampling with replacement, classical occupancy problem (coupon collector's problem), discrete phase-type distributions, discrete-time pure birth processes}

\end{titlingpage}

\clearpage
\thispagestyle{empty}

\vspace{10cm}

Master Thesis in Econometrics Presented for the Degree of Master of Science \\

\begin{tabular}{rl}
  \textit{by} \\
  Candidate:	& Jim van Mechelen  \\ 
  \\
  \textit{before}	&			\\
  Supervisor:	& Dr. M. van de Velden \\ 
  Assessor:		& Dr. A.J. Koning \\
  \\
  				& Department of Econometrics, Erasmus University Rotterdam \\
				& \\ 	
				& \\ 
	 			& Publicly defended on 24 September 2020.
	 			  				
\end{tabular}

\null\vfill
\noindent
$^{\tiny\copyright}$ 2020 - present. All rights reserved. Jim van Mechelen ({vmechelen\{at\}gmail[dot]com}). \\
Copy for \href{https://arxiv.org/}{arXiv}.
\\ No part of this publication may be (re-)produced, stored in a retrieval system, or transmitted in any form or by any means, electronic, mechanical, photocopying, recording, or otherwise, without written permission of the author. For information regarding permissions, \emph{contact the author}.

\begin{dedication}
To my mother \\
Sari de Koning  
\end{dedication}

\begin{motto}
Luctor et Emergo   
\end{motto}

\newpage

\tableofcontents

\newpage

\chapter{Introduction} 
All empirical science starts with observation. Statistics, in particular, is the science concerned with assigning degrees of certainty or uncertainty to statements generalizing from a small number of observations, the sample, to all possible observations, the population.

Sampling comes in two main flavors: \emph{with} and \emph{without} replacement. In this thesis we shall concentrate on the first. For one, because sampling without replacement is well understood and holds few secrets. 

Sampling with replacement has received renewed interest in recent decades following the rising popularity -- some would say revolution -- of non-parametric statistics and machine learning. Two prime examples are \cite{efron1979bootstrap}'s bootstrap, and \cite{breiman2001randomforest}'s random forest. Sampling with replacement is a pivotal element in both Efron's bootstrap procedure as well as Breiman's random forest. Conceptually, sampling with replacement is not more complicated than sampling without replacement, however its probabilistic properties are quite more involved. 

In this thesis we restrict our attention to an essential property of a sample obtained by sampling with replacement, namely: the number of distinct elements that the sample contains. This is the \emph{classical occupancy problem}. We concern ourselves only with the exact distribution and not with its approximation or asymptotics.

The problem is a true classic of probability theory and traces its origins to \cite{demoivre1711mensura} who described it in terms of casting an $n$-faced die $t$ times and asking how many, and with which probability, $k$ distinct faces would be observed. Both Euler and Laplace studied generalizations of the problem \citep{todhunter1865history}.

The classical occupancy problem gets its current name from a twentieth century rephrasing in terms of an urn model (\citet{feller1968ed3, kotz_johnson1977urn_models}; or seq. \Cref{sec:cocc_problem_definition}).

The classical occupancy problem can as well be motivated from its occurrence in natural phenomena and applications. \citet[Ch. 1.2]{feller1968ed3} lists sixteen variants of the problem ranging from the educational birthday problem to applications in physics, chemistry, biology and the distribution of the number of misprints in a book. In fact, the classical occupancy problem makes continual appearances in Feller's classic text. Some further examples include the mathematical analysis of card shuffling, estimating population size in ecology, public health surveillance, and the likelihood of collisions in lotteries \citep{aldous_diaconis1986shuffling, ziegler2014shuffling_cards, chao1984nonparametric, barabesi2011theta_param_est, williamson2009classic_occ}. Finally, in the context of a slightly more general model, \cite{harkness1969occupancy} lists five applications ranging from air battle theory to the spread of infectious disease.

Of theoretical interest is that, even though the order of the sampled elements is discarded by the statistician, sampling without replacement does not result in a uniformly random selection from the universe of multisets. This situation is reminiscent of a similar difference between the Maxwell-Boltzmann and Bose-Einstein statistics \citep{bose1924plancksgesetz, feller1968ed3}.

Notwithstanding these diverse manifestations and applications, the classical occupancy distribution does not appear to be a ``household name" among statisticians. This thesis serves in part to ameliorate that situation.

A focal point in this thesis is that the classical occupancy problem permits a Markov chain embedding. Some authors were aware of the Markovian nature\footnote{By Markovian nature we mean its memoryless property, that is: the continuation of the process depends only on the current state.} of the classical occupancy problem \citep[e.g.][]{polya1930coupon, renyi1962three_proofs, feller1968ed3}, but did not pursue this viewpoint further.
The one notable exception is \cite{uppuluri1971occupancy}. However, we believe that these authors did not exhaust the Markovian vantage point to its full potential.\footnote{For example: lemmas are stated without proof, the distribution function is not treated, etc.} We therefore pose our main research question as follows: 
\begin{quote}
\emph{What can we learn about the classical occupancy problem from a purely Markovian point of view?}
\end{quote}

\paragraph{More Generally.}
Since the embedding Markov chain defines a discrete-time pure birth process, it is natural to ask if, and to what extent, results generalize to the entire class of discrete-time pure birth processes.

Pure birth processes occupy an important role in probability theory. They are widely applied in areas such as biology, and reliability theory, as well as the theory of phase-type distributions, which are a major class of probability distributions \citep[cf.][]{neuts1975PH_distributions_liber_amicorum, ocinneide1989PHrepresentation}. 

Within the phase-type literature pure birth processes play a prominent role as \cite{cumani1982PHcanonical} showed that phase-type distributions with acyclic generator matrices can be reduced to a canonical pure birth representation. The phase-type literature is primarily concerned with phase duration and to a lesser extent with the distribution of the process over its states given a fixed amount of time -- even though these distributions are dual to each other, as we will see in the sequel. 

\subsection*{Thesis Outline}
We now provide an outline for the remainder. Chapters \ref{chpt:pbp_in_disc_time} and \ref{chpt:special_case} form the backbone of this thesis. Preceding these chapters we included a \nameref{chpt:preliminaries} chapter in which we describe some of the main tools that we use throughout this thesis. These tools are the complete homogeneous symmetric polynomial, the $r$-Stirling numbers of the second kind as a special case, and their representation in terms of finite differences.

\paragraph{Pure Birth Process.}
\Cref{chpt:pbp_in_disc_time} starts by introducing the general discrete-time pure birth process. In \Cref{sec:pbp_first_hit_times} we treat the duality between phase duration (first hitting times) and the probability distribution over process states. 

The complete homogeneous symmetric polynomial is extensively used in the \nameref{sec:pbp_results} section (\Cref{sec:pbp_results}) when we derive the exact form of the probability mass function and distribution function over the process states in Theorems \ref{thm:pbp_general_pmf} and \ref{thm:pbp_general_ccdf}, respectively. These are the most general results that we derive in this thesis.
Those theorems are then refined in \Cref{sec:pbp_results_excact_distri_distinct} for the pure birth process having distinct transition probabilities.

Finally, we derive a completely novel system of recurrence relations for the distribution function in \Cref{sec:pbp_results_ccdf_recur}, and provide its matrix form in \Cref{prop:pbp_ccdf_recur_matrixform}. We conclude with the description of a simple algorithm to simulate a pure birth process in \Cref{sec:pbp_simulation}.

\paragraph{Classical Occupancy.}
\Cref{chpt:special_case} centers completely on the classical occupancy problem and begins with a detailed introduction that explains how the classical occupancy problem is connected to sampling with replacement.

We collect various equivalent problem formulations in \Cref{sec:cocc_problem_definition}. The Markov chain embedding is introduced formally in the first part of \Cref{sec:cocc_markov_chain}.

Naturally, all results for the classical occupancy problem follow from substituting its specific parametrization -- in terms of its transition probabilities -- into the propositions of \Cref{chpt:pbp_in_disc_time}. However, we want to present an alternative route to the respective results by means of a matrix-algebraic derivation based on the eigendecomposition of the transition matrix. We derive this eigendecomposition in \Cref{sec:eigendecomposition}. 

Having the eigendecomposition in hand, we derive our results specific to the classical occupancy problem in \Cref{sec:cocc_results}. We do so in explicit form, in terms of $r$-Stirling numbers, and as finite differences. Of particular interest are the initial-state conditioned versions for the probability mass function and distribution function, these are respectively \Cref{prop:transition_probs_classical_occupancy} (Sec. \ref{sec:other_trans_probs}) and \Cref{prop:cdf_from_eigendecomp} (Sec. \ref{sec:cocc_distri_func}).

Finally, in \Cref{sec:rand_occ_model} we make a short digression into \cite{harkness1969occupancy}'s \emph{randomized} occupancy model, and answer a question posed by Harkness affirmatively with expression \eqref{eq:cond_pmf_randomized_occ_harkness}.\\

\noindent We end this thesis with a \nameref{chpt:conclusion_discussion}. The \nameref{chpt:apx} contains two further problem variations, concrete examples, and some background results and proofs.

\chapter{Preliminaries}\label{chpt:preliminaries}
This preliminary chapter serves to record in an organized manner some facts that form the foundation for the rest of this thesis. The prime concept is the complete homogeneous symmetric polynomial. We review its definition, some basic properties and the Stirling numbers of the second kind as a special case.

\section{Complete Homogeneous Symmetric Polynomial}\label{sec:compl_hom_symm_poly}
Let us begin by stating the classic expression defining the complete homogeneous symmetric polynomial, that is 
\begin{equation}\label{eq:def_compl_homo_poly} 
h_{d}(X_0, X_1, ..., X_k) \overset{\text{def}}{=} \sum \limits_{ \substack{c_0^{} + c_1^{} + \cdots + c_k^{} = d \\ c_j^{} \geqslant 0  } }  X_{0}^{c_0^{}} \cdot X_{1}^{c_1^{}} \cdots X_{k}^{c_k^{}} \,,
\end{equation}
for all integers $d \ge 0$ and zero otherwise \citep{macdonald1998symmetric, broder1984r-stirling}. 
The polynomial is symmetric as it is invariant under permutation of the variables $X_j$, it is homogeneous because 
the sum of exponents in each term is constant, and complete as it contains all such terms.

An alternative, equivalent form is the following:
\begin{equation}\label{eq:def_compl_homo_poly_whut}
h_{d}(X_0, X_1, ..., X_k) \overset{\text{def}}{=} \sum \limits_{0 \le i_1 \leqslant i_2 \leqslant \cdots \leqslant i_d \le k}  X_{i_1} \cdot X_{i_2} \cdots X_{i_d} \,,
\end{equation}
for all integer $d \ge 0$, and zero otherwise.

We proceed with a few useful properties.
\begin{lemma}\label{lem:compl_hom_symm_poly_recur}
The complete homogeneous polynomial $h_d(X_0, ..., X_k)$ satisfies
\begin{equation}\label{eq:compl_hom_symm_poly_recur}
h_d(X_0, ..., X_j, ..., X_k) = X_j h_{d-1}(X_0, ..., X_j, ..., X_k) + h_{d}(X_0, ..., X_{j-1}, X_{j+1}, ..., X_{k}),
\end{equation}
for any $j = 0,1,...,k$.
\end{lemma}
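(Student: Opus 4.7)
My approach is to work directly from the defining expression \eqref{eq:def_compl_homo_poly} and partition the index set of the sum according to whether the variable $X_j$ actually appears (with positive exponent) in a given monomial or not. Concretely, for fixed $j \in \{0, 1, \ldots, k\}$, I would split
\[
\{(c_0,\ldots,c_k) : c_0 + \cdots + c_k = d,\ c_i \ge 0\} = \{\,c_j = 0\,\} \,\sqcup\, \{\,c_j \ge 1\,\},
\]
and handle each piece separately.

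\textbf{The two pieces.} On the first piece, setting $c_j = 0$ removes the factor $X_j^{c_j}$ from each monomial, and the remaining indices $(c_0, \ldots, c_{j-1}, c_{j+1}, \ldots, c_k)$ are nonnegative integers summing to $d$. By the definition \eqref{eq:def_compl_homo_poly} applied to the reduced variable list, this subsum is exactly
\[
h_d(X_0, \ldots, X_{j-1}, X_{j+1}, \ldots, X_k),
\]
yielding the second term on the right-hand side of \eqref{eq:compl_hom_symm_poly_recur}. On the second piece, I perform the substitution $c_j' = c_j - 1 \ge 0$. Then $(c_0, \ldots, c_j', \ldots, c_k)$ are nonnegative integers summing to $d - 1$, and each monomial factors as $X_j \cdot X_0^{c_0} \cdots X_j^{c_j'} \cdots X_k^{c_k}$. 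Pulling out the common factor $X_j$ and applying \eqref{eq:def_compl_homo_poly} again identifies the remaining sum as $h_{d-1}(X_0, \ldots, X_j, \ldots, X_k)$, giving the first term.

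\textbf{Obstacles and edge cases.} The derivation is essentially a one-line reindexing, so there is no real obstacle. The only thing to double-check is the boundary behavior: when $d = 0$ the set $\{c_j \ge 1\}$ is empty, so the first term on the right-hand side vanishes (consistent with $h_{-1} \equiv 0$) while the second term equals $1 = h_0$, so the identity holds trivially. Adding the two contributions then closes the proof.
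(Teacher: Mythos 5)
Your proof is correct and is exactly the argument the paper has in mind: the paper gives no formal proof but remarks immediately after the lemma that $h_d(\cdot)$ ``can be split in two parts: a part containing $X_j$ and another excluding $X_j$,'' which is precisely your partition of the index set into $\{c_j = 0\}$ and $\{c_j \ge 1\}$ followed by the reindexing $c_j' = c_j - 1$. Your check of the $d=0$ boundary case is a welcome addition the paper omits.
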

The foregoing property \eqref{eq:compl_hom_symm_poly_recur} expresses that the symmetric polynomial, $h_d(\cdot)$, can be split in two parts: a part containing $X_j$ and another excluding $X_j$. This may seem like a trivial observation, but is in fact the basis for many recurrence relations.

Repeated application of \Cref{lem:compl_hom_symm_poly_recur} on the same variable, yields the following corollary.
\begin{corollary}\label{cor:compl_hom_symm_poly_recur_as_sum}
\begin{equation}
h_t(X_0, ..., X_j, ..., X_k) = \sum_{d=0}^{d^\prime} X_j^d h_{t-d}(X_0, ..., X_{j-1}, X_{j+1} ..., X_k), \quad d^\prime \le t.
\end{equation}
\end{corollary}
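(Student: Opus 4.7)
The corollary is essentially the statement that one can iterate the splitting in Lemma 1 along a single variable $X_j$, so my plan is to formalize exactly that iteration. Concretely, I would proceed by induction on the number of iterations $m$, proving the intermediate identity
\[
h_t(X_0,\ldots,X_k) \;=\; \sum_{d=0}^{m-1} X_j^d \, h_{t-d}(X_0,\ldots,X_{j-1},X_{j+1},\ldots,X_k) \;+\; X_j^m \, h_{t-m}(X_0,\ldots,X_k),
\]
which is the partially unrolled recursion. The base case $m=0$ is trivial (the sum is empty and the residual equals the left-hand side), and the inductive step consists of applying Lemma 1 once to the residual $h_{t-m}(X_0,\ldots,X_k)$, which produces precisely the next summand $X_j^m h_{t-m}(X_0,\ldots,\hat{X}_j,\ldots,X_k)$ and a new residual with $m$ replaced by $m+1$.

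Taking $m = t+1$ then makes the residual $X_j^{t+1} h_{-1}(X_0,\ldots,X_k)$, which vanishes by the convention that $h_d \equiv 0$ for $d<0$ recorded after equations \eqref{eq:def_compl_homo_poly}--\eqref{eq:def_compl_homo_poly_whut}. This yields the stated identity with upper limit $d' = t$; for any $d' \ge t$ the additional terms $X_j^d h_{t-d}(\cdots)$ with $d > t$ also vanish for the same reason, so the formula is insensitive to the precise upper bound as long as it covers $\{0,1,\ldots,t\}$.

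As a sanity check and alternative derivation, I would note that the same identity also follows directly from the monomial definition \eqref{eq:def_compl_homo_poly}: splitting the sum there according to the value $c_j = d$ of the exponent of $X_j$ gives
\[
h_t(X_0,\ldots,X_k) \;=\; \sum_{d=0}^{t} X_j^d \sum_{\substack{c_0+\cdots+c_{j-1}+c_{j+1}+\cdots+c_k = t-d \\ c_i \ge 0}} \prod_{i \ne j} X_i^{c_i} \;=\; \sum_{d=0}^{t} X_j^d \, h_{t-d}(X_0,\ldots,\hat{X}_j,\ldots,X_k),
\]
which is the same conclusion by a single combinatorial regrouping. I do not anticipate a real obstacle here; the only subtle point is purely notational, namely making precise how the condition ``$d' \le t$'' interacts with the convention $h_d = 0$ for $d<0$ so that the upper bound on the summation variable is effectively $\min(d',t)$. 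I would address this in one sentence at the end of the proof.
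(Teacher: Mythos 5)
Your proof is correct and takes essentially the paper's own route: the paper offers no written proof beyond the remark that the corollary follows by repeated application of \Cref{lem:compl_hom_symm_poly_recur}, and your induction on the partially unrolled identity, with the residual $X_j^{m}h_{t-m}(X_0,\ldots,X_k)$ vanishing once $m=t+1$, is precisely the formalization that remark leaves implicit. You are also right to flag the condition $d'\le t$: the displayed identity only holds when the summation actually reaches $d'=t$ (for $d'<t$ the dropped residual $X_j^{d'+1}h_{t-d'-1}(X_0,\ldots,X_k)$ is generally nonzero), so your reading --- and your alternative one-line derivation by splitting the monomial definition \eqref{eq:def_compl_homo_poly} on the exponent of $X_j$ --- resolves an imprecision in the statement rather than introducing one.
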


The final property that we discuss is an explicit formula for the complete homogeneous symmetric polynomial when the variable set, $\{X_0, X_1, ..., X_k\}$, contains only distinct values, i.e. $X_i \neq X_j$ if $i \neq j$.

For such a collection of distinct real numbers, Sylvester's identity provides the following explicit, closed form.
\begin{lemma}[Sylvester's Identity]\label{lem:sylvester_identity}
Let a collection $X_0, X_1, ... , X_k \in \mathbb{R}$, with $ X_i \neq X_j$  if  $i \neq j$, be given, then the identity
\begin{equation}\label{eq:sylvester_identity}
h_{t-k} (X_0, X_1, ..., X_k) = \sum_{j=0}^{k} X_j^t \prod_{\substack{i=0 \\ i \neq j }}^{k} \frac{1}{X_j - X_i}
\end{equation}
holds true.
\end{lemma}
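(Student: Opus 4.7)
The plan is to prove Sylvester's identity using the generating function for the complete homogeneous symmetric polynomials together with partial fraction decomposition.

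First, I would establish the generating function identity
\begin{equation*}
\sum_{d=0}^{\infty} h_{d}(X_0, X_1, \ldots, X_k)\, z^d \;=\; \prod_{i=0}^{k} \frac{1}{1 - X_i z}.
\end{equation*}
This follows directly from the defining expression \eqref{eq:def_compl_homo_poly}: expand each factor as a geometric series $(1 - X_i z)^{-1} = \sum_{c_i \geqslant 0} X_i^{c_i} z^{c_i}$, multiply them together, and collect terms whose total degree in $z$ equals $d$; the resulting coefficient is precisely the sum indexed by $c_0 + c_1 + \cdots + c_k = d$ with $c_j \geqslant 0$.

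Next, because the $X_i$ are pairwise distinct, the rational function on the right has $k+1$ simple poles at $z = 1/X_i$, so it admits the partial fraction decomposition
\begin{equation*}
\prod_{i=0}^{k} \frac{1}{1 - X_i z} \;=\; \sum_{j=0}^{k} \frac{A_j}{1 - X_j z}.
\end{equation*}
The residues $A_j$ are obtained by the standard cover-up method: multiply both sides by $(1 - X_j z)$ and evaluate at $z = 1/X_j$, which yields
\begin{equation*}
A_j \;=\; \prod_{\substack{i=0 \\ i \neq j}}^{k} \frac{1}{1 - X_i/X_j} \;=\; \prod_{\substack{i=0 \\ i \neq j}}^{k} \frac{X_j}{X_j - X_i}.
\end{equation*}

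Finally, I would re-expand each $A_j/(1 - X_j z)$ as the geometric series $A_j \sum_{d \geqslant 0} X_j^d z^d$, and match the coefficient of $z^{t-k}$ on both sides of the two expressions for the generating function. Using $\prod_{i \neq j} X_j = X_j^k$ to absorb the numerator factors, this gives
\begin{equation*}
h_{t-k}(X_0, \ldots, X_k) \;=\; \sum_{j=0}^{k} X_j^{t-k}\prod_{\substack{i=0 \\ i \neq j}}^{k} \frac{X_j}{X_j - X_i} \;=\; \sum_{j=0}^{k} X_j^{t} \prod_{\substack{i=0 \\ i \neq j}}^{k} \frac{1}{X_j - X_i},
\end{equation*}
which is the claimed identity. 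There is no serious obstacle here; the only subtlety worth remarking on is that the generating function identity is treated as an identity of formal power series (or, equivalently, as holding on a neighborhood of $z = 0$ strictly inside $|z| < \min_i |X_i|^{-1}$), so no convergence question arises when comparing coefficients. As an alternative route, one could proceed by induction on $k$ using the recurrence in \Cref{lem:compl_hom_symm_poly_recur}, but the generating function argument is shorter and more transparent.
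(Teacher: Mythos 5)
Your proof is correct. Note that the paper does not actually prove this lemma at all --- it simply defers to \citet{bhatnagar1999sylvester} --- so you have supplied a genuine argument where the text offers only a citation. The generating-function-plus-partial-fractions route is the standard one and everything checks out: the identity $\sum_{d\ge 0} h_d z^d = \prod_i (1-X_i z)^{-1}$ follows from \eqref{eq:def_compl_homo_poly} exactly as you say, the residues are computed correctly, and the bookkeeping $A_j X_j^{t-k} = X_j^{k}\,X_j^{t-k}\prod_{i\neq j}(X_j-X_i)^{-1} = X_j^{t}\prod_{i\neq j}(X_j-X_i)^{-1}$ is right. The one point worth a sentence of care: the lemma as stated allows arbitrary distinct reals, and if some $X_j=0$ the factor $(1-X_j z)^{-1}$ is not a pole, so the ``$k+1$ simple poles'' claim and the cover-up evaluation at $z=1/X_j$ do not literally apply. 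This is harmless --- after clearing denominators both sides are polynomials agreeing on the dense set where all $X_i$ are nonzero, so the identity extends by continuity (and in the paper's application the variables are the $q_i\in(0,1)$, hence nonzero) --- but a complete write-up should either add the hypothesis $X_i\neq 0$ or note the limiting argument. Your suggested alternative, induction on $k$ via \Cref{lem:compl_hom_symm_poly_recur}, would avoid this caveat entirely at the cost of a slightly longer computation.
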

\begin{proof}
See \citet{bhatnagar1999sylvester} and references therein.
\end{proof}

These are the ingredients that we need in the first part of this thesis, that is \Cref{chpt:pbp_in_disc_time}.

\section{Stirling Numbers of the Second Kind}
Stirling numbers of the second kind have a combinatorial interpretation as counting the number of distinct unordered $k$-partitions of an $n$-set. However, when seen through the lens of the complete homogeneous symmetric polynomial they are the special case in which the set of variables $S = \{X_0, X_1, ..., X_k\}$ is taken equal to the set $\{0,1,...,k\}$ of consecutive integers.

The $r$-Stirling numbers are a generalization that takes the set $S$ equal to $\{r, r+1, ..., k\}$ with $0 \le r \le k$. \cite{broder1984r-stirling} provides an extensive treatment which includes their combinatorial interpretation.

The $r$-Stirling numbers of the second kind are thus defined by
\begin{equation}\label{eq:r-stirling2_as_compl_homo_poly}
\begin{aligned}
\stirlingtwo{t}{r  ,\,  k} \overset{\text{def}}{=}  h_{d}(r, r+1, ..., k) 
&= \sum \limits_{\substack{c_r^{} + c_{r+1}^{} + \cdots + c_k^{} = d \\ c_j \geqslant 0}}  r^{c_r^{}} \cdot (r+1)^{c_{r+1}^{}} \cdots k^{c_k^{}}  \\
&= \sum \limits_{r \le i_i \leqslant i_2 \leqslant \cdots \leqslant i_d \le k}  i_1 \cdot i_2 \cdots i_d  \,, \\
\end{aligned} 
\end{equation}
with $d = t-k \ge 0$, and zero otherwise. The ordinary Stirling number of the second kind are recovered when $r$ is taken equal to $0$ or $1$.

The ordinary Stirling numbers of the second kind obey the following relation\footnote{We generalize this identity for $r$-Stirling numbers of the second kind at the end of \Cref{sec:other_trans_probs}.}
\begin{equation}\label{eq:power_is_sum_fallingfactorial_times_stirlingtwo}
n^t = \sum_{j=0}^{n} (n)_j^{} \stirlingtwo{t}{j},
\end{equation}
where $(n)_j$ denotes the falling factorial $n \cdot (n-1) \cdots (n-j+1)$. Furthermore, Stirling numbers of the second kind can be expressed as finite differences.

\subsection{Finite Differences}\label{sec:fin_difference}
The finite forward difference of a function $f(\cdot)$ around $\alpha$ with a positive increment $\varepsilon$ is formally defined by
\begin{equation}
\Delta_\varepsilon \left[ f(x) \right]_{x=\alpha} \overset{\text{def}}{=} f(\alpha+\varepsilon) - f(\alpha).
\end{equation}
When the increment $\varepsilon$ is equal to $1$, we shall omit the subscript. We further distinguish the $k$th order finite forward difference, that is
\begin{equation}\label{eq:def_fin_diff}
\Delta^k \left[ f(x) \right]_{x=\alpha} = \sum_{j=0}^k (-1)^{k-j} \binom{k}{j} f(\alpha + j).
\end{equation}
For classic, extensive treatments on finite differences, we refer to \cite{boole1860fin_diff}, \cite{norlund1924fin_diff} and \cite{jordan1965fin_diff}.  

The Stirling numbers of the second kind are expressible as a finite difference in the following manner:
\begin{equation}\label{eq:stirling2_as_fin_diff}
\stirlingtwo{t}{k} = \frac{1}{k!} \Delta^k \left[ x^t  \right]_{x=0}^{}.
\end{equation}

Somewhat hidden in \citet[Sec. 3]{carlitz1980one} the $r$-Stirling numbers are expressed as a finite difference. The following explicit formula holds true
\begin{equation}\label{eq:r_stirling2_as_fin_diff}
\begin{aligned}
\stirlingtwo{t}{r, \,k} &= \frac{1}{k!} \sum_{j=0}^{k} (-1)^{k-j} \binom{k}{j} (r+j)^t 	\\
						&= \frac{1}{k!} \Delta^k \left[ x^t  \right]_{x=r} 	= \frac{1}{k!} \Delta^k \left[ (r+x)^t  \right]_{x=0}.			
\end{aligned}
\end{equation}
$r$-Stirling numbers of the second kind and their representation as a finite difference are utilized in \Cref{chpt:special_case}.

\chapter[Pure Birth Processes in Discrete Time]{Pure Birth Processes in Discrete Time}\label{chpt:pbp_in_disc_time}
\epigraph{\flushright{\itshape ``Everything should be made as simple as possible, but no simpler."}}{Albert Einstein}

In this chapter we derive our results in their greatest generality, which is as the probability distribution over the process states of a pure birth process in discrete time.

As discrete time increments, a pure birth process accumulates certain (pre-defined) events, variously called ``births", ``transitions",  or ``successes". The current process state, or stage,\footnote{The words ``stage" and ``state" are used synonymously throughout this thesis.} equals the number of ``births" that have occurred until that time. The current state of the process can at each discrete time-step ($t=0,1,2,...\,$) increase by one or remain unchanged, but it can not decrease. 

We denote the probability of a new ``birth" at stage $k$ by $p_k$. The probability of no birth during one unit of time equals $1-p_k$. The subscript $k$ indicates that these probabilities are allowed to depend on the state of the process, but that the transition probabilities are fixed and constant through time.

Perhaps the best known example of a discrete-time pure birth process is the number of successes in a sequence of independent and identical Bernoulli trials, commonly known as the binomial distribution. 

\paragraph{Chapter Outline.} We begin by formally defining the pure birth process in \Cref{sec:pbp_definition}. We then describe the duality between the state distribution and the distribution of each state's first hitting time in \Cref{sec:pbp_first_hit_times}. Importantly, the first hitting times have a simple representation as the convolution of geometric distributions. 

It is the duality between states and phases\footnote{The word ``phase'' is used as a synonym for a state's first hitting time.} that plays a crucial role when we derive the probability mass function and distribution function in \Cref{sec:pbp_results_excact_distri}. In \Cref{sec:pbp_results_ccdf_recur} we derive our main practical contribution, which is a simple recursion relation for the cumulative distribution function, and subsequently obtain its matrix implementation. We conclude with a simple procedure for the simulation a pure birth process in \Cref{sec:pbp_simulation}.

\section{Notation and Definitions}\label{sec:pbp_definition}
Discrete-time Markov chains are often depicted in a transition diagram. Pure birth processes have a particularly simple structure as evidenced from \Cref{fig:pbp_markov_chain}.

\begin{figure}[htb]
\centering
\begin{tikzpicture}[node distance = 2.1cm,  auto] 

\node[state] 						(s0) 	{0};
\node[state, right of=s0] 			(s1) 	{1};
\node[state, right of=s1] 			(s2) 	{2};
\node[draw = none, right of = s2] 	(s3_) 	{$\cdots$};
\node[state, right of=s3_] 			(s4) 	{$k$};
\node[state, right of=s4] 			(s5) 	{$k{+}1$};
\node[draw = none, right of = s5] 	(s6_) 	{$\cdots$}; 
\node[state, thick, right of=s6_] 			(s8) 	{$n$};

\draw (s0) 	edge[loop below] node {$q_0^{}$}  		(s0);
\draw (s1) 	edge[loop below] node {$q_1^{}$} 		(s1);
\draw (s2) 	edge[loop below] node {$q_2^{}$}		(s2);
\draw (s4)	edge[loop below] node {$q_{k}^{}$}		(s4); 
\draw (s5) 	edge[loop below] node {$q_{k{+}1}^{}$}	(s5); 
\draw (s8) 	edge[loop below, thick] node {$1$}				(s8);

\draw[->]  (s0) 	to node [midway, above] { $p_0^{}$ } 	  (s1);
\draw[->]  (s1) 	to node [midway, above] { $p_1^{}$ } 	  (s2);
\draw[->]  (s2) 	to node 				{ $p_2^{}$ } 	  (s3_);
\draw[->]  (s3_) 	to node [midway, above] { $p_{k-1}^{}$ }  (s4);
\draw[->]  (s4) 	to node  				{ $p_{k}^{}$ }    (s5);  
\draw[->]  (s5) 	to node [midway, above] { $p_{k+1}^{}$ }  (s6_); 
\draw[->]  (s6_)	to node [midway, above] { $p_{n-1}^{}$ }  (s8);

\end{tikzpicture}
\caption{Transition diagram of a discrete-time pure birth Markov process $X_t$.}
\label{fig:pbp_markov_chain}
\end{figure}
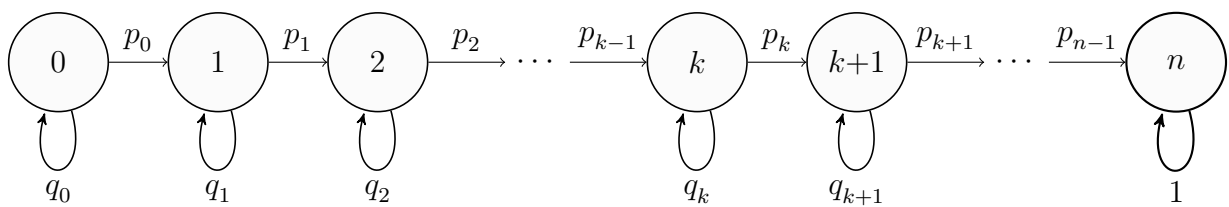

We denote the state of the process at time $t$ by the random variable $X_t$. The state space is denoted by $S$. In the case of \Cref{fig:pbp_markov_chain} $S$ equals $\{0,1,...,n\}$. We denote the collection of all successively visited states up to time $t$ by $\mathbf{X}_t \overset{\text{def}}{=} [X_0, X_1, ..., X_t]^\top$, and refer to $\mathbf{X}_t$ as a $t$-step path. We write $\mathbf{x}_t$ for a specific realization of such a path. 

We assume throughout that the process starts in state $0$ at time $t=0$, i.e. $X_0 = 0$, unless stated otherwise. Furthermore, we write $p_k$ ($k \in S$) for the probability that the process $X_t$ transitions from state $k$ to state $k+1$, and $q_k = 1 - p_k$ for the probability that the process remains in state $k$. 

To avoid trivial and degenerate cases we assume, without loss of generality and unless stated otherwise, that the transition and stop probabilities are strictly between zero and one, i.e. all $ 0 < p_k < 1$ ($k \in S \setminus \{ n \}$), with the exception of the final state which is absorbing and therefore has $q_n = 1$ and $p_n = 0$ (cf. \Cref{fig:pbp_markov_chain}). 

The vector of transition probabilities, $\mathbf{p}_{n+1} = [p_0, p_1,...,p_n]^{\top}$, thus fully defines a pure birth Markov chain.

As is customary when dealing with Markov chains, we provide its (one-step) \emph{transition matrix}, $P$. For the pure birth process in \Cref{fig:pbp_markov_chain}, the transition matrix is
\begin{equation}\label{eq:pbp_transtition_matrix}
\begin{aligned}
P &= 
\begin{bmatrix*}[c]
q_0^{}		&	p_0^{} 	  		&              	&  	 			&   	        	& 					&  						 	 	\\
	&	q_1^{}	&	p_1^{}	               &  	 			&   	        	& 					&  						 	 	\\	
	&  		&		    \small\ddots  	&     \small\ddots 	&   				&  				&  						  		\\
	&  		&    						&        q_{k-1}^{}    	&     p_{k-1}^{}	&		&   					  		\\
	&  		&    						&                  	&     q_k^{}	&	p_k^{}	&   					  		\\
	&  		&	 	 					&					&					& \small\ddots  	& 		\small\ddots	    	\\
	&  		&		      				&					&					&					&	q_{n-1}^{}	&	p_{n-1}^{}		\\
	&  		&		      		  		&					&					&					&	    		 		& 1	
\end{bmatrix*}.
\end{aligned}
\end{equation}
Formally, $P$ is defined by 
\begin{equation}\label{eq:pbp_transtition_matrix_formal}
\left[ P \right]_{i,j}^{} =
\begin{dcases} 
q_i^{}   & \mbox{if }\, j = i; 	\\ 
p_i^{}   & \mbox{if }\, j = i + 1;	\\
0    	 & \mbox{otherwise};
\end{dcases}
\end{equation}
for all $0 \leq i,j < n$; and note $[P]_{n,n}^{} = q_n = 1$. The notation $[A]_{i,j}^{}$ denotes the element in the $i$th row and $j$th column of $A$. Furthermore, $\left[ A \right]_{i,:}$ and $\left[ A \right]_{:,j}$ denote the entire $i$\textsuperscript{th} row and $j$\textsuperscript{th} column of $A$, respectively. The notation $p_{i,j}^{(t)}$ refers to the element in the $i$th row and $j$th column of $P^t$, i.e. $[P^t]_{i,j}^{}$ . These matrix elements are equal to the conditional probability $\Pr(X_{t+s} = j \,\vert\, X_s = i)$,\, $s,t \ge 0$.

Finally, from the identity $[P^{t+1}]_{i,j}^{} = [P^{t}]_{i,:}^{} [P]_{:,j}^{} $ we get following system of recurrence relations\footnote{These may be recognized as the (one-step) Chapman-Kolmogorov equations.} as an equivalent representation of the  transition matrix: 
\begin{equation}\label{eq:pbp_transitions_recur}
p_{i,j}^{(t+1)} = 
\begin{dcases} 
(1-p_0) p_{0,0}^{(t)}  		& \mbox{if }\;  i=j=0; 		\\ 
(1-p_j^{}) p_{i,j}^{(t)}    \,+\,    p_{j-1}^{}p_{i,j-1}^{(t)}  	& \mbox{if } 0 < i \le j \le n \text{ and } j < t; 		\\ 
0 							& \mbox{otherwise}; 		\\ 
\end{dcases}
\end{equation}
for $i,j \in S = \{0,1,2...,n\}$ and $t>0$; and initially $p_{i,i}^{(0)} = 1$, $i \in S$.   

In addition, we use the following notation for the Markov state probabilities (i.e. the probability that the process $X_t$ is in a certain state): 
\begin{itemize}
\item For the probability mass function, we write
\begin{equation}
f(k,t,\mathbf{p}_{n+1}) \overset{\text{def}}{=} \Pr(X_t = k \,\vert\, X_0 = 0, \mathbf{p}_{n+1}) , \quad k \in S,
\end{equation}
as well as the shorthand $\Pr(X_t = k)$.
\item For the cumulative distribution function, we write
\begin{equation}
F(k,t,\mathbf{p}_{n+1}) \overset{\text{def}}{=} \Pr(X_t \le k \,\vert\, X_0 = 0, \mathbf{p}_{n+1}) , \quad k \in S,
\end{equation}
as well as the shorthand $\Pr(X_t \le k)$. 
\item Likewise, for the complementary distribution function, we write
\begin{equation}
\overline{F}(k,t,\mathbf{p}_{n+1}) \overset{\text{def}}{=} \Pr(X_t > k \,\vert\, X_0 = 0, \mathbf{p}_{n+1}) , \quad k \in S,
\end{equation}
as well as the shorthand $\Pr(X_t > k)$. Clearly, $F(k,t,\mathbf{p}_{n+1}) + \overline{F}(k,t,\mathbf{p}_{n+1}) = 1$. For convenience we also define $\overline{F}^*(k,t,\mathbf{p}_{n+1}) \overset{\text{def}}{=} \Pr(X_t \ge k \,\vert\, X_0 = 0, \mathbf{p}_{n+1})$.
\end{itemize}
When there is no risk for misinterpretation, we shall often omit the dependence on $\mathbf{p}_{n+1}$ and $\{X_0 = 0\}$ in the notation. When we speak of the ``state distribution" or ``stage distribution", we consider the above as functions in their first argument, that is in $k$. When we wish to emphasize the argument in which the function is evaluated with respect to the other parameters being held fixed, we shall underline that variable. For example, $F(k,\underline{t},\mathbf{p}_{n+1})$ denotes the function $F$ viewed as a function of $t$ while holding the other parameters, i.e. $k$ and $\mathbf{p}_{n+1}$, fixed.

\section{First Hitting Times}\label{sec:pbp_first_hit_times}
There is a close relationship between the state distribution of the pure birth process $\mathbf{X}$ and the distribution of each state's first hitting time. Importantly, the first hitting time of any state $k$ can be expressed as the sum of \emph{independent} geometrically distributed variables. Letting $T_k$ denote the first hitting time of state $k$, we have
\begin{equation}\label{eq:pbp_first_hit_time_sum_geoms} 
T_k = G_{p_0} + G_{p_1} + \cdots + G_{p_{k-1}}, 
\end{equation} 
where each summand is geometrically distributed, that is $\Pr(G_p = t) = {p(1-p)^{t-1}}$. The time the process spends in a state, $G_p - 1$, is often called waiting time (or sojourn time). The expectation and variance of $T_k$ follow easily from the independence of the random variables. For reference, we provide them here: 
\begin{align}\label{eq:pbp_first_hit_time_exp_var}
\E[T_k] = \sum_{i=0}^{k-1} \frac{1}{p_i}\,,   \qquad \V[T_k] = \sum_{i=0}^{k-1} \frac{1-p_i}{p_i^2}. 
\end{align}

A crucial observation is that the event $\{ T_k = t \}$ is equivalent to the joint event \newline $\{X_{t-1} = k-1\} \cap \{X_t = k\} $, for any state $k = 1,...,n$. In simple terms, this equivalence expresses that in order to hit state $k$ for the first time at time $t$, the process $\mathbf{X}$ must be in state $k-1$ at time $t-1$ and transition to state $k$ at time $t$. Hence, 
\begin{equation}\label{eq:pbp_pmf_phase_in_terms_of_stage}
\begin{aligned}
\Pr(T_k = t) &= \Pr( \{ X_{t-1} = k-1\} \cap \{X_t = k \} ) \\
			 &= \Pr(X_{t-1} = k-1) 	\cdot  \Pr(X_t = k \, \vert \,  X_{t-1} = k-1 ) \\
			 &= \Pr(X_{t-1} = k-1)  \cdot  p_{k-1}^{},
\end{aligned}
\end{equation}
for all $k = 1,2,...,n$ and $t \in \mathbb{N}_{\geqslant 1}^{}$. It then follows that the composite events $\{T_k > t\}$ and $\{X_t \le k - 1\}$ are equivalent as well. Therefore, the distribution functions of $T_k$ and $X_t$ are related by
\begin{equation}\label{eq:pbp_cdf_phase_equiv_stage}
\begin{aligned}
\Pr(T_k > t)  = \Pr(X_t \le k - 1),  \quad \text{and} \quad \Pr(T_k \le t) = \Pr(X_t > k - 1). 
\end{aligned}
\end{equation}
It follows that there are three distinct ways to express the complementary first hitting time distribution:\footnote{Note: the complementary first hitting time distribution function is often called \emph{survival function}.}
\begin{equation}\label{eq:pbp_ccdf_first_hit_time_as_finite_sum_of_stage_pmf}
\begin{aligned}
\Pr(T_k > t) 			&\overset{(1)}{=} 	\sum_{d=t+1}^\infty \Pr(T_k = d)	& \text{(by definition)}\\ 
						&\overset{(2)}{=} 	p_{k-1}^{} \sum_{d=t}^\infty \Pr(X_{d} = k-1) & \text{(by Eq.  \eqref{eq:pbp_pmf_phase_in_terms_of_stage})}	\\ 
						&\overset{(3)}{=}  \Pr(X_t \le k - 1) 	=  \sum_{j=0}^{k-1} \Pr(X_{t} = j).  &  \text{(by Eq. \eqref{eq:pbp_cdf_phase_equiv_stage})}
\end{aligned}
\end{equation}

\section{Results}\label{sec:pbp_results}
Having set the stage in the \nameref{chpt:preliminaries} chapter and the foregoing sections, we are ready to present our results. The results concern the exact probability mass function and cumulative distribution function of the pure birth process state, $X_t$.\footnote{The keen reader will be able to infer the results for the first hitting time distributions without much effort using relations \eqref{eq:pbp_pmf_phase_in_terms_of_stage}, \eqref{eq:pbp_cdf_phase_equiv_stage}, and \eqref{eq:pbp_ccdf_first_hit_time_as_finite_sum_of_stage_pmf}.} 

\subsection{Exact State Distribution}\label{sec:pbp_results_excact_distri}
In this section we derive the exact state distribution of the  generic pure birth process in discrete time. We do so using a weighted path counting argument that results in the \hyperref[sec:compl_hom_symm_poly]{complete homogeneous} symmetric polynomial.

\paragraph{Paths and their Probability.} We start with a straightforward description of the probability of a path that the process $X_t$ may take. Let the process start in state $0$ at time $t=0$, that is $X_0 = 0$. Without loss of generality, assume that the process has transitioned to state $k$ in $t$ discrete time steps, i.e. $X_t = k$.

Any path $\widetilde{\mathbf{X}}_t$ that ends in state $k$, must include the transitions $0 \to 1, 1 \to 2, ..., (k-1) \to k $ as a subpath. These transitions happen in $k$ distinct time steps, leaving $t - k$ time units to be distributed as stops over the states $\{0,1,...,k\}$.

The transitions on this $(0 \to k)$-path contribute $p_0 \cdot p_1 \cdots p_{k-1}$ probability weight to a path ending in state $k$. We denote a choice of $t-k$ stops on this path by $c_0, c_1, ..., c_k$, under the conditions $c_0 + c_1 + \cdots + c_k = t - k$ and $c_j \ge 0$ ($j=0,1,...,k$).

Because each step that the process takes is independent of the previous steps (Markov property), the probability that the process takes this specific path, denoted $\widetilde{\mathbf{x}}_t^*$, equals
\begin{equation}
\Pr(\widetilde{\mathbf{X}}_{t} = \widetilde{\mathbf{x}}^*_{t}) = p_0 \cdot p_1 \cdots p_{k-1} \cdot q_0^{c_0^{*}} \cdot q_1^{c_1^{*}} \cdots q_k^{c_k^{*}}.
\end{equation}
Now, the event $\{X_t = k\}$, short for $\{X_0 = 0\} \cap \{X_t = k\}$, is the union of all paths starting in state $0$ at time $0$ and ending in state $k$ at time $t$. These paths are mutually exclusive, therefore we have
\begin{equation}\label{eq:pbp_deriv_pmf}
\begin{aligned}
\Pr(X_t = k)	&= \sum \limits_{\text{all $(0 \to k)$-paths } \widetilde{\mathbf{x}}_t} \Pr( \widetilde{\mathbf{X}}_{t} = \widetilde{\mathbf{x}}_{t}) \\
				&= \sum \limits_{ \substack{c_0^{} + c_1^{} + \cdots + c_k^{} = t-k \\ c_j^{} \geqslant 0  } } p_0 \cdot p_1 \cdots p_{k-1} \cdot q_0^{c_0^{}} \cdot q_1^{c_1^{}} \cdots q_k^{c_k^{}} \\ 
				&= \prod_{j=0}^{k-1} p_j \sum \limits_{ \substack{c_0^{} + c_1^{} + \cdots + c_k^{} = t-k \\ c_j^{} \geqslant 0  } } q_0^{c_0^{}} \cdot q_1^{c_1^{}} \cdots q_k^{c_k^{}}.
\end{aligned}
\end{equation}
Recognizing the sum in the bottom equation of \eqref{eq:pbp_deriv_pmf} as the complete homogeneous symmetric polynomial $h_{t-k}(q_0, q_1, ..., q_k)$ (cf. Eq. \eqref{eq:def_compl_homo_poly} in the \nameref{chpt:preliminaries}), we have proven the following theorem.

\begin{samepage}
\begin{theorem}[General PBP: Probability Mass Function]\label{thm:pbp_general_pmf}
Let a discrete-time pure birth process be induced by a vector of transition probabilities, $\mathbf{p}_{n+1}$, as defined in \Cref{sec:pbp_definition}; then the \emph{probability mass function} of the process state, $X_t$, is given by
\begin{equation}
f(\underline{k}, t, \mathbf{p}_{n+1}) = h_{t-k}(q_0^{}, q_1^{},..., q_k^{}) \prod_{i=0}^{k-1} p_i^{},
\end{equation}
for all $k \in S = \{0,1,...,n \}$, $t \in \mathbb{N}_{\geqslant 0}$, with $h_{t-k}(\cdot)$ the complete homogeneous symmetric polynomial.
\end{theorem}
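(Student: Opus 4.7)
The plan is to compute $\Pr(X_t = k)$ by enumerating all sample paths $\widetilde{\mathbf{X}}_t = [X_0, X_1, \ldots, X_t]^\top$ with $X_0 = 0$ and $X_t = k$, and summing their probabilities. Because a pure birth process can only increase by one or remain in place at each step, any such path must contain exactly the $k$ upward transitions $0 \to 1, 1 \to 2, \ldots, (k-1) \to k$, each occurring at a distinct time index, together with exactly $t-k$ self-loops (stops) distributed arbitrarily among the visited states $\{0, 1, \ldots, k\}$. The path is therefore fully determined by the tuple $(c_0, c_1, \ldots, c_k)$ where $c_j$ denotes the number of stops taken in state $j$, subject to $c_j \geqslant 0$ and $c_0 + c_1 + \cdots + c_k = t-k$.

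Next, I would use the Markov property to factorize the probability of any specific such path. Since the one-step conditional probabilities are independent across time, the probability of a given path equals the product of its transition probabilities, namely
\begin{equation*}
\Pr(\widetilde{\mathbf{X}}_t = \widetilde{\mathbf{x}}_t) = \left( \prod_{j=0}^{k-1} p_j \right) q_0^{c_0} q_1^{c_1} \cdots q_k^{c_k}.
\end{equation*}
Crucially, the $p_0 p_1 \cdots p_{k-1}$ factor is the same for every $(0 \to k)$-path of length $t$, since each such path uses each upward transition precisely once; only the stop allocation $(c_0, \ldots, c_k)$ varies from path to path.

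Since paths are mutually exclusive and $\{X_t = k\}$ is their disjoint union, I would sum over the admissible compositions to obtain
\begin{equation*}
\Pr(X_t = k) = \prod_{j=0}^{k-1} p_j \sum_{\substack{c_0 + c_1 + \cdots + c_k = t-k \\ c_j \geqslant 0}} q_0^{c_0} q_1^{c_1} \cdots q_k^{c_k}.
\end{equation*}
The inner sum is precisely the definition \eqref{eq:def_compl_homo_poly} of the complete homogeneous symmetric polynomial $h_{t-k}(q_0, q_1, \ldots, q_k)$, which completes the proof.

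I do not anticipate a real obstacle here; the argument is essentially combinatorial and rests on two elementary facts (the rigid structure of pure birth paths and the Markov factorization of path probability). The only point requiring mild care is the boundary cases $k > t$ and $k = 0$: when $k > t$ the process cannot reach state $k$, and the convention that $h_d \equiv 0$ for $d < 0$ delivers the correct value $0$; when $k = 0$, the product $\prod_{i=0}^{-1} p_i$ is the empty product $1$, and $h_t(q_0) = q_0^t$ recovers the obvious answer $\Pr(X_t = 0) = q_0^t$.
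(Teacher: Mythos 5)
Your proposal is correct and follows essentially the same route as the paper: a weighted path-counting argument in which every $(0\to k)$-path is determined by its stop allocation $(c_0,\dots,c_k)$, the Markov property factorizes the path probability into the fixed transition weight $\prod_{j=0}^{k-1}p_j$ times $q_0^{c_0}\cdots q_k^{c_k}$, and the sum over compositions is recognized as $h_{t-k}(q_0,\dots,q_k)$. Your additional remarks on the boundary cases $k>t$ and $k=0$ are a small but welcome bonus the paper leaves implicit.
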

\end{samepage}

The distribution function is by definition equal to $\Pr(X_t \le k) = \sum_{j=0}^k f(j, t, \mathbf{p}_{n+1})$. However, as we see in the next theorem, the \emph{complementary} distribution function, i.e. $\Pr(X_t > k)$, can be combined more satisfactorily with the classic theory of Markov chains with an absorbing state.\footnote{For an introduction to this topic, we refer to \cite{kemeny1976markov}.}

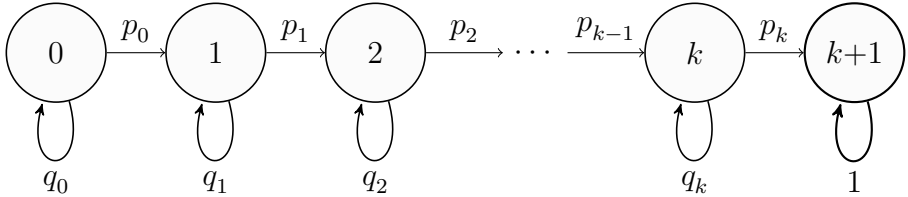
\begin{figure}[htb]
\centering
\begin{tikzpicture}[node distance = 2.1cm,  auto] 

\node[state] 						(s0) 	{0};
\node[state, right of=s0] 			(s1) 	{1};
\node[state, right of=s1] 			(s2) 	{2};
\node[draw = none, right of = s2] 	(s3_) 	{$\cdots$};
\node[state, right of=s3_] 			(s4) 	{$k$};
\node[state,thick, right of=s4] 			(s5) 	{$k{+}1$};

\draw (s0) 	edge[loop below] node {$q_0^{}$}  		(s0);
\draw (s1) 	edge[loop below] node {$q_1^{}$} 		(s1);
\draw (s2) 	edge[loop below] node {$q_2^{}$}		(s2);
\draw (s4)	edge[loop below] node {$q_{k}^{}$}		(s4); 
\draw (s5) 	edge[loop below, thick] node {$1$}				(s5); 

\draw[->]  (s0) 	to node [midway, above] { $p_0^{}$ } 	  	(s1);
\draw[->]  (s1) 	to node [midway, above] { $p_1^{}$ } 	  	(s2);
\draw[->]  (s2) 	to node 				{ $p_2^{}$ } 	  	(s3_);
\draw[->]  (s3_) 	to node [midway, above] { $p_{k-1}^{}$ }  	(s4);
\draw[->]  (s4) 	to node [midway, above] { $p_{k}^{}$ }  	(s5);

\end{tikzpicture}
\caption{Transition diagram of a truncated discrete-time Markov process with an absorbing state at $k+1$ instead of $n$.}
\label{fig:pbp_markov_chain_truncated}
\end{figure}

\begin{theorem}[General PBP: Complementary Distribution Function]\label{thm:pbp_general_ccdf}
Let a discrete-time pure birth process be induced by a vector of transition probabilities, $\mathbf{p}_{n+1}$, as defined in \Cref{sec:pbp_definition}; then the \emph{complementary distribution function} of the process state, $X_t$, is given by
\begin{equation}
\overline{F}(\underline{k}, t, \mathbf{p}_{n+1}) = h_{t-k-1}(q_0, q_1,..., q_k, 1) \prod_{i=0}^k p_i,
\end{equation}
for all $k \in S = \{0,1,...,n\}$ and $t \in \mathbb{N}_{\geqslant 0}$, with $h_{t-k-1}(\cdot)$ the complete homogeneous symmetric polynomial.
\end{theorem}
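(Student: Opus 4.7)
The plan is to exploit the truncated Markov chain construction displayed in the figure immediately preceding the theorem, which reduces the proof to a direct application of \Cref{thm:pbp_general_pmf}. Specifically, I would introduce an auxiliary pure birth process $X_t^\prime$ on the state space $\{0,1,\ldots,k,k+1\}$ governed by the transition probability vector $\mathbf{p}_{k+2}^\prime = [p_0, p_1, \ldots, p_k, 0]^\top$, so that state $k+1$ is absorbing and therefore $q_{k+1} = 1$. All transition probabilities $p_0, \ldots, p_k$ are inherited unchanged from the original process.

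The key observation is the event equivalence
\begin{equation*}
\{ X_t > k \} \;\Longleftrightarrow\; \{ T_{k+1} \le t \} \;\Longleftrightarrow\; \{ X_t^\prime = k+1 \},
\end{equation*}
where $T_{k+1}$ denotes the first hitting time of state $k+1$. The middle equivalence was already established in \eqref{eq:pbp_cdf_phase_equiv_stage}. The outer equivalence holds because the first hitting time $T_{k+1}$ depends only on the transition probabilities $p_0, \ldots, p_k$, which coincide in both processes, and because absorbing $k+1$ in the truncated chain does not alter whether $T_{k+1}\le t$. Consequently, $\Pr(X_t > k) = \Pr(X_t^\prime = k+1)$.

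It then suffices to evaluate the right-hand side using \Cref{thm:pbp_general_pmf} applied to the truncated chain at its top state $k+1$. This substitution yields
\begin{equation*}
\Pr(X_t^\prime = k+1) = h_{t-(k+1)}(q_0, q_1, \ldots, q_k, q_{k+1}) \prod_{i=0}^{k} p_i = h_{t-k-1}(q_0, q_1, \ldots, q_k, 1) \prod_{i=0}^{k} p_i,
\end{equation*}
since $q_{k+1}=1$ by construction of the auxiliary chain. This is precisely the claimed expression for $\overline{F}(\underline{k}, t, \mathbf{p}_{n+1})$.

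I do not anticipate any real obstacles: the only step requiring care is the justification that truncating the chain at $k+1$ does not alter $\Pr(X_t > k)$, which I would argue via the common representation \eqref{eq:pbp_first_hit_time_sum_geoms} of $T_{k+1}$ as a sum of geometric variables involving only $p_0,\ldots,p_k$. An alternative, more pedestrian route would be to sum the probability mass function of \Cref{thm:pbp_general_pmf} from $j = k+1$ to $n$ and collapse the telescoping sum via repeated application of the recursion in \Cref{lem:compl_hom_symm_poly_recur} (splitting off the variable $X_{k+1}=1$), but the truncation argument is far more transparent and is clearly the route signalled by the figure.
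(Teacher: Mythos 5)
Your proposal is correct and follows essentially the same route as the paper: the paper's proof likewise introduces the truncated chain of \Cref{fig:pbp_markov_chain_truncated} with $k+1$ absorbing, identifies $\Pr(\widetilde{X}_t = k+1)$ with $\Pr(X_t > k)$ (via a total-probability argument rather than your hitting-time equivalence, a cosmetic difference), and leaves the final application of \Cref{thm:pbp_general_pmf} to the absorbing state implicit, which you spell out explicitly.
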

\begin{proof}
Consider the truncated Markov process of \Cref{fig:pbp_markov_chain_truncated} and denote it by $\widetilde{X}_t$. Note that
\begin{equation}
\Pr(X_t = j) = \Pr(\widetilde{X}_t = j)
\end{equation}
holds for all $j = 0,1,...,k$ and $t \ge 0$. Furthermore, we have (by definition)
\begin{equation}
\sum_{j=0}^n \Pr(X_t = j) = \sum_{j=0}^k \Pr(X_t = j) + \sum_{j=k+1}^n \Pr(X_t = j) = 1,
\end{equation}
and 
\begin{equation}
\Pr(\widetilde{X}_t = k+1) + \sum_{j=0}^k\Pr(\widetilde{X}_t = j) = 1.
\end{equation}
Therefore, 
\begin{equation}
\Pr(\widetilde{X}_t = k+1) = \sum_{j=k+1}^n \Pr(X_t = j) \overset{\text{def}}{=} \overline{F}(k, t, \mathbf{p}_{n+1}),
\end{equation}
as desired.
\end{proof}

With \Cref{thm:pbp_general_pmf} we gave a new natural proof of \cite{chen2016conv_negbinom}'s Theorem 2 -- which they derived using generating functions in the context of the convolution of geometric distributions. That the resulting expression comprises the complete homogeneous symmetric polynomial seems to have gone unnoticed by these authors. 

We were unable to find \Cref{thm:pbp_general_ccdf} in the literature. We wish to note that, even tough \Cref{thm:pbp_general_ccdf} may seem unsurprising from the perspective of the embedding Markov chain, it yields new expressions when applied to the classical occupancy distribution (\Cref{chpt:special_case}) and the binomial distribution (Appendix \ref{apx:binom_distribution}).

\subsubsection{Distinct Transition Probabilities}\label{sec:pbp_results_excact_distri_distinct}
Now we refine Theorems \ref{thm:pbp_general_pmf} and \ref{thm:pbp_general_ccdf} for the pure birth process having distinct transition probabilities, that is $p_i \neq p_j$ if $i \neq j$. We derive explicit formul{\ae} in this case by applying Sylvester's identity (\Cref{lem:sylvester_identity}) to the preceding theorems.

\begin{proposition}[PBP distinct transitions: Probability Mass Function]\label{prop:pbp_pmf_distinct}
Let a discrete-time pure birth process be induced by the transition probability vector, $\mathbf{p}_{n+1}$ ($\mathbf{q}_{n+1} = \mathbf{1} - \mathbf{p}_{n+1}$), containing only \emph{distinct} elements in $(0,1)$; then the \emph{probability mass function} of the process state, $X_t$, equals
\begin{equation}
f(\underline{k}, t,\mathbf{p}_{n+1}) = \left( \prod_{j=0}^{k-1} p_j^{} \right) \sum_{j=0}^{k} q_j^{t-k} \prod_{\substack{i=0 \\ i \neq j }}^{k} \frac{1}{q_j^{} - q_i^{}} \,.
\end{equation}
\end{proposition}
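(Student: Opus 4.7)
The plan is to deduce the result directly from Theorem~\ref{thm:pbp_general_pmf} by invoking Sylvester's identity. That theorem already gives
\[
f(\underline{k}, t,\mathbf{p}_{n+1}) = h_{t-k}(q_0^{}, q_1^{},..., q_k^{}) \prod_{i=0}^{k-1} p_i^{},
\]
so the only task is to open up the complete homogeneous symmetric polynomial into a closed form valid whenever its arguments are pairwise distinct. Since the product $\prod_{i=0}^{k-1} p_i$ does not involve the summation index, it can be pulled outside right away; all the real work will be on the $h_{t-k}$ factor.

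First I would verify that the distinctness hypothesis carries over from the $p_i$ to the $q_i$. Because $q_i = 1 - p_i$ and the map $x \mapsto 1-x$ is injective, the assumption $p_i \neq p_j$ for $i \neq j$ is equivalent to $q_i \neq q_j$ for $i \neq j$. Hence the tuple $(q_0, q_1, \ldots, q_k)$ meets the hypothesis of Lemma~\ref{lem:sylvester_identity}.

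Next I would apply Sylvester's identity (Lemma~\ref{lem:sylvester_identity}) with $X_j \leftarrow q_j$ to rewrite
\[
h_{t-k}(q_0, q_1, \ldots, q_k) \;=\; \sum_{j=0}^{k} q_j^{\,t} \prod_{\substack{i=0 \\ i \neq j}}^{k} \frac{1}{q_j - q_i},
\]
and then substitute this expression into the formula from Theorem~\ref{thm:pbp_general_pmf}. Factoring $\prod_{i=0}^{k-1} p_i$ outside the sum produces the stated closed form in a single step. The edge case $t<k$ is consistent: $h_{t-k}(\cdot)$ vanishes by the convention in \eqref{eq:def_compl_homo_poly}, and the process cannot have reached state $k$ in fewer than $k$ steps, so both sides are zero.

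I do not anticipate any genuine obstacle, because the proposition is essentially a specialization of Sylvester's identity already recorded in the \nameref{chpt:preliminaries} chapter. The only subtle point is the bookkeeping of the exponent on $q_j$: one must be careful that Sylvester's identity, as stated in Lemma~\ref{lem:sylvester_identity}, places $X_j^{\,t}$ (not $X_j^{\,t-k}$) in the numerator, and verify that this matches the intended form of the proposition before declaring the derivation complete.
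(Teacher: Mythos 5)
Your proof is correct and is exactly the paper's own proof: the paper likewise obtains the result by applying Sylvester's identity (Lemma~\ref{lem:sylvester_identity}) to Theorem~\ref{thm:pbp_general_pmf} in a single step. The ``subtle point'' you flag at the end is, however, a genuine discrepancy rather than mere bookkeeping: Sylvester's identity yields $h_{t-k}(q_0,\dots,q_k)=\sum_{j=0}^{k} q_j^{\,t}\prod_{i\neq j}(q_j-q_i)^{-1}$ with exponent $t$, whereas the proposition as printed carries the exponent $t-k$. Your version is the correct one; for $k=t=1$ the printed formula gives $p_0\bigl(\tfrac{1}{q_0-q_1}+\tfrac{1}{q_1-q_0}\bigr)=0$, while the true value is $\Pr(X_1=1)=p_0$, which is exactly what the exponent $t$ delivers. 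The exponent in the statement (and the matching $q_j^{t-k}$ in Proposition~\ref{prop:pbp_cdf_distinct}, which inherits it through its proof) should therefore read $q_j^{\,t}$.
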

\begin{proof} 
Applying \hyperref[lem:sylvester_identity]{Sylvester's identity} to \Cref{thm:pbp_general_pmf}, the result is immediate.
\end{proof}
\Cref{prop:pbp_pmf_distinct} is equivalent to \citet[Thm. 1]{sen1999convolution_geoms}, but the derivation is shortened to one line, and motivated in a natural way from the underlying pure birth Markov chain. 

The next proposition extends this result to the cumulative distribution function.

\begin{proposition}[PBP distinct transitions: Cumulative Distribution Function]\label{prop:pbp_cdf_distinct}
Let a discrete-time pure birth process be induced by the transition probability vector, $\mathbf{p}_{n+1}$ ($\mathbf{q}_{n+1} = \mathbf{1} - \mathbf{p}_{n+1}$), containing only \emph{distinct} elements in $(0,1)$; then the \emph{cumulative distribution function} of the process state, $X_t$, equals
\begin{equation}
F(\underline{k}, t,\mathbf{p}_{n+1}) = \left( \prod_{j=0}^{k} p_j^{} \right) \sum_{j=0}^{k} \frac{q_j^{t-k}}{1-q_j} \prod_{\substack{i=0 \\ i \neq j }}^{k} \frac{1}{q_j^{} - q_i^{}} \,.
\end{equation}
\end{proposition}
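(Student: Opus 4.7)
The plan is to obtain the CDF indirectly via its complement and invoke Theorem~\ref{thm:pbp_general_ccdf}, after which Sylvester's identity (Lemma~\ref{lem:sylvester_identity}) reduces the remaining complete homogeneous symmetric polynomial to a closed form. This mirrors the one-line argument used for Proposition~\ref{prop:pbp_pmf_distinct}.

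First, I would write $F = 1 - \overline{F}$ and substitute the expression from Theorem~\ref{thm:pbp_general_ccdf}:
\[
F(\underline{k},t,\mathbf{p}_{n+1}) \;=\; 1 \;-\; \Biggl(\prod_{i=0}^{k} p_i\Biggr) \, h_{t-k-1}\bigl(q_0, q_1, \ldots, q_k, 1\bigr).
\]
Since the $p_j$ are distinct and strictly inside $(0,1)$, the $k+2$ arguments $q_0, q_1, \ldots, q_k, 1$ are pairwise distinct, so Sylvester's identity legitimately applies to $h_{t-k-1}(q_0, \ldots, q_k, 1)$ viewed as a polynomial in $k+2$ variables (i.e.\ taking $K=k+1$ in the statement of Lemma~\ref{lem:sylvester_identity}).

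Next, I would apply Sylvester's identity to this $(k+2)$-variable polynomial and split the resulting sum into the contribution from the distinguished argument $1$ plus the contributions from $j = 0, 1, \ldots, k$. Observing that $\prod_{i=0}^{k}(1-q_i) = \prod_{i=0}^{k} p_i$, the term at the distinguished argument contributes $\tfrac{1}{\prod_i p_i}$; once multiplied by $\prod_{i=0}^{k} p_i$ it collapses to exactly $1$, cancelling the leading $1$ above. For each remaining term $j \in \{0,\ldots,k\}$, the denominator contains the extra factor $(q_j - 1) = -(1-q_j) = -p_j$, which I would pull out to produce the factor $\tfrac{1}{1-q_j}$ appearing in the statement, while the accompanying minus sign cancels the overall minus left from $F = 1 - \overline{F}$.

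The main obstacle is the careful bookkeeping of signs: the extracted $(q_j - 1)$, the products $\prod_{i \neq j}(q_j - q_i)$, and the overall minus from $F = 1 - \overline{F}$ must all be reconciled, otherwise the resulting expression will be off by a global sign. A brief sanity check (for example $k = 0$, which must give $F = q_0^t$, and $k = n$, which must give $F = 1$ via collapsing of the telescoping sum) would quickly confirm that the signs align, after which the remaining algebra matches the target expression term by term.
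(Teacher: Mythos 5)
Your argument is correct, but it follows a genuinely different route from the paper's. The paper proves this proposition through the state--phase duality of \Cref{sec:pbp_first_hit_times}: it writes $F(\underline{k},t,\mathbf{p}_{n+1})=\Pr(T_{k+1}>t)=p_k\sum_{d\ge t}\Pr(X_d=k)$ using \eqref{eq:pbp_ccdf_first_hit_time_as_finite_sum_of_stage_pmf} and \eqref{eq:pbp_pmf_phase_in_terms_of_stage}, substitutes \Cref{prop:pbp_pmf_distinct} for $\Pr(X_d=k)$, interchanges the two sums, and evaluates the inner geometric series to produce the factor $1/(1-q_j)$. You instead write $F=1-\overline{F}$, invoke \Cref{thm:pbp_general_ccdf}, and apply Sylvester's identity to the augmented, still pairwise-distinct variable set $\{q_0,\dots,q_k,1\}$; the distinguished term at $1$ contributes $\prod_{i=0}^{k}p_i^{-1}$ and cancels the leading $1$, while each remaining term acquires the factor $(q_j-1)^{-1}=-p_j^{-1}$ whose sign absorbs the complementation. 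I have checked this bookkeeping and it is exactly right. Your route is shorter, stays entirely within the symmetric-polynomial machinery (so it truly parallels the one-line proof of \Cref{prop:pbp_pmf_distinct}), and avoids the interchange of an infinite sum; the paper's route has the advantage of not needing \Cref{thm:pbp_general_ccdf} at all and of exhibiting the $1/(1-q_j)$ factor concretely as a geometric tail.

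One caveat you should be aware of: carried out literally, Sylvester's identity (\Cref{lem:sylvester_identity} applied to $k+2$ variables and degree $t-k-1$) yields the exponent $q_j^{t}$, not $q_j^{t-k}$ as printed in the proposition. The same discrepancy already exists between \Cref{lem:sylvester_identity} and the printed form of \Cref{prop:pbp_pmf_distinct}. A small case settles which is right: for $k=1$, $t=2$ the printed exponent gives $F=1$, whereas $\Pr(X_2\le 1)=1-p_0p_1$; the exponent $t$ gives the correct value. So your derivation is sound, and the apparent mismatch with the statement is a typo in the statement (propagated through the paper's own proof via \Cref{prop:pbp_pmf_distinct}), not an error in your sign or degree accounting. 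Also note that your proposed $k=n$ sanity check is vacuous under the paper's conventions, since the proposition's hypothesis $p_j\in(0,1)$ for all entries is incompatible with the absorbing state's $p_n=0$; the $k=0$ and small-$(k,t)$ checks are the informative ones.
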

\begin{proof}
We proceed from the state-phase duality as summarized in \eqref{eq:pbp_ccdf_first_hit_time_as_finite_sum_of_stage_pmf}. Hence, we have
\begin{equation}
\begin{aligned}
F(\underline{k}, t, \mathbf{p}_{n+1})  	&= \Pr(X_t \le k \,\vert\, \mathbf{p}_{n+1}) = \Pr(T_{k+1} > t \,\vert\, \mathbf{p}_{n+1}) & \text{(by Eq. \eqref{eq:pbp_ccdf_first_hit_time_as_finite_sum_of_stage_pmf})}	\\
									  	&= \sum_{d = t+1}^\infty \Pr(T_{k+1} = d \,\vert\, \mathbf{p}_{n+1})		\\
									  	&= p_k \sum_{d =t}^\infty \Pr(X_{d} = k \,\vert\, \mathbf{p}_{n+1})	& \text{(by Eq. \eqref{eq:pbp_pmf_phase_in_terms_of_stage})}	\\
									  &= p_k \sum_{d =t}^\infty \left( \prod_{j=0}^{k-1} p_j \right) \sum_{j=0}^{k} q_j^{t-k} \prod_{\substack{i=0 \\ i \neq j }}^{k} \frac{1}{q_j - q_i}	& \text{(by \Cref{prop:pbp_pmf_distinct})} \\
									  &= \left( \prod_{j=0}^{k} p_j \right) \sum_{j=0}^{k} \left( \sum_{d=t}^\infty q_j^{d-k} \right) \prod_{\substack{i=0 \\ i \neq j }}^{k} \frac{1}{q_j - q_i} & \text{(rearrange terms)}\\
									  &=  \left( \prod_{i=0}^{k} p_j \right) \sum_{j=0}^{k} \left( \frac{q_j^{t-k}}{1-q_j} \right) \prod_{\substack{i=0 \\ i \neq j }}^{k} \frac{1}{q_j - q_i}, & \text{(geometric series)}
\end{aligned}
\end{equation}
as desired.
\end{proof}
Though it is pleasant to have an explicit formula for the distribution function as in this case, the expression involving the complete homogeneous symmetric polynomial (\Cref{thm:pbp_general_ccdf}) is perhaps more practical as it gives rise to a simple linear system of recurrence relations, as we see in the next section.

\subsection{Recurrence Relations for the Distribution Function}\label{sec:pbp_results_ccdf_recur}
A completely novel result that we present in this thesis is that the cumulative distribution function of any pure birth process obeys a \emph{sparse bidiagonal system of linear recurrence relations} which is almost identical to the one that the probability mass function obeys (cf. recurrence relations in \eqref{eq:pbp_transitions_recur}). This sparse linear system makes exact and efficient computation feasible in large systems.

We present the result for the complementary distribution function here first because of its sparsity and its direct relation to \Cref{thm:pbp_general_ccdf}. 

\newpage
\begin{proposition}[Complementary CDF: Recurrence Relation]\label{prop:pbp_ccdf_recur}
The complementary distribution function of the process state, $X_t$, of any pure birth process induced by the vector of transition probabilities, $\mathbf{p}_{n+1}$, with $0 < p_i < 1$ ($i=0,1,...,n-1$), obeys the recurrence relations 
\begin{equation}
\overline{F}(k,t+1,\mathbf{p}_{n+1}) = 
\begin{dcases}
p_{k}^{} \overline{F}(k-1,t,\mathbf{p}_{n+1}) +  (1 - p_{k}^{}) \overline{F}(k,t,\mathbf{p}_{n+1}) 	& \textnormal{if } \;  k \le t;  \\
0 																									& \textnormal{if } \;  k > t;  
\end{dcases}
\end{equation}
for $1 \le k \le n$ and $t > 0$.
\end{proposition}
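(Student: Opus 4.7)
The plan is to prove this via a \emph{last-step} decomposition of the event $\{X_{t+1} > k\}$, using the pure-birth property that the chain never decreases. Conditioning on the state at time $t$ gives two disjoint contributing cases: either $X_t > k$ already (in which case $X_{t+1} \ge X_t > k$ automatically), or $X_t = k$ and the chain makes a transition to $k+1$ (with probability $p_k$). Every other state at time $t$ leaves $X_{t+1} \le k$. This yields the intermediate identity
\begin{equation*}
\overline{F}(k, t+1, \mathbf{p}_{n+1}) = \overline{F}(k, t, \mathbf{p}_{n+1}) + p_k \, f(k, t, \mathbf{p}_{n+1}).
\end{equation*}
Then I would eliminate $f(k,t,\mathbf{p}_{n+1})$ via the telescoping identity $f(k, t, \mathbf{p}_{n+1}) = \overline{F}(k-1, t, \mathbf{p}_{n+1}) - \overline{F}(k, t, \mathbf{p}_{n+1})$, which follows from $\Pr(X_t \ge k) = \Pr(X_t = k) + \Pr(X_t > k)$. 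Substituting and collecting the coefficients of $\overline{F}(k,t,\mathbf{p}_{n+1})$ and $\overline{F}(k-1,t,\mathbf{p}_{n+1})$ reproduces the claimed bidiagonal recurrence.

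An alternative, purely algebraic route would start from \Cref{thm:pbp_general_ccdf}, writing $\overline{F}(k, t+1, \mathbf{p}_{n+1}) = h_{t-k}(q_0, \ldots, q_k, 1) \prod_{i=0}^{k} p_i$, and then applying \Cref{lem:compl_hom_symm_poly_recur} to split off the variable $X_j = q_k$:
\begin{equation*}
h_{t-k}(q_0, \ldots, q_k, 1) = q_k \, h_{t-k-1}(q_0, \ldots, q_k, 1) + h_{t-k}(q_0, \ldots, q_{k-1}, 1).
\end{equation*}
The first summand, multiplied by $\prod_{i=0}^{k} p_i$, is exactly $q_k \overline{F}(k,t,\mathbf{p}_{n+1}) = (1-p_k)\overline{F}(k,t,\mathbf{p}_{n+1})$; the second summand, after factoring out $p_k$, is $p_k \cdot h_{t-k}(q_0, \ldots, q_{k-1}, 1) \prod_{i=0}^{k-1} p_i = p_k \overline{F}(k-1,t,\mathbf{p}_{n+1})$. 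I would include this second derivation as a brief remark, since it confirms the recurrence is structurally reflected in the symmetric-polynomial representation.

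There is no real obstacle; the only care points are the boundary conditions. For $k > t$, the chain starting at $0$ satisfies $X_{t+1} \le t+1 \le k$, so $\overline{F}(k, t+1, \mathbf{p}_{n+1}) = 0$, matching the second branch of the piecewise definition. For $k = 1$, the recurrence calls for $\overline{F}(0, t, \mathbf{p}_{n+1}) = 1 - q_0^t$, which is well-defined and trivially consistent with both derivations. With these checks the proof is complete.
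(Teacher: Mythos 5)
Your main argument is essentially the paper's own proof: the same two identities --- the state-space telescoping $f(k,t,\mathbf{p}_{n+1}) = \overline{F}(k-1,t,\mathbf{p}_{n+1}) - \overline{F}(k,t,\mathbf{p}_{n+1})$ and the time-forward step $\overline{F}(k,t+1,\mathbf{p}_{n+1}) = \overline{F}(k,t,\mathbf{p}_{n+1}) + p_k f(k,t,\mathbf{p}_{n+1})$ --- combined by substitution, the only cosmetic difference being that you justify the second identity by direct one-step conditioning on $X_t$ while the paper invokes the state--phase duality relations \eqref{eq:pbp_pmf_phase_in_terms_of_stage} and \eqref{eq:pbp_cdf_phase_equiv_stage}. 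Your supplementary derivation from \Cref{thm:pbp_general_ccdf} via \Cref{lem:compl_hom_symm_poly_recur} is also correct and makes a nice independent check, but it is not needed and is not part of the paper's proof.
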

\begin{proof}
Starting with the latter condition ($k > t$): the complementary distribution function must be zero as it is impossible for the process $X_t$ to reach any state greater than $t$.

For the first condition ($k \le t$), we obtain two different expressions: one for the p.m.f., $f(k,t,\mathbf{p}_{n+1})$, in terms of the complementary c.d.f.\,; the other for the complementary c.d.f. when time is incremented by one unit. Upon combining these expressions the recurrence relation is established.
\begin{itemize}
\item By definition we have (in the state space):
\begin{equation}\label{eq:proof:pbp_pmf_def_as_ccdf_difference}
f(k,t,\mathbf{p}_{n+1}) = \overline{F}(k-1,t,\mathbf{p}_{n+1}) - \overline{F}(k,t,\mathbf{p}_{n+1}).
\end{equation}
\item  On the other hand, we have (in the time domain):
\begin{equation}\label{eq:proof:pbp_ccdf_time_forward}
\begin{aligned}
\overline{F}(k,t+1,\mathbf{p}_{n+1}) = \overline{F}(k,t,\mathbf{p}_{n+1}) + p_{k}^{} f(k,t,\mathbf{p}_{n+1}), 
\end{aligned}
\end{equation}
as a consequence of relations \eqref{eq:pbp_pmf_phase_in_terms_of_stage} and \eqref{eq:pbp_cdf_phase_equiv_stage}.
\item Hence, on substituting equation \eqref{eq:proof:pbp_pmf_def_as_ccdf_difference} into \eqref{eq:proof:pbp_ccdf_time_forward}, we obtain
\begin{equation}
\overline{F}(k,t+1,\mathbf{p}_{n+1}) =  p_{k}^{} \overline{F}(k-1,t,\mathbf{p}_{n+1}) + (1 - p_{k}^{})\overline{F}(k,t,\mathbf{p}_{n+1}),
\end{equation}
as desired.
\end{itemize}
\end{proof}
Note that, \textit{mutatis mutandis} the same relations hold for $\overline{F}^*(k,t,n) = {\Pr(X_t \ge k \,\vert\, X_0 = 0)}$, as a consequence of $\overline{F}^*(k,t,n) = \overline{F}(k-1,t,n)$ for $1 \le k \le n$. 

Furthermore, the regular distribution function satisfies an almost identical system of recurrence relations, which we state next. 
\begin{proposition}[CDF: Recurrence Relation]\label{prop:pbp_cdf_recur}
The distribution function of the process state, $X_t$, of any pure birth process induced by the vector of transition probabilities, $\mathbf{p}_{n+1}$, with $0 < p_i < 1$ ($i=0,1,...,n-1$), obeys the recurrence relations
\begin{equation}
F(k,t+1,\mathbf{p}_{n+1}) = 
\begin{dcases}
p_{k}^{} F(k-1,t,\mathbf{p}_{n+1}) + (1 - p_{k}^{}) F(k,t,\mathbf{p}_{n+1}) & \textnormal{if } \;  k < t;  \\
1 																			& \textnormal{if }  \; k \ge t;  
\end{dcases}
\end{equation}
for $1 \le k \le n$ and $t > 0$.
\end{proposition}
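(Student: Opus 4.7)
The plan is to deduce this recurrence almost immediately from \Cref{prop:pbp_ccdf_recur} via the elementary identity $F(k,t,\mathbf{p}_{n+1}) = 1 - \overline{F}(k,t,\mathbf{p}_{n+1})$, which holds pointwise in $k$ and $t$. This avoids redoing any path-counting or transition-probability book-keeping: the preceding proposition has already done the heavy lifting and we just need to translate it through the complement.

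Concretely, for the non-degenerate case I would substitute $\overline{F} = 1 - F$ on both sides of the recurrence of \Cref{prop:pbp_ccdf_recur}. On the right-hand side the two constants assemble into $p_k + (1-p_k) = 1$, which cancels the constant on the left, leaving
\[
F(k,t+1,\mathbf{p}_{n+1}) = p_k^{} F(k-1,t,\mathbf{p}_{n+1}) + (1 - p_k^{}) F(k,t,\mathbf{p}_{n+1}),
\]
as claimed. For the degenerate (saturating) case, note that a pure birth process started at $0$ can, after $t+1$ discrete transitions, occupy at most state $t+1$, so $F(k,t+1,\mathbf{p}_{n+1}) \equiv 1$ whenever $k$ is at least as large as the maximum reachable state. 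This is the natural dual of the zero-boundary $\overline{F}(k,t+1,\mathbf{p}_{n+1}) = 0$ for $k > t$ from the previous proposition, and it closes out the piecewise definition.

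There is no real obstacle; the only thing to watch is the piecewise matching at the boundary, since a careless reading could mis-identify which cell of the CDF table saturates at $1$ versus which is still governed by the recurrence. As a sanity check, an equally short direct derivation conditioning on $X_t$ gives $F(k,t+1,\mathbf{p}_{n+1}) = F(k-1,t,\mathbf{p}_{n+1}) + (1-p_k^{}) f(k,t,\mathbf{p}_{n+1})$ — the chain either was already at some state $\le k-1$ at time $t$, or it sat at state $k$ without a birth — and substituting $f(k,t,\mathbf{p}_{n+1}) = F(k,t,\mathbf{p}_{n+1}) - F(k-1,t,\mathbf{p}_{n+1})$ yields the stated recurrence in a single line of rearrangement, providing independent confirmation of the result obtained via the complement route.
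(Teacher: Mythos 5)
Your primary route---obtaining the relation by substituting $F = 1 - \overline{F}$ into the recurrence of \Cref{prop:pbp_ccdf_recur}---is a genuinely different path from the paper's, which re-derives the recurrence from scratch by combining the state-space identity $f(k,t,\mathbf{p}_{n+1}) = F(k,t,\mathbf{p}_{n+1}) - F(k-1,t,\mathbf{p}_{n+1})$ with the time-step identity $F(k,t+1,\mathbf{p}_{n+1}) = F(k,t,\mathbf{p}_{n+1}) - p_k^{} f(k,t,\mathbf{p}_{n+1})$; your ``sanity check'' is essentially that argument with the conditioning done directly at time $t$. The complement route buys economy (the bookkeeping was already done once for $\overline{F}$), and the algebra $p_k + (1-p_k) = 1$ does go through cleanly on the range where the complementary recurrence applies, namely $k \le t$.

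The genuine gap is at the boundary, precisely where you warned one must be careful---and you did not resolve it. Complementing $\overline{F}(k,t+1,\mathbf{p}_{n+1}) = 0$ for $k > t$, or your ``maximum reachable state'' argument, yields $F(k,t+1,\mathbf{p}_{n+1}) = 1$ only for $k \ge t+1$, whereas the proposition asserts the value $1$ already for $k \ge t$. At $k = t$ the two cases genuinely conflict: the first case of \Cref{prop:pbp_ccdf_recur} covers $k \le t$, and complementing it there gives $F(t,t+1,\mathbf{p}_{n+1}) = 1 - p_0^{} p_1^{} \cdots p_t^{}$, which is strictly less than $1$ whenever $t < n$ under the hypothesis $0 < p_i < 1$ (take $t = k = 1$: $\Pr(X_2 \le 1) = 1 - p_0^{} p_1^{} < 1$). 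So your proof, carried out carefully, establishes a correct statement---the recurrence for $k \le t$ and the constant $1$ for $k > t$---but not the statement as printed; saying the saturation case ``closes out the piecewise definition'' papers over an off-by-one that should instead be flagged and corrected. For what it is worth, the paper's own deferred proof has the same soft spot: it justifies the value $1$ for $k \ge t$ on the grounds that no path has reached any state greater than $t$, which is true of $X_t$ but not of $X_{t+1}$.
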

\begin{proof}
Virtually identical to the proof of \Cref{prop:pbp_ccdf_recur} (nonetheless, provided in Appendix \ref{apx:proofs}).
\end{proof}
We prefer the system of recurrence relations for the complementary c.d.f. (Prop. \ref{prop:pbp_ccdf_recur}) to the one for the c.d.f. (Prop. \ref{prop:pbp_cdf_recur}), because the former is \emph{sparse} owing to its second condition ($k > t$) being equal to zero. 

\paragraph{Matrix implementation.} The recurrence relations bear a striking resemblance to those that the probability mass function obeys (cf. Eq. \eqref{eq:pbp_transitions_recur}). It is only natural then to translate \Cref{prop:pbp_ccdf_recur} to an appropriate matrix form.

A possible direct approach would be to take \Cref{thm:pbp_general_ccdf} as a starting point, and to consider for each state $k$ the truncated chain of \Cref{fig:pbp_markov_chain_truncated} as a way of generating the corresponding value of the complementary distribution function. In that case, one would end up with as many matrices as evaluations of $\overline{F}(\cdot,t,\mathbf{p}_{n+1})$, which is undesirable.

A more economic alternative is to find an implementation of \Cref{thm:pbp_general_ccdf} that accrues all different values of $\overline{F}(\cdot,t,\mathbf{p}_{n+1})$ in a single matrix. This can be achieved \emph{matrix-algebraically} by ``placing the absorbing state up front." The matrix that achieves this, is the following:
\begin{equation}\label{eq:pbp_ccdf_recur_matrixform}
C = \begin{bmatrix}
1 	& 	p_0^{} 		&   				&   			&   			&	    				&					&	  	\\
  	& 	q_0^{}		& 	p_1^{} 			&				& 				&	    				&					&	  	\\
	&				& 	\small\ddots 	& \small\ddots 	&				&						& 					&	  	\\
	&				&					&	q_{k-1}^{}	& 	p_k^{}		&						&	 	    		&     	\\ 
	&				&					&				&   q_k^{}		& 	p_{k+1}^{} 			&	 	    		&     	\\ 		
	&				&					&				&				& 	\small\ddots 		& 	\small\ddots	&  	 	\\ 	
	&				&					&				&				& 		 				&	q_{n-2}^{}				&  p_{n-1}^{}	\\ 
	&				&					&				&				& 		 				&					&  q_{n-1}^{}	\\ 	
\end{bmatrix}.
\end{equation}
Let us make some observations about this matrix. The matrix $C$ is column-stochastic, where $P$ is row-stochastic (row-sums equal one). Mnemonically, the matrix $C$ can be thought of as ``transition matrix $P$ with its main diagonal cyclically shifted one step (to the right)," that is: $\diag(q_0,..., q_{n-1}^{} ,1) \to \diag(1, q_0^{},...,q_{n-1}^{}) $. We record $C$ formally in the following proposition.
\begin{proposition}[Complementary CDF: Matrix form]\label{prop:pbp_ccdf_recur_matrixform}
The complementary distribution function, $\overline{F}^*(j,t,n) = \Pr(X_t \ge j \,\vert\, X_0 = 0, \mathbf{p}_{n+1})$, is given by the $j$\textsuperscript{th} element in the top row of the matrix $C^t$. With $C$ defined by
\begin{equation}
\left[C \right]_{i,j}^{} = \begin{cases}
	1 					& \textnormal{if } 	j=i=0; 		\\
	1-p_j^{} 		   	& \textnormal{if }  j=i\neq 0; 	\\
	p_j^{}	   			& \textnormal{if } 	j=i+1; 		\\
	0 					& \textnormal{otherwise};
\end{cases}
\end{equation}
$0 \le {i, j} \le n$.
\end{proposition}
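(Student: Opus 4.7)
The plan is induction on $t$: I show that the row vector $\mathbf{u}_t^\top \overset{\text{def}}{=} e_0^\top C^t$, where $e_0$ is the first standard basis column vector, satisfies $[\mathbf{u}_t^\top]_j = \overline{F}^*(j, t, \mathbf{p}_{n+1})$ for every $j = 0, 1, \ldots, n$. The base case $t = 0$ is immediate: $C^0 = I$, so $e_0^\top$ has a single $1$ in position $0$ and zeros elsewhere, which matches $\overline{F}^*(0, 0) = \Pr(X_0 \ge 0) = 1$ and $\overline{F}^*(j, 0) = 0$ for $j \ge 1$ (since $X_0 = 0$ almost surely).

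The bridge to \Cref{prop:pbp_ccdf_recur} is the trivial identity $\overline{F}^*(j, t) = \overline{F}(j-1, t)$ for $j \ge 1$. Substituting $k = j - 1$ into that recurrence yields
\[
\overline{F}^*(j, t+1) = p_{j-1} \overline{F}^*(j-1, t) + q_{j-1} \overline{F}^*(j, t), \quad 1 \le j \le t+1,
\]
together with $\overline{F}^*(0, t+1) = 1$ and $\overline{F}^*(j, t+1) = 0$ for $j > t+1$. For the inductive step I would compute $\mathbf{u}_{t+1}^\top = \mathbf{u}_t^\top C$ column by column. Reading off the columns of $C$ from the matrix display in \eqref{eq:pbp_ccdf_recur_matrixform}, column $j$ (for $j \ge 1$) is supported only at rows $j-1$ and $j$ with values $p_{j-1}$ and $q_{j-1}$ respectively, while column $0$ contains only $[C]_{0,0} = 1$. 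Therefore
\[
[\mathbf{u}_{t+1}^\top]_0 = [\mathbf{u}_t^\top]_0 \quad \text{and} \quad [\mathbf{u}_{t+1}^\top]_j = p_{j-1}[\mathbf{u}_t^\top]_{j-1} + q_{j-1}[\mathbf{u}_t^\top]_j \quad (j \ge 1),
\]
which is exactly the translated recurrence when combined with the inductive hypothesis.

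The only points that require care are the boundary conditions. The trivial value $\overline{F}^*(0, t) = 1$ is produced by the entry $[C]_{0,0} = 1$ (a pseudo-absorbing ``always-true'' state at index $0$), keeping the leading coordinate pinned at $1$ for all $t$. The far boundary $\overline{F}^*(j, t) = 0$ for $j > t$ is preserved automatically by the bidiagonal sparsity of $C$: if $[\mathbf{u}_t^\top]_{j-1} = [\mathbf{u}_t^\top]_j = 0$ then $[\mathbf{u}_{t+1}^\top]_j = 0$ as well. The intermediate ``frontier'' case $j = t+1$ collapses to $[\mathbf{u}_{t+1}^\top]_{t+1} = p_t [\mathbf{u}_t^\top]_t$, which correctly recovers $\prod_{i=0}^{t} p_i = \Pr(X_{t+1} = t+1)$. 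The main obstacle is purely notational: the index shift $\overline{F}^* \leftrightarrow \overline{F}$ is precisely what forces $C$'s diagonal to be the cyclic shift $\diag(1, q_0, \ldots, q_{n-1})$ of $P$'s diagonal $\diag(q_0, \ldots, q_{n-1}, 1)$, and once that dictionary is in place the proof reduces to the one-line observation that $e_0^\top C^t$ satisfies the same recurrence and the same initial data as the CCDF vector $(\overline{F}^*(j, t))_{j=0}^n$.
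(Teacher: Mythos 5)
Your proof is correct and takes essentially the same route as the paper, which likewise reduces the claim to writing out $[C^{t+1}]_{0,k} = [C^{t}]_{0,:}[C]_{:,k}$ and matching it against \Cref{prop:pbp_ccdf_recur} via the shift $\overline{F}^*(j,t,\mathbf{p}_{n+1}) = \overline{F}(j-1,t,\mathbf{p}_{n+1})$; you merely make the induction on $t$ and the boundary checks explicit. One small point in your favour: you read the column supports off the displayed matrix \eqref{eq:pbp_ccdf_recur_matrixform} (superdiagonal entry $p_{j-1}$, diagonal entry $q_{j-1}$ in column $j$), which is the indexing the recurrence actually requires, whereas the case-by-case formula in the proposition statement is off by one there.
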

\begin{proof} This is a direct matrix translation of \Cref{prop:pbp_ccdf_recur} and the remark immediately following its proof. To see that the linear recurrence relation defined by matrix $C$ coincides with the recurrence relation of \Cref{prop:pbp_ccdf_recur}, simply write out $[C^{t+1}]_{0,k}^{} = [C^{t}]_{0,:}^{} [C]_{:,k}^{}$. 
\end{proof}
If one is only interested in the complementary distribution of the first $k$ states, then $C$ may be constrained to its top-left $(k+1) \times (k+1)$ submatrix. Finally, note that exponentiation of $C$ has the exact same computational complexity as exponentiation of $P$ by a power $t$.

\newpage
\subsection{Simulation}\label{sec:pbp_simulation}
We conclude with a brief exposition on the simulation of a generic pure birth process. 

Though naive sequential approaches are apparent, there exists a more efficient manner based on the representation of first hitting times as the sum of independent geometric random variables (cf. Eq. \eqref{eq:pbp_first_hit_time_sum_geoms}). 
 
In exploiting this latter representation, we are greatly aided by the simple way in which a geometric random variable can be simulated. Let $U$ denote a uniform-randomly distributed random variable on the interval $(0,1)$, then
\begin{equation}\label{eq:generate_geom_rv}
G_{p^{}} \gets \left\lceil \dfrac{\log(U)}{\log(1-p^{})} \right\rceil
\end{equation}
is geometrically distributed with success probability $p$ \citep{devroye1986rvgeneration}. Here $\lceil x \rceil$ denotes the ceiling operation, which rounds the numeric argument $x$ up to the smallest integer greater than or equal to $x$.

To simulate $X_t$, the idea is to successively generate new waiting times (including the transition), until the cumulative sum of simulated waiting times exceeds the given (``allotted") time $t$. Now, say that this exceedance happens while the process is in state $k$, then the realization of the simulated process will be $x_t = k$. We make this precise in Algorithm \ref{algo:sim_pbp_stage_process}.

\begin{algorithm}[H]\label{algo:sim_pbp_stage_process}
\linespread{1.2}\selectfont
\SetAlgoLined
\SetKwInOut{Input}{input}
\SetKwInOut{Output}{output}
\Input{$\mathbf{p}_{n+1}, t > 0$}
\Output{state $i \in \{0,1,2...,n\}$}
 $i \gets 0$\;
 \While{$t > 0$ \textnormal{and} $i < n$}{
   	$U \gets \text{Unif}(0,1)$\;
  	$G \gets \ceil \left( \log(U) / \log(1-\mathbf{p}_{n+1}(i)) \right)$\;
   	$t \gets t - G$\;
   	\If{$t \ge 0$}{
      	$i \gets i + 1$\;
    }  	
 }
 \caption{Simulate Pure Birth Process $X_t$.}
\end{algorithm}

Using this approach one needs to generate only $\min(n, t)$ standard uniformly distributed random variables in the worst case. 

\clearpage
\section{Chapter Summary}
In \Cref{chpt:pbp_in_disc_time} we studied the probability distribution of the general discrete-time pure birth process on its state space. We defined the problem formally as a Markov chain in \Cref{sec:pbp_definition}. The crucial duality with first hitting times was discussed in \Cref{sec:pbp_first_hit_times}. Importantly, each state's first hitting time can be represented as a convolution of geometric distributions. 

This convolution of geometric distributions was studied by \cite{sen1999convolution_geoms}, who in that context derived the exact probability mass function for the case having distinct success (transition) probabilities. This result was generalized to generic success probabilities by \citet[Sec. 3]{chen2016conv_negbinom}, thereby removing the distinctness condition.

Independently and using new methods, we derived the exact probability distribution of the generic discrete-time pure birth process in \Cref{sec:pbp_results_excact_distri}. Our derivation is based on the embedding Markov chain of \Cref{fig:pbp_markov_chain} combined with a weighted path counting argument that results in the complete homogeneous symmetric polynomial. 

We then extended \cite{chen2016conv_negbinom}'s result to the cumulative distribution function for both generic and distinct transition probabilities in \Cref{thm:pbp_general_pmf}, \Cref{thm:pbp_general_ccdf}, and \Cref{prop:pbp_cdf_distinct}, respectively. For the probability mass function, we provided a one-line proof in \Cref{prop:pbp_pmf_distinct} by direct application of \href{lem:sylvester_identity}{Sylvester's identity}.

Finally, and practically perhaps most satisfying, the simple structure of the pure birth Markov chain lead us to the discovery and proof of a sparse bidiagonal system of recurrence relations which generate the complementary distribution function and make its precise and efficient calculation feasible in large systems: cf. \Cref{prop:pbp_ccdf_recur} and \Cref{prop:pbp_ccdf_recur_matrixform}.\footnote{Numerical experimentation in Matlab shows that, for a randomly chosen transition probability vector, $\mathbf{p}_{n+1}$, with $n = 50{,}000$ and $t = 55{,}000$, a direct implementation of \Cref{prop:pbp_ccdf_recur} takes less than a few seconds to compute.}

\paragraph{Outlook for the remainder.} In the remainder of this thesis we give a detailed discussion of some special cases that follow from specification of the transition probabilities, $\mathbf{p}_{n+1}$. In short, we discuss: classical occupancy $p_k = \frac{n-k}{n}$ (\Cref{chpt:special_case}), randomized occupancy $p_k = p \frac{n-k}{n}$ (\Cref{sec:rand_occ_model}), and the binomial distribution $p_k \equiv p$ (\nameref{chpt:apx} \ref{apx:other_variations}). 

The specific stipulation of the transition probabilities in these cases enables further simplifications and more specific conclusions, including compact expressions involving $r$-Stirling numbers of the second kind and finite differences.

Of course, all results for these special cases can be directly obtained by application of the theorems and propositions of this chapter and then simplifying the resulting expressions. 

However, \Cref{chpt:special_case} approaches the classical occupancy problem from a different angle. Instead of employing the complete homogeneous symmetric polynomial as its main tool, it starts from the eigendecomposition of the transition matrix. 

In sum, in the remainder we aim to illustrate the wider applicability of the state distribution of discrete-time pure birth processes.

\chapter[Special Case: Classical Occupancy]{The Classical Occupancy Problem}\label{chpt:special_case}
\epigraph{\flushright{\emph{``Certainly, let us learn proving, but also \emph{let us learn guessing}."}}}{G. P\'{o}lya}

Sampling is at the basis of statistics: most, if not all, of statistics can be viewed as asking if, and to which degree, statements about the sample can be generalized to the entire population.

Sampling without replacement is perhaps the most well known sampling procedure. Though the term is imprecise, we mean that each $k$-subset has equiprobability, $\sfrac{1}{\binom{n}{k}}$, of being selected in this process. We consider this ``fair" as there is no preference for any subset or any one population element.

A common implementation is a sequential one, that is: select with equiprobability any one element from the population into the sample, then remove the selected item from the population permanently before the next stage. Select again with equiprobability from the reduced population and remove the selected element, and so on until $k$ elements have been selected into the sample (and $n-k$ remain in the reduced population). This process could, for example, yield the sample $11,2,5,7,3,1$. In statistics we generally do not care about the order in which the elements were sampled, and we simply forget the order. It is inherent to this method of sampling that any order of selected elements is equally likely, and therefore we denote the sample as a set: $\{1,2,3,5,7,11\}$.

Sampling without replacement is conceptually simple and well understood. The closely related sampling with replacement is conceptually just as simple, but its probabilistic properties are quite more involved. This makes it an interesting topic for study.

We shall therefore be concerned with sampling \emph{with replacement}. This name is not precise, and we should speak about \emph{sequential} sampling with replacement. That is, at each stage all elements in the population have equiprobability of being selected. Say, we denote the population by $[n]=\{1,2,3,...,n\}$, then at any sampling stage each population element has probability $\sfrac{1}{n}$ of being selected into the sample. This procedure is \emph{sequential} by construction and naturally produces a tuple of selected elements. Contrary to sampling without replacement, this time repetitions of elements can, and are likely to, occur in the sample.

A natural property of sequential sampling with replacement is that it is equally probable to obtain the tuple $(11,2,2,3,3,1)$ as it is to obtain $(2,1,3,2,3,11)$, or any other permutation of these elements.

Forgetting the order of the elements in a tuple, but maintaining multiplicities, results in a multiset. In this example the multiset equals $\{\!\!\{1,2,2,3,3,11\}\!\!\}$. 

Though invariance of probability under permutation of the tuple's elements is a desirable property, sampling with replacement gives us a little bit more. A tuple such as $(1,1,1,1,1,2)$ is equally likely to be sampled as, indeed, \emph{any} tuple in the Cartesian product $[n]^t$ (with $t=6$ here). Comparison of these tuples makes it immediately clear that sequential random sampling with replacement does not result in a uniformly random selection from the universe of multisets.\footnote{Note: this discrepancy between uniformity over the universes of multisets vs. the universe of tuples, bears great resemblance to the difference between Maxwell-Boltzmann and Bose-Einstein statistics \citep[cf.][]{feller1968ed3}.}  

And although, we generally do not care about the order of elements in a sampled tuple, in statistics we do care about their multiplicities: \emph{$k$ distinct population elements, in general, contain more information about the population than $k$ identical elements.} 

This realization leads us directly to the question which probability distribution sequential sampling with replacement induces on the number of distinct elements sampled in this manner. 

Answering this question is our main task in this chapter. A precise answer gives us a better understanding of sampling with replacement in general and of this property in particular.  

The distribution of the number of distinct elements in a tuple resulting from random sampling with replacement has been obtained in different contexts, the oldest known occurrence being \cite{demoivre1711mensura} \citep[see also][]{todhunter1865history}. In the last century it received the name ``classical occupancy distribution" which is derived from its multi-urn model formulation (\citet{feller1968ed3, kotz_johnson1977urn_models}; seq. Sec. \ref{sec:cocc_problem_definition}, Formulation~\ref{def:classic_occ_urn_model}). 

In our opinion, the profound importance of the classical occupancy distribution is illustrated best by its statistical and combinatorial interpretations which we formulate precisely in Formulations~\ref{def:classic_occ_swr} and \ref{def:classic_occ_combinatorial} in \Cref{sec:cocc_problem_definition}.

\paragraph{Positioning.}
Sampling with replacement is a sequential process by construction. \textit{A fortiori}, this process is a discrete-time Markov process. 

To see this, let the number of distinct elements already sampled from a finite population, $[n]$, be denoted by $k$; then the probability of sampling a yet unseen element equals $p_k = \frac{n-k}{n}$, as there are $k$ already sampled elements and $n-k$ unsampled ones in the population at that stage. Hence, the $p_k$ can be viewed as transition probabilities of this process, and $q_k = \frac{k}{n}$ the corresponding stop probabilities of each state. As such, the continuation of this process \emph{depends only} on the current state $k$ (Markov property). 

Some authors noted the Markovian nature of the classical occupancy problem \citep[e.g.][]{renyi1962three_proofs,feller1968ed3}. However, to the best of our knowledge, \cite{uppuluri1971occupancy} are the only reference taking this Markovian perspective as a starting point for their investigation. We believe that this vantage point can be exploited further, as we hope to illustrate with this thesis. 

As the embedding Markov chain of the classical occupancy problem is a discrete-time pure birth chain with distinct transition probabilities, the results that we present in this chapter are all special cases of the results presented in \Cref{chpt:pbp_in_disc_time}, in particular of Propositions \ref{prop:pbp_pmf_distinct} and \ref{prop:pbp_cdf_distinct}, and Propositions \ref{prop:pbp_ccdf_recur} and \ref{prop:pbp_ccdf_recur_matrixform}. 

However, we shall derive our results here from a different point of departure, namely from the eigendecomposition of the transition matrix. This has the benefit of validating our earlier approach and providing the reader with a matrix-algebraic alternative to the previous combinatorial one.
 
\paragraph{Outline and Results.}
We begin by summarizing some equivalent problem formulations in \Cref{sec:cocc_problem_definition}. We define the embedding Markov chain formally in \Cref{sec:cocc_markov_chain}. The workhorse of this chapter is the eigendecomposition of the transition matrix of the embedding Markov chain. We derive it in \Cref{sec:eigendecomposition}. 

The eigendecomposition is put to use in \Cref{sec:cocc_results}, to first derive the classic expressions for the probability mass function and distribution function. And then, to generalize these results to their initial-state conditioned versions in \Cref{sec:other_trans_probs} and \Cref{sec:cocc_distri_func}. 

In \Cref{sec:cocc_recur_distri} the recurrence relations for complementary c.d.f. are specifically adapted for the classical occupancy distribution in \Cref{prop:ccdf_recur}, and the corresponding matrix implementation is provided in \Cref{prop:ccdf_recur_matrixform}. 

In \Cref{sec:cocc_expectation_variance} we discuss the moments of the classical occupancy distribution and provide a clean derivation of its expectation and variance based on \cite{price1946multinomial}.

We conclude with a brief treatment of \cite{harkness1969occupancy}'s slightly more general \emph{randomized} occupancy model. We answer affirmatively a question posed by its author on whether the expressions in the exponentiated transition matrix can be simplified.

\newpage
\section{Problem Definition and Formulations}\label{sec:cocc_problem_definition}
In this section we give a concise overview of some of the formulations by which the classical occupancy problem may be defined.

The classical occupancy problem traces its origins to \citet[Problems 18 and 19]{demoivre1711mensura}. It is only fitting then to provide DeMoivre's formulation here first -- not in the least place for its conceptual clarity.

\begin{definition}[DeMoivre's Die]\label{def:classic_occ_demoivre_die}
\textnormal{What is the probability of observing $k$ different faces of an $n$-faced die, when it is rolled $t$ times?}
\end{definition}
It is assumed that each face of the die has equiprobability of coming up, and further that each roll of the die is independent of the others.

A variant of this formulation, which gives the problem a central place in statistics, is the one that phrases it in terms of sampling with replacement.
\begin{definition}[Sampling with Replacement]\label{def:classic_occ_swr}
\textnormal{Given a finite population of size $n$ from which $t$ random draws with replacement are made, what is the probability of observing $k$ distinct population elements?}
\end{definition}

The problem can also be conceptualized as an urn model, in the following manner.

\begin{definition}[Single Urn Model]
\textnormal{Initially an urn contains $n$ white balls. When a ball is randomly drawn and it is white, a black ball is returned into the urn. When a black ball is randomly drawn, we return it to the urn. What is the probability of the urn containing $k$ black balls after $t$ draws?}
\end{definition}

The problem gets its current name from the following phrasing as a multi-urn model \citep{feller1968ed3,kotz_johnson1977urn_models}.
\begin{definition}[Multiple Urn Model]\label{def:classic_occ_urn_model}
\textnormal{Given $n$ distinguishable urns of unbounded capacity, and $t$ throws with distinguishable balls having equiprobability of ending up in any of the $n$ urns, what is the probability of finding $k$ urns occupied?}
\end{definition}

These formulations describe a sequential process, where at each stage a new, not yet seen, element is added to the current sample with evolving probability that depends only on the number of distinct elements already in the sample.

Finally, let $[n]$ denote the set $\{1,2,...,n\}$. And let $[n]^t$ denote the $t$-fold Cartesian product of $[n]$.\footnote{Note: $A^t \overset{\small\text{def}}{=} \{(a_1,a_2,...,a_t) \,\vert\, a_1,a_2,..., a_t \in A\}$.} Then the following is an enumerative combinatorial definition.

\begin{definition}[Enumerative Combinatorial]\label{def:classic_occ_combinatorial}
\textnormal{What is the number of tuples in $[n]^t$ that contain exactly $k$ distinct elements?}
\end{definition}
Under uniform random sampling of the tuples in $[n]^t$, and defining the favorable cases as ``tuple contains $k$ distinct elements", we obtain the classical occupancy distribution as the fraction of favorable cases to total cases.

\section{Discrete-Time Markov Chain}\label{sec:cocc_markov_chain}
The perspective taken in this thesis is that the number of distinct elements in sequential sampling with replacement defines a discrete Markov process.  Formally, let $X_t$ denote the number of distinct elements sampled from a finite population, $[n] = \{1,2, ..., n\}$, in $t \ge 0$ draws. The state space is therefore $S=\{0,1,2,...,n\}$.

To see this problem as a Markov chain we consider the following. Let us assume that the process is in state $k$ at some time $t$, i.e.  $X_t = k$ ($k \in S \setminus \{n\}$ and $t \ge 0$). The probability of sampling a yet unsampled element equals $\frac{n-k}{n}$, as there are $n-k$ unsampled elements at that stage and the sampling is uniformly random from $[n]$ at each stage. And likewise, the probability of sampling a member of $[n]$ which was already sampled equals $\frac{k}{n}$. Finally, note that in state $k=0$ this probability is $1$, and for $k=n$ this probability is $0$. 

These simple observations allow us to define a discrete-time Markov process, as the transition probabilities only depend on the current state of the sample and not on its past. 

\begin{figure}[htb]
\centering
\begin{tikzpicture}[node distance = 2.1cm,  auto] 

\node[state] 						(s1) 	{0};
\node[state, right of=s1] 			(s2) 	{1};
\node[state, right of=s2] 			(s3) 	{2};
\node[draw = none, right of = s3] 	(s4_) 	{$\cdots$};
\node[state, right of=s4_] 			(s5) 	{$k$};
\node[state, right of=s5] 			(s6) 	{$k{+}1$};
\node[draw = none, right of = s6] 	(s7_) 	{$\cdots$}; 
\node[state, thick, right of=s7_] 			(s8) 	{$n$};

\draw (s2) 	edge[loop below] node {$\frac{1}{n}$} 	(s2);
\draw (s3) 	edge[loop below] node {$\frac{2}{n}$}	(s3);
\draw (s5)	edge[loop below] node {$\frac{k}{n}$}	(s5); 
\draw (s6) 	edge[loop below] node {$\frac{k+1}{n}$}	(s6); 
\draw (s8) 	edge[loop below, thick] node {$1$}				(s8);

\draw[->]  (s1) 	to node [midway, above] { $1$ } 	  			(s2); 
\draw[->]  (s2) 	to node [midway, above] { $\frac{n-1}{n}$ } 	(s3);
\draw[->]  (s3) 	to node 				{ $\frac{n-2}{n}$ } 	(s4_);
\draw[->]  (s4_) 	to node [midway, above] { $\frac{n-k+1}{n}$ }  	(s5);
\draw[->]  (s5) 	to node  				{ $\frac{n-k}{n}$ }    	(s6);  
\draw[->]  (s6) 	to node [midway, above] { $\frac{n-k-1}{n}$ }  	(s7_); 
\draw[->]  (s7_)	to node [midway, above] { $\frac{1}{n}$ }  		(s8);

\end{tikzpicture}
\caption{Transition diagram of the classical occupancy chain over state space $S = \{0,1,2,...,n\}$.}
\label{fig:classic_occpancy_markov_chain}
\end{figure}

The Markov chain in \Cref{fig:classic_occpancy_markov_chain} has the corresponding (one-step) transition matrix:
\begin{equation}\label{eq:matrix:1-step-transition}
\begin{aligned}
P &=  \frac{1}{n} 
\begin{bmatrix*}[c]
0	&	n 	&  		&                 	&  	 				&   	        	& 					&  						& 	 	\\
	&	1 	& n-1	&                	&  	 				&   	        	& 					&  						& 	 	\\	
	&  		&  2	& n-2              	&  	 				&   		    	& 					& 						&  		\\
	&  		&		&    \small\ddots  	&     \small\ddots 	&   				&  					&  						&  		\\
	&  		&	 	& 				   	&		 k-1		&		n-k+1 		&					&  						&  		\\
	&  		&    	&					&                  	&     k     	 	&    		  n-k	&   					&  		\\
	&  		&	 	& 					&					&					& \small\ddots  	& 		\small\ddots	&    	\\
	&  		&		&      				&					&					&					&	n-1    	            & 1		\\
	&  		&		&      		  		&					&					&					&	    		 		& n	
\end{bmatrix*}.
\end{aligned}
\end{equation}
Formally, we define the transition matrix by 
\begin{equation}\label{eq:transition_probs}
\left[ P \right]_{i,j}^{} =   
\begin{dcases} 
\frac{i}{n}     & \mbox{if }\, j = i; 	\\ 
\frac{n-i}{n}   & \mbox{if }\, j = i + 1;	\\
0    	        & \mbox{otherwise};
\end{dcases}
\end{equation}
for all $0 \leq i,j \leq n$. The notation $[A]_{i,j}^{}$ denotes the element on the $i$th row and $j$th column of $A$. Furthermore, $\left[ A \right]_{i,:}$ and $\left[ A \right]_{:,j}$ denote the entire $i$\textsuperscript{th} row and $j$\textsuperscript{th} column of $A$, respectively. We let $p_{i,j}^{(t)}$ denote the element in the $i$th row and $j$th column of $P^t$, i.e. $[P^t]_{i,j}$.

The following system of recurrence relations is an equivalent representation of this Markov chain:
\begin{equation}\label{eq:pmf_recur}
p_{i,j}^{(t+1)} = 
\begin{dcases} 
0^{t+1} 						& \mbox{if } i=j=0;  		\\
\frac{j}{n}p_{i,j}^{(t)}  \,+\, \frac{n-j+1}{n}p_{i,j-1}^{(t)} 	& \mbox{if }\;  0 < i \le j \le n \text{ and } j < t; 	\\ 
0 							    & \mbox{otherwise}; 		\\ 
\end{dcases}
\end{equation}
for all $i,j \in S = \{0, 1, ..., n\}$ and $t > 0$; and initially $p_{i,i}^{(0)} = 1$, $i \in S$. 

Finally, we note that the process defined by $Y_t = n - X_t$ is a Markov process as well. $Y_t$ equals the number of unsampled elements from $[n]$ at time $t$. This process starts in $n$, i.e. $Y_0 = n$, and then transitions to $Y_1 = n-1$, et cetera. In essence it is the same chain as the one described by Eqs.  \eqref{eq:matrix:1-step-transition}, \eqref{eq:transition_probs}, and \eqref{eq:pmf_recur} with relabeled states.

\subsection{Eigendecomposition}\label{sec:eigendecomposition} 
In this section we derive the eigendecomposition of the transition matrix, $P$. This allows us to obtain powers of $P$ from its diagonalization, $P^{t} = U\Lambda^{t} U^{-1}$, where $\Lambda$ is a diagonal matrix of eigenvalues and $U$ contains $P$'s right-eigenvectors as its columns. The topmost row of this matrix yields the classical occupancy distribution. The other rows provide the initial-state conditioned probabilities of the classical occupancy distribution. 

The eigenvalues of $P$ are simply the elements on its diagonal:
\begin{equation}\label{eq:transition_mat_eigenvals} 
\lambda_j = \frac{j}{n}, \quad 0 \le j \le n.
\end{equation}
We collect these in the diagonal matrix $\Lambda \overset{\text{def}}{=} \diag(\lambda_0, \lambda_1, ..., \lambda_n) = n^{-1} \diag(0,1,...,n)$. 

Corresponding integer-valued eigenvectors were suggested by numerical experimentation. The following lemma makes this precise and provides a proof of this ``educated guess".

\begin{lemma}\label{lem:eigenvector_mat_U}
Integer-valued, right-eigenvectors of $P$ are given by the columns of the $(n+1) \times (n+1)$ matrix $U$, defined by
\begin{equation}
\left[U\right]_{{i,k}} = 
\begin{cases}
\binom{n-i}{n-k} & \textnormal{if }\, i \le k; \\
0                & \textnormal{otherwise}.
\end{cases}
\end{equation}
with $0 \le i,k \le n$.
\end{lemma}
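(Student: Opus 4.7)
The plan is to verify $P u_k = \lambda_k u_k$ columnwise, where $u_k \overset{\text{def}}{=} [U]_{:,k}$ and $\lambda_k = k/n$. Because $P$ is upper bidiagonal with $[P]_{i,i} = i/n$ and $[P]_{i,i+1} = (n-i)/n$ (cf. Eq. \eqref{eq:transition_probs}), the $i$th entry of $Pu_k$ is the two-term expression
\begin{equation*}
(Pu_k)_i = \frac{i}{n}\,[U]_{i,k} + \frac{n-i}{n}\,[U]_{i+1,k},
\end{equation*}
so the eigenvalue equation reduces, for each $0 \le i,k \le n$, to checking the scalar identity $i\,[U]_{i,k} + (n-i)\,[U]_{i+1,k} = k\,[U]_{i,k}$.

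I would split this into three cases based on the triangular support of $U$. First, when $i > k$ both $[U]_{i,k}$ and $[U]_{i+1,k}$ vanish and the identity is trivial. Second, when $i = k$ we have $[U]_{k,k} = \binom{n-k}{n-k} = 1$ and $[U]_{k+1,k} = 0$, so the left side equals $k/n$ and matches $\lambda_k \cdot 1$ on the right. The only substantive case is $i < k$, where I would substitute the definition $[U]_{i,k} = \binom{n-i}{n-k}$ and rewrite the required equality (after clearing the factor $1/n$) as
\begin{equation*}
(n-i)\binom{n-i-1}{n-k} = (k-i)\binom{n-i}{n-k}.
\end{equation*}

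Using $\binom{n-i}{n-k} = \binom{n-i}{k-i}$ and $\binom{n-i-1}{n-k} = \binom{n-i-1}{k-i-1}$, and setting $a = n-i$ and $b = k-i$, this is exactly the elementary absorption identity $a\binom{a-1}{b-1} = b\binom{a}{b}$, valid for $a \ge b \ge 1$, which is the regime relevant here ($b \ge 1$ because $i < k$, and $a \ge b$ because $k \le n$). Collecting the three cases completes the verification that each column of $U$ is a right-eigenvector of $P$ with eigenvalue $k/n$.

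I do not anticipate any real obstacle: the main work is bookkeeping the two edge cases of the bidiagonal product against the triangular support of $U$, and recognising that the interior case collapses to a single standard binomial identity. If desired, one can additionally remark that $U$ is upper triangular with unit diagonal, hence invertible, so these $n+1$ eigenvectors form a basis and $P = U\Lambda U^{-1}$ is a genuine eigendecomposition — but that observation is not required for the statement of the lemma as phrased.
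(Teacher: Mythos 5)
Your proposal is correct and follows essentially the same route as the paper: a direct entrywise verification of $P\mathbf{u}_k = \lambda_k \mathbf{u}_k$ that reduces to the identity $i\binom{n-i}{n-k} + (n-i)\binom{n-i-1}{n-k} = k\binom{n-i}{n-k}$, which the paper checks by expanding factorials and you dispatch via the absorption identity $a\binom{a-1}{b-1} = b\binom{a}{b}$. The only (cosmetic) difference is that your case analysis handles the column $k=0$ uniformly, whereas the paper treats that eigenvector separately by Gaussian elimination.
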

\begin{proof}
Denote the matrix $P = [\mathbf{p}_0, \mathbf{p}_1, ..., \mathbf{p}_n]$, likewise denote $U = [\mathbf{u}_0, \mathbf{u}_1, ..., \mathbf{u}_n]$.

We must verify that $P \mathbf{u}_{k} = \lambda_k \mathbf{u}_{k}$ for all $0 \le k \le n$. We separate two cases: column $k=0$ and the remaining columns $1 \le k \le n$.

\begin{itemize}
\item \emph{The case $k = 0$}. The eigenspace belonging to eigenvalue $\lambda_0 = 0$, follows from solving the system
\begin{equation}
P \mathbf{u}_0 = \lambda_0 \mathbf{u}_0 = \mathbf{0}.
\end{equation}
We denote this system by $ [P \,\vert\, \mathbf{0}]$. Because $P$ is bidiagonal, this system is straightforwardly solved by Gaussian elimination (starting from the bottom right and working upward). The reduced row-echelon form equals
\begin{equation}
\left[\begin{array}{ll|r}
1 			& \mathbf{0}^\top 	& 0 \\
\mathbf{0} 	& O_{n \times n}    & \mathbf{0}
\end{array}\right].
\end{equation}
This system has infinitely many solutions of the form $s [1, 0, 0, ..., 0]^\top$ with $s \in \mathbb{R}$. Hence, $\mathbf{u}_0 = [1, 0, 0, ..., 0]^\top = [\binom{n}{0}, 0, 0, ..., 0]^\top$ spans the eigenspace belonging to eigenvalue $\lambda_0 = 0$.

\item \emph{The columns $1 \le k \le n$}. For the remaining eigenvalue-eigenvector pairs, we proceed with a direct demonstrative computation. Again, we must verify that 
\begin{equation}\label{eq:eigendecomp_PU_is_ULamba}
P U = U \Lambda,
\end{equation}
with $U$ as supposed and $\Lambda = n^{-1}\diag(0,1,...,n)$ the diagonal matrix of ordered eigenvalues (that is Eq. \eqref{eq:transition_mat_eigenvals}). 

We reduce both sides of \eqref{eq:eigendecomp_PU_is_ULamba} to equivalent simpler statements. Starting with the left-hand side, we may re-express it explicitly for each column $k$  ($0 \le k \le n$), as: $[P U]_{:,k} = P \mathbf{u}_k = $
\begin{equation}
\begin{aligned}
\frac{1}{n} \left[u_{0,k}^{}, u_{0,k}^{} + (n{-}1) u_{1,k}^{}, ...,\, i u_{i,k}^{} + (n{-}i) u_{i+1,k}^{},\, ... ,(k{-}1) u_{k-1,k}^{} + (n{-}k{+}1)u_{k,k}, k u_{k,k}^{}, \mathbf{0}_{n-k}^\top \right]^\top .
\end{aligned}
\end{equation}

The right-hand side of \eqref{eq:eigendecomp_PU_is_ULamba} can be written in column-partition form as:
\begin{equation}
U \Lambda = \frac{1}{n} \left[\mathbf{u}_0, \mathbf{u}_1, 2 \mathbf{u}_2,\, ...,\, k \mathbf{u}_k,\, ....,\, (n{-}1) \mathbf{u}_{n-1}, n \mathbf{u}_n \right].
\end{equation}

Now, consider any eigenvector $\mathbf{u}_k$, $1 \le k  \le n $. Comparing the elements on the left- and right-hand sides of \eqref{eq:eigendecomp_PU_is_ULamba}, the demonstrandum is reduced to proving that     
\begin{equation}
\frac{i}{n}\binom{n-i}{n-k} + \frac{(n-i)}{n} \binom{n-i-1}{n-k} = \frac{k}{n} \binom{n-i}{n-k}
\end{equation}
for $i = 0,1,...,k$ given any $k = 1,2,...,n$. Verifying this relation now becomes an algebraic exercise. Starting from the left-hand side, we deduce 
\begin{equation}
\begin{aligned}
i \frac{(n-i)!}{(n-k)!(k-i)!} + (n-i)  \frac{(n-i-1)!}{(n-k)!(k-i-1)!} &= i \frac{(n-i)!}{(n-k)!(k-i)!} +\frac{(n-i)!}{(n-k)!(k-i-1)!} \\
&= i \binom{n-i}{n-k}  +  (k-i) \binom{n-i}{n-k}  \\
&= k \binom{n-i}{n-k},
\end{aligned}
\end{equation}
as desired, completing the proof.
\end{itemize}

\end{proof}
A concrete example of $U$ is provided in Appendix \ref{apx:matrix_examples}. One recognizes $U$ as a Pascal matrix \citep{velleman1993pascal} oriented with ones on its diagonal and ones in its rightmost column. $U$'s inverse, $U^{-1}$, is identical apart from signs.
\begin{lemma}\label{lem:eigenvector_mat_U_inverse}
The integer-valued inverse of the right-eigenvector matrix $U$ of $P$ (\Cref{lem:eigenvector_mat_U}) is given by
\begin{equation}
\left[U^{-1}\right]_{{i,j}} = 
\begin{cases}
(-1)^{j-i}\binom{n-i}{n-j} & \textnormal{if }\, i \le j; \\
0                          & \textnormal{otherwise}.
\end{cases}
\end{equation}
\end{lemma}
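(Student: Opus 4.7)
The plan is to verify directly that $U U^{-1} = I$ by computing the $(i,j)$ entry of the product and checking it equals $\delta_{ij}$. Since both $U$ (from \Cref{lem:eigenvector_mat_U}) and the candidate $U^{-1}$ are upper triangular, the sum defining $[U U^{-1}]_{i,j}$ is supported on indices $k$ with $i \le k \le j$, so the product is automatically upper triangular. Concretely,
\begin{equation*}
[U U^{-1}]_{i,j} = \sum_{k=i}^{j} \binom{n-i}{n-k} (-1)^{j-k} \binom{n-k}{n-j}.
\end{equation*}

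The case $i > j$ gives an empty sum and hence $0$, as required. The case $i = j$ reduces to the single term $\binom{n-i}{n-i}\binom{n-i}{n-i} = 1$. The interesting case is $i < j$. Here I would change variables to $a = n-i$, $b = n-j$, $\ell = n-k$ (so $a > b \ge 0$ and $\ell$ runs from $b$ to $a$), turning the sum into
\begin{equation*}
\sum_{\ell = b}^{a} (-1)^{\ell - b} \binom{a}{\ell} \binom{\ell}{b}.
\end{equation*}

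The main (and only) real step is then to apply the trinomial revision identity $\binom{a}{\ell}\binom{\ell}{b} = \binom{a}{b}\binom{a-b}{\ell-b}$, pull the constant $\binom{a}{b}$ outside, reindex by $m = \ell - b$, and invoke the binomial theorem to recognize the remaining alternating sum as $(1-1)^{a-b} = 0$ because $a > b$. This finishes the off-diagonal case and establishes $U U^{-1} = I$, which is enough since $U$ is square. The hardest part is spotting the trinomial revision identity; everything else is bookkeeping of indices and triangularity, so I would present the computation compactly in the three cases $i > j$, $i = j$, $i < j$ without further fanfare.
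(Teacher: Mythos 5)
Your proposal is correct and takes essentially the same approach as the paper, which simply states that the result ``follows from direct multiplication with matrix $U$'' and defers the details to references on Pascal matrices. You have filled in exactly those details (the trinomial revision identity plus the alternating binomial sum), and the computation checks out in all three cases.
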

\begin{proof}
This result follows from direct multiplication with matrix $U$ from \Cref{lem:eigenvector_mat_U}. Additionally, we refer to \citet{velleman1993pascal} or \citet{spivey2008symmetric}.
\end{proof}

Combined, the eigenvalues in Eq. \eqref{eq:transition_mat_eigenvals} and the eigenvector-matrices in Lemmas \ref{lem:eigenvector_mat_U} and \ref{lem:eigenvector_mat_U_inverse} yield $P^t = U \Lambda^t U^{-1}$, as a way to express $p_{i,j}^{(t)}$ explicitly (as we do in the \hyperref[sec:cocc_results]{Results section}).

We illustrate the usefulness of the eigendecomposition with the derivation of an elementary, but nonetheless fundamental, property of the Markov chain.
\begin{lemma}[Limit Distribution]\label{lem:limit_distri}
\begin{equation}
\underset{t \rightarrow \infty}{\lim} P^t = \left[O_{(n+1) \times n} \,\Big\vert\, \mathbf{1}_{n+1} \right].
\end{equation}
\end{lemma}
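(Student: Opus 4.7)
The natural route is to exploit the eigendecomposition $P^t = U \Lambda^t U^{-1}$ furnished by Lemmas~\ref{lem:eigenvector_mat_U} and \ref{lem:eigenvector_mat_U_inverse}, and to reduce the computation of the limit to identifying a single column of $U$ and a single row of $U^{-1}$. This makes the result essentially a one-line consequence of the diagonalization, with no probabilistic machinery beyond what is already in place.

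The plan is as follows. First, I would examine $\Lambda^t$. Since $\lambda_j = j/n$ for $0 \le j \le n$, one has $\lambda_j^t \to 0$ for every $j < n$ and $\lambda_n^t = 1$ for all $t$. Hence
\begin{equation}
\lim_{t\to\infty}\Lambda^t \;=\; \mathbf{e}_n^{} \mathbf{e}_n^\top,
\end{equation}
where $\mathbf{e}_n^{} \in \mathbb{R}^{n+1}$ is the standard basis vector with a $1$ in position $n$. Substituting into the eigendecomposition,
\begin{equation}
\lim_{t\to\infty} P^t \;=\; U\,\mathbf{e}_n^{} \mathbf{e}_n^\top\, U^{-1} \;=\; [U]_{:,n}^{}\,[U^{-1}]_{n,:}^{}.
\end{equation}
So the problem reduces to computing the last column of $U$ and the last row of $U^{-1}$.

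The second step is to read these off from Lemmas~\ref{lem:eigenvector_mat_U} and \ref{lem:eigenvector_mat_U_inverse}. For the last column of $U$ we have $[U]_{i,n}^{} = \binom{n-i}{0} = 1$ for every $0 \le i \le n$, so $[U]_{:,n}^{} = \mathbf{1}_{n+1}^{}$. For the last row of $U^{-1}$, $[U^{-1}]_{n,j}^{} = (-1)^{j-n}\binom{0}{n-j}$ when $n \le j$, which vanishes for $j<n$ and equals $1$ at $j=n$, so $[U^{-1}]_{n,:}^{} = \mathbf{e}_n^\top$. The outer product $\mathbf{1}_{n+1}^{}\,\mathbf{e}_n^\top$ is precisely the $(n+1)\times(n+1)$ matrix with zeros in columns $0,\ldots,n-1$ and ones in column $n$, i.e.\ $[O_{(n+1)\times n}\mid \mathbf{1}_{n+1}^{}]$, which is the claimed limit.

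There is no real obstacle: the only mildly delicate point is to justify passing the limit inside the matrix product $U\Lambda^t U^{-1}$, which is immediate because $U$ and $U^{-1}$ are fixed finite matrices and matrix multiplication is continuous entrywise. As a sanity check, the result is also consistent with the probabilistic picture: state $n$ is absorbing and, since every $p_k > 0$ for $k<n$, the process reaches $n$ in finite expected time from any initial state, forcing each row of $\lim_t P^t$ to place all its mass on state $n$.
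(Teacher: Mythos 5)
Your proof is correct and follows the same route as the paper: both use the eigendecomposition $P^t = U\Lambda^t U^{-1}$, note that $\Lambda^t \to \diag(0,\dots,0,1)$, and conclude that the limit is $[O_{(n+1)\times n}\mid \mathbf{1}_{n+1}]$. Your version merely spells out the final step more explicitly as the outer product of the last column of $U$ with the last row of $U^{-1}$, which the paper leaves implicit.
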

\begin{proof}
We will make use of the eigendecomposition, $P^t = U \Lambda^t U^{-1}$, and the observation that $\Lambda$ is a diagonal matrix of which all elements are smaller than $1$ except the bottom-right element. That is $\lambda_j < 1$ for all $0 \le j < n$, and $\lambda_n = 1$. Hence,
\begin{equation}
\begin{aligned}
\underset{t \rightarrow \infty}{\lim} P^t 	&= \underset{t \to \infty}{\lim} U \Lambda^t U^{-1} 	\\
											&= U \left( \underset{t \to \infty}{\lim}  \Lambda^t \right) U^{-1}		\\
											&= U \diag(0,0,...,1) U^{-1}	 		\\
											&= \left[O_{(n+1) \times n} \,\Big\vert\, \mathbf{1}_{n{+}1} \right],
\end{aligned}
\end{equation}
where $O_{(n+1) \times n}$ denotes an $(n+1) \times n$ matrix containing only zeros, and $\mathbf{1}_{n+1}$ denotes a column vector containing $n+1$ ones.
\end{proof}
In other words, \Cref{lem:limit_distri} states that the final state $n$ is absorbing, and all other states are transient.

\section{Results}\label{sec:cocc_results}
In this section we derive the exact probability mass function and distribution function of the classical occupancy problem in their initial-state conditioned forms. Furthermore, we derive an efficient system of recurrence relations for the (complementary) distribution function. 

\subsection{The Probability Mass Function} 
We shall show that the probability mass function of the classical occupancy problem can be derived from the eigendecomposition of the transition matrix, $P$.

\begin{proposition}[Probability Mass Function]\label{prop:pmf_eigen}
The probability mass function of the classical occupancy problem is given by
\begin{align}
f(k,t,n) & \,\overset{(1)}{=}\, \frac{1}{n^t} \binom{n}{k} \sum_{j=0}^k (-1)^{k-j} \binom{k}{j} j^t \label{eq:pmf_eigen1} \\ 
		 & \,\overset{(2)}{=}\, \frac{(n)_{k}^{}}{{n^t}} \stirlingtwo{t}{k} \label{eq:pmf_eigen2}
\end{align}
for $ 0 \le k \le \min(n,t)$; and $0$ otherwise.
\end{proposition}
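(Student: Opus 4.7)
The plan is to read off the probability mass function directly from the eigendecomposition established in the previous section. Since $X_0 = 0$ by convention, we have $f(k,t,n) = \Pr(X_t = k \mid X_0 = 0) = p_{0,k}^{(t)} = [P^t]_{0,k}$, i.e.\ the top row of $P^t$. Using $P^t = U \Lambda^t U^{-1}$ and reading off $[U]_{0,j} = \binom{n}{n-j} = \binom{n}{j}$, $\lambda_j^t = (j/n)^t$, and $[U^{-1}]_{j,k} = (-1)^{k-j}\binom{n-j}{n-k}$ for $j \le k$ (and zero otherwise), I would write
\begin{equation*}
[P^t]_{0,k} \;=\; \sum_{j=0}^{k} \binom{n}{j}\Bigl(\tfrac{j}{n}\Bigr)^t (-1)^{k-j}\binom{n-j}{n-k} \;=\; \frac{1}{n^t}\sum_{j=0}^{k} (-1)^{k-j}\binom{n}{j}\binom{n-j}{n-k} j^t.
\end{equation*}

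Next I would invoke the elementary ``subset-of-a-subset'' identity $\binom{n}{j}\binom{n-j}{n-k} = \binom{n}{k}\binom{k}{j}$, which is verified by expanding the factorials. Factoring $\binom{n}{k}$ out of the sum gives exactly form \eqref{eq:pmf_eigen1}. To pass from \eqref{eq:pmf_eigen1} to \eqref{eq:pmf_eigen2}, I would apply the finite-difference representation of the Stirling numbers recorded in \eqref{eq:stirling2_as_fin_diff}, namely $\stirlingtwo{t}{k} = \frac{1}{k!}\sum_{j=0}^{k}(-1)^{k-j}\binom{k}{j}j^t$. Multiplying by $k!$ and combining with $\binom{n}{k}$ produces the falling factorial $(n)_k = n!/(n-k)!$, yielding \eqref{eq:pmf_eigen2}.

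Finally, I would check the stated range $0 \le k \le \min(n,t)$: the constraint $k \le n$ is forced by the state space, and for $k > t$ the finite difference $\Delta^k[x^t]_{x=0}$ vanishes (equivalently, $\stirlingtwo{t}{k} = 0$), so the formula correctly gives zero mass. No real obstacle presents itself here; the only subtlety is the little binomial rearrangement, which is standard and one line long. The value of this derivation lies less in difficulty than in the fact that it recovers the classical expressions as a one-line consequence of the eigendecomposition derived in Lemmas \ref{lem:eigenvector_mat_U} and \ref{lem:eigenvector_mat_U_inverse}, and will serve as the template for the initial-state conditioned versions obtained by taking other rows of $P^t$ in the next subsections.
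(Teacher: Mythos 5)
Your proposal is correct and follows essentially the same route as the paper: read off the top row of $P^t = U\Lambda^t U^{-1}$, apply the identity $\binom{n}{j}\binom{n-j}{n-k} = \binom{n}{k}\binom{k}{j}$, and then recognize the Stirling number via its finite-difference representation \eqref{eq:stirling2_as_fin_diff}. The only difference is cosmetic (your explicit check of the range $k \le \min(n,t)$, which the paper relegates to a footnote).
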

\begin{proof}
Consider any column $k$ in the zeroth row of the eigendecomposition of $P^t$, that is
\begin{equation}\label{eq:pmf_eigen_proof1} 
\begin{aligned}
f(k,t,n) 	&=  [P^t]_{0,k}  \\
			&=  [U \Lambda^t U^{-1}]_{0,k} =  [U]_{0,:} \Lambda^t [U^{-1}]_{:,k} \\			
			&= \frac{1}{n^t} \sum_{j=0}^k (-1)^{k-j} \binom{n-j}{n-k} \binom{n}{j} j^t. & \text{(cf. Eq. \eqref{eq:apx_U_binomcoeff} in Apx. \ref{apx:matrix_examples_transmat})} 
\end{aligned}
\end{equation}
Then using
\[ 
\binom{n-j}{n-k} \binom{n}{j} = \frac{\cancel{(n-j)!}}{(n-k)!\, (k-j)!} \frac{n!}{\cancel{(n-j)!}\, j!} = \frac{n!}{(n-k)!\, k!} \; \frac{k!}{(k-j)!\, j!} = \binom{n}{k} \binom{k}{j},
\]
and substituting this expression back into the expression \eqref{eq:pmf_eigen_proof1}, we obtain
\begin{equation*}
\begin{aligned}			
f(k,t,n)    &= \frac{1}{n^t} \binom{n}{k} \sum_{j=0}^k (-1)^{k-j} \binom{k}{j} j^t, 
\end{aligned}		
\end{equation*}
as desired in \eqref{eq:pmf_eigen1}.
To prove identity \eqref{eq:pmf_eigen2}, we start from identity \eqref{eq:pmf_eigen1} and simplify as follows
\begin{equation}\label{eq:pmf_eigen_proof2}
\begin{aligned}
f(k,t,n) 	&= \frac{1}{n^t} \binom{n}{k} \sum_{j=0}^k (-1)^{k-j}  \binom{k}{j} j^t \\
			&= \frac{1}{n^t} \frac{n!}{(n-k)!} \frac{1}{k!} \sum_{j=0}^k (-1)^{k-j} \binom{k}{j} j^t \\
			&= \frac{(n)_k^{}}{n^t} \stirlingtwo{t}{k}, \qquad \text{(by Eq. \eqref{eq:stirling2_as_fin_diff})}
\end{aligned}
\end{equation}
as desired.\footnote{Note: for $t > 0$ the zeroth summand evaluates to zero, and the summation may therefore also start from $j=1$; for $t=0$ the zeroth summand evaluates to one as a result of the convention $0^0 = 1$, as is proper.}
\end{proof}

Expression \eqref{eq:pmf_eigen1} is the classic expression as obtained by \cite{demoivre1711mensura} from inclusion-exclusion arguments (see also \citet[Ch. 4.2]{feller1968ed3}). Though the result itself is not new, we have not seen its derivation from the eigendecomposition of the transition matrix before.

Finally, from Eq. \eqref{eq:pmf_eigen_proof2} we see that we can also write the expression as a finite difference:
\begin{equation}
f(k,t,n) = \frac{(n)_k}{n^t} \frac{1}{k!} \Delta^{k} \left[ x^t \right]_{x=0}^{}.
\end{equation} 

In the next section, we generalize \Cref{prop:pmf_eigen} to its initial-state conditioned version.

\subsection{The Other Multistep Transition Probabilities}\label{sec:other_trans_probs}
Exponentiation of the transition matrix $P$ directly yields the values of all multistep transition probabilities, including the initial-state conditioned ones: $[P^t]_{r,k}^{} = p^{(t)}_{r,k} = P(X_{t} = k \,\vert\, X_0 = r)$. 

In the context of DeMoivre's $n$-faced die (Formulation~\ref{def:classic_occ_demoivre_die}), these quantities can be interpreted as the probability of observing $k$ distinct faces in a total $t+s$ throws, after having observed $r$ distinct faces in the first $s$ throws. In other words, the probability $p^{(t)}_{r,k}$ precisely captures how the ``prior probability" $p^{(t+s)}_{0,k}$ needs to be updated to include the information right after throw $s$. From the matrix $P^t$ we can read-off any $p_{r,k}^{(t)}$. These are simply interpreted as $t$-step initial-state conditioned probabilities.

The eigendecomposition that we derived in \Cref{sec:eigendecomposition} directly yields explicit expressions for all these probabilities. We obtain ``closed-form" expressions in this section as well, by recognizing the $r$-Stirling numbers of the second kind in the derived expressions \citep{broder1984r-stirling, carlitz1980one}. To our knowledge these probabilities have not been obtained in this form before.

Let us note, that  these closed-form expressions can alternatively be obtained by using the techniques and results of \Cref{chpt:pbp_in_disc_time}. This follows from the observation that conditioning on the initial state results in a pure birth process again. 

\begin{proposition}[Transition Probabilities]\label{prop:transition_probs_classical_occupancy}
The initial-state conditioned probability mass function of the classical occupancy problem is given by
\begin{align}
\Pr(X_t = k \,\vert\, X_0 = r)  & \, \overset{(1)}{=} \, \frac{1}{n^t} \,  \binom{n-r}{n-k}  \sum_{j=0}^{k-r} (-1)^{k-r-j} \binom{k-r}{j} (j+r)^t \label{eq:explicit_from_eigendecomp} \\	
			 	&	\, \overset{(2)}{=} \, \frac{(n-r)_{k-r}^{}}{n^t} \stirlingtwo{t+r}{r,\;k}		\label{eq:pmf_r_stirling}		
\end{align}
for $0 \le r \le k \le \min(n,t)$; and $0$ otherwise.
\end{proposition}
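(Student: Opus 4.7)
The plan is to imitate the proof of the preceding proposition (\Cref{prop:pmf_eigen}), but to extract the $(r,k)$-entry of $P^t$ rather than the $(0,k)$-entry. Since \Cref{lem:eigenvector_mat_U} and \Cref{lem:eigenvector_mat_U_inverse} give us the full eigendecomposition $P^t = U\Lambda^t U^{-1}$, the transition probability is simply
\[
\Pr(X_t = k \mid X_0 = r) = [P^t]_{r,k} = [U]_{r,:} \, \Lambda^t \, [U^{-1}]_{:,k}.
\]
Because $[U]_{r,i} = \binom{n-r}{n-i}$ is supported on $i \ge r$ and $[U^{-1}]_{i,k} = (-1)^{k-i}\binom{n-i}{n-k}$ is supported on $i \le k$, the only contributions come from $r \le i \le k$. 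Substituting the index shift $j = i - r$ and pulling out the factor $n^{-t}$ coming from $\lambda_i^t = (i/n)^t$, I obtain
\[
\Pr(X_t = k \mid X_0 = r) = \frac{1}{n^t}\sum_{j=0}^{k-r} (-1)^{k-r-j} \binom{n-r}{n-r-j}\binom{n-r-j}{n-k}(j+r)^t.
\]

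The key algebraic step is the subset-of-a-subset identity
\[
\binom{n-r}{n-r-j}\binom{n-r-j}{n-k} = \binom{n-r}{n-k}\binom{k-r}{j},
\]
which one verifies by expanding both sides as ratios of factorials (this is the natural generalization of the analogous manipulation used in the proof of \Cref{prop:pmf_eigen}). Factoring the resulting $\binom{n-r}{n-k}$ out of the sum yields expression \eqref{eq:explicit_from_eigendecomp} immediately.

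For identity \eqref{eq:pmf_r_stirling}, I would rewrite $\binom{n-r}{n-k} = \binom{n-r}{k-r} = (n-r)_{k-r}/(k-r)!$ and then recognize the remaining sum, after dividing by $(k-r)!$, as the finite-difference representation of the $r$-Stirling number of the second kind (\Cref{eq:r_stirling2_as_fin_diff}) evaluated at index $t+r$, namely
\[
\stirlingtwo{t+r}{r,\,k} = \frac{1}{(k-r)!}\sum_{j=0}^{k-r}(-1)^{k-r-j}\binom{k-r}{j}(j+r)^{t}.
\]
Combining the two pieces gives $\Pr(X_t = k \mid X_0 = r) = \tfrac{(n-r)_{k-r}}{n^t}\stirlingtwo{t+r}{r,k}$, as desired.

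The computation is almost entirely mechanical once the eigendecomposition is in place; there is no real obstacle. The one spot that requires a little care is keeping the index bookkeeping straight during the shift $j = i-r$ and confirming that the resulting exponent on $j+r$ is precisely $t$ (not $t-r$), so that the match with the $r$-Stirling finite-difference formula comes out cleanly. The boundary conditions ($k < r$, $k > t$, or $k > n$) are all handled automatically by the support of $U$ and $U^{-1}$, which forces the sum to be empty and the probability to be zero.
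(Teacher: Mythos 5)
Your derivation of identity \eqref{eq:explicit_from_eigendecomp} is essentially the paper's own algebraic proof: the same extraction of $[P^t]_{r,k}=[U]_{r,:}\Lambda^t[U^{-1}]_{:,k}$, the same index shift $j=i-r$, and the same subset-of-a-subset identity $\binom{n-r}{n-r-j}\binom{n-r-j}{n-k}=\binom{n-r}{n-k}\binom{k-r}{j}$. For identity \eqref{eq:pmf_r_stirling}, however, you take a genuinely different route. The paper proves \eqref{eq:pmf_r_stirling} by an \emph{independent} combinatorial path-counting argument (transitions contribute $(n-r)_{k-r}/n^{k-r}$, stops contribute $h_{t-k+r}(r,\dots,k)/n^{t-k+r}$, which is the $r$-Stirling number by definition \eqref{eq:r-stirling2_as_compl_homo_poly}), and then uses the coincidence of the two proofs to \emph{derive} the explicit formula \eqref{eq:r-stirling2_explicit_from_eigendecomp} for $r$-Stirling numbers as a byproduct. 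You instead deduce \eqref{eq:pmf_r_stirling} algebraically from \eqref{eq:explicit_from_eigendecomp}, which requires taking the finite-difference representation of the $r$-Stirling numbers as a known input rather than obtaining it as output. Both are valid; the paper's route yields an extra identity for free, while yours is shorter and stays entirely within the eigendecomposition framework. One caution: the formula you quote, $\stirlingtwo{t+r}{r,\,k}=\frac{1}{(k-r)!}\sum_{j=0}^{k-r}(-1)^{k-r-j}\binom{k-r}{j}(j+r)^{t}$, is the correct one (it matches the definition via $h_{t-k}(r,\dots,k)$ and Sylvester's identity), but it is \emph{not} what Eq.~\eqref{eq:r_stirling2_as_fin_diff} in the Preliminaries literally says (that display carries $\frac{1}{k!}\Delta^{k}$ rather than $\frac{1}{(k-r)!}\Delta^{k-r}$), so you should either cite \eqref{eq:r-stirling2_explicit_from_eigendecomp} or verify the normalization directly. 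Finally, your closing remark that all boundary cases are ``handled automatically by the support of $U$ and $U^{-1}$'' is accurate only for $k<r$, where the sum is genuinely empty; for $t<k-r$ the sum is nonempty and the vanishing instead follows from the fact that the $(k-r)$-th finite difference of a polynomial of degree $t<k-r$ is zero. This is a cosmetic overstatement, not a gap in the main computation.
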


We give an algebraic proof of \eqref{eq:explicit_from_eigendecomp} first, taking the eigendecomposition of the transition matrix as our starting point. We then give a combinatorial proof of \eqref{eq:pmf_r_stirling}. 

\begin{proof}[Algebraic proof of \eqref{eq:explicit_from_eigendecomp}.]
Consider the eigendecomposition of $P^t$, that is, 
\begin{equation}\label{eq:conditional_prob_eigendecomp}
\begin{aligned}
p^{(t)}_{r,k} &= [P^t]_{r,k}^{} = \left[ U \Lambda^t U^{-1} \right]_{r,k}^{} = [U]_{r,\mathbf{:}}^{} \, \Lambda^t \, [U^{-1}]_{\mathbf{:},k}^{} \\
              &= \frac{1}{n^t} \sum_{j=r}^k (-1)^{k-j} \binom{n-j}{n-k} \binom{n-r}{j-r} j^t & \text{(cf. Eq. \eqref{eq:apx_U_binomcoeff} in Apx. \ref{apx:matrix_examples_transmat})}  \\ 
			  &= \frac{1}{n^t} \sum_{j=0}^{k-r} (-1)^{k-j-r} \binom{n-j-r}{n-k} \binom{n-r}{j} (j+r)^t. & \text{(reindex)} \\
\end{aligned}
\end{equation}
Note that the product of binomial coefficients in each summand can be re-expressed as follows
\begin{equation}
\begin{aligned}
\binom{n-j-r}{n-k} \binom{n-r}{j} 	&= \frac{\cancel{(n-j-r)!}}{(k-j-r)!(n-k)!} \frac{(n-r)!}{j!\cancel{(n-j-r)!}} \\
								  	&= \frac{(n-r)!}{(n-k)!}\, \frac{1}{(k-r-j)!j!} \\
								  	&= \frac{(n-r)!}{(n-k)!}\, \frac{1}{(k-r)!} \, \frac{(k-r)!}{(k-r-j)!j!} &\text{(introduce $(k-r)!$)} \\									&= \binom{n-r}{n-k}\, \binom{k-r}{j}. 
\end{aligned}
\end{equation}
Substituting this latter expression back into equation \eqref{eq:conditional_prob_eigendecomp}, we obtain
\begin{equation}
\begin{aligned}
p^{(t)}_{r,k} = \frac{1}{n^t} \, \binom{n-r}{n-k} \sum_{j=0}^{k-r} (-1)^{k-r-j} \binom{k-r}{j} (j+r)^t. \\ 
\end{aligned}
\end{equation}
as desired in expression \eqref{eq:explicit_from_eigendecomp}.
\end{proof}
Note that \eqref{eq:explicit_from_eigendecomp} is in the form of a finite difference (cf. \Cref{sec:fin_difference}). For the second expression, \eqref{eq:pmf_r_stirling}, we give a combinatorial proof, based on the probability weight of the $t$-step $(r \to k)$-path.
\begin{proof}[Combinatorial proof of \eqref{eq:pmf_r_stirling}.]
Each $(r \rightarrow k)$-path contains the consecutive transitions $r \rightarrow (r+1)$, $(r+1) \rightarrow (r+2)$, ..., $(k-1) \rightarrow k$, which contribute probability weight
\begin{equation}\label{eq:pmf_probweight_trans}
\frac{(n-r) \cdot (n-r-1) \cdots (n-k+1)}{n^{k-r}} = \frac{(n-r)_{(k-r)}^{}}{n^{k-r}}
\end{equation}
to each such path.\footnote{Identical argument to the one used in \Cref{sec:pbp_results_excact_distri}.}

This leaves $t-k+r$ stops to be chosen on the path, under the restriction that each new stop must be further down the chain than the farthest stop already on the path. The sum of these probability weights over all possible stop configurations, equals 
\begin{equation}\label{eq:pmf_probweight_stops0}
\frac{1}{n^{d}} \sum \limits_{r \le i_1 \leqslant i_2 \leqslant ... \leqslant i_{d} \le k}  i_1 \cdot i_2 \cdots i_d = \frac{1}{n^{d}} h_{d}(r,r+1,...,k)  
\end{equation}
with $d = t-k+r$. Now, using equation \eqref{eq:r-stirling2_as_compl_homo_poly} from the \nameref{chpt:preliminaries}, we can write \eqref{eq:pmf_probweight_stops0} as
\begin{equation}\label{eq:pmf_probweight_stops}
\frac{1}{n^{t-k+r}} h_{t-k+r}(r,r+1,...,k) = \frac{1}{n^{t-k+r}} \stirlingtwo{t+r}{r,\;k}.
\end{equation}
Finally, we combine the probability weight contributed by the transitions \eqref{eq:pmf_probweight_trans} and the stops \eqref{eq:pmf_probweight_stops}, to obtain
\begin{equation*}
p^{(t)}_{r,k} = \frac{(n-r)_{k-r}^{}}{n^t} \stirlingtwo{t+r}{r,\;k},
\end{equation*}
as desired in \eqref{eq:pmf_r_stirling}.
\end{proof}

Since both expression \eqref{eq:explicit_from_eigendecomp} and \eqref{eq:pmf_r_stirling} equal $p^{(t)}_{r,k}$, these distinct proofs show that the $r$-Stirling numbers of the second kind are given by the explicit formula
\begin{equation}\label{eq:r-stirling2_explicit_from_eigendecomp}
\stirlingtwo{t+r}{r,\; k} = \frac{1}{(k-r)!} \sum_{j=0}^{k-r} (-1)^{k-r-j} \binom{k-r}{j} (j+r)^t,
\end{equation}
and furthermore as the following finite difference:\footnote{Note that this derivation is markedly different from that of \cite{carlitz1980one}.}
\begin{equation}\label{eq:r-stirling2_as_fin_diff} 
\stirlingtwo{t+r}{r,\; k} = \frac{1}{(k-r)!} \Delta^{k-r} \left[ x^t \right]_{x=r}.
\end{equation}

Concluding this section, we record a new identity for $r$-Stirling numbers of the second kind which generalizes the well known classic identity \eqref{eq:power_is_sum_fallingfactorial_times_stirlingtwo}. 

As $P$ is a row-stochastic matrix, each row of $P^t$ sums to $\frac{n^t}{n^t} = 1$. This observation leads immediately to the following sum-identity for $r$-Stirling numbers of the second kind. That is, the full row-sums of $n^t P^t$ are equal to
\begin{equation}\label{eq:r_stirling2_fallingfactorial_sum}
\sum_{j=r}^n (n)_j^{} \stirlingtwo{t+r}{r,\; j} = (n)_r^{} \, n^{t-1} 
\end{equation}
for any row $r=0, 1, 2,\, \dots, n$. The classic identity \eqref{eq:power_is_sum_fallingfactorial_times_stirlingtwo} follows immediately when we set $r$ equal to $0$.

\subsection{The Distribution Function}\label{sec:cocc_distri_func}
In this section we derive the distribution function of the classical occupancy problem. We shall do so for all initial-state conditioned distributions at once.

We give a purely matrix-algebraic proof here based on the eigendecomposition of the transition matrix, and a simplifying result for the matrix of right-eigenvectors, $U^{-1}$. Appendix \ref{apx:proofs} contains a proof based on the state-phase duality (that we discussed in \Cref{sec:pbp_first_hit_times}).\newline

We record some auxiliary results first. By definition the distribution function is the sum of probability masses over states $j=0,1,2,\,\dots, k \le n$. It is therefore convenient to capture the sum operation in matrix form.

We introduce the upper-triangular matrix $\Sigma$, defined as follows:
\begin{equation}\label{eq:def_sigma}
\left[ \Sigma\phantom{^{}} \right]_{i,j}^{} = \begin{cases}
1  & \text{if } i \le j; \\
0  & \text{otherwise};
\end{cases}
\end{equation}
for $0 \le i,j \le n$. Where it appears, the matrix can be assumed conformable to the matrix on which it acts. The matrix $\Sigma$ allows us to conveniently write $P^t \Sigma$ for the cumulative row-sums of $P^t$.

Furthermore, $\Sigma$'s inverse is given by
\begin{equation}\label{eq:def_sigma_inverse}
\left[ \Sigma^{-1} \right]_{i,j}^{} = \begin{cases}
\phantom{-}1  	& 	\text{if } j = i; 	\\
  		  -1  	& 	\text{if } j = i+1; \\
\phantom{-}0  	& 	\text{otherwise}.
\end{cases}
\end{equation}
Appendix \ref{apx:matrix_examples} contains examples of both matrices.

\begin{lemma}\label{lem:Uinv_times_Sigma} 
Let $U_n^{-1}$ denote the $(n+1) \times (n+1)$ right-eigenvector matrix of \Cref{lem:eigenvector_mat_U_inverse}, then
\begin{equation}
U_n^{-1} \Sigma = \left[ \begin{array}{c|c}
U_{n-1}^{-1} 	& \mathbf{0} \\
\hline 
\mathbf{0}^\top 	& 1
\end{array} \right]. 
\end{equation}
\end{lemma}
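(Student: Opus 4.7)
The plan is to prove this by a direct entry-wise computation, reducing it to a standard partial-sum identity for alternating binomial coefficients.

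First I would write out the general entry of the product. Using the formulas $[U_n^{-1}]_{i,j} = (-1)^{j-i}\binom{n-i}{n-j}$ for $i \le j$ (and $0$ otherwise) from \Cref{lem:eigenvector_mat_U_inverse}, together with the definition of $\Sigma$ in \eqref{eq:def_sigma}, the $(i,k)$ entry of $U_n^{-1}\Sigma$ for $i \le k$ becomes
\begin{equation*}
[U_n^{-1} \Sigma]_{i,k} \;=\; \sum_{j=i}^{k} (-1)^{j-i}\binom{n-i}{n-j} \;=\; \sum_{m=0}^{k-i} (-1)^{m}\binom{n-i}{m},
\end{equation*}
where I reindexed with $m = j-i$ and used the symmetry $\binom{n-i}{n-j} = \binom{n-i}{j-i}$. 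For $i > k$ the entry is manifestly zero, and for the bottom row $i = n$ only $j = n$ contributes, yielding $1$ when $k=n$ and $0$ otherwise.

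Next I would invoke the classical telescoping identity
\begin{equation*}
\sum_{m=0}^{r} (-1)^{m}\binom{N}{m} \;=\; (-1)^{r}\binom{N-1}{r},
\end{equation*}
valid for all integers $N \ge 1$ and $0 \le r \le N$ (and equal to $0$ when $r = N$ with the convention $\binom{N-1}{N} = 0$). This follows from Pascal's rule by induction on $r$, or equivalently by telescoping since $(-1)^m\binom{N}{m} = (-1)^m\binom{N-1}{m} - (-1)^{m-1}\binom{N-1}{m-1}$. Applying it with $N = n-i$ and $r = k-i$ gives
\begin{equation*}
[U_n^{-1} \Sigma]_{i,k} \;=\; (-1)^{k-i}\binom{n-1-i}{k-i} \;=\; (-1)^{k-i}\binom{n-1-i}{n-1-k},
\end{equation*}
which is exactly $[U_{n-1}^{-1}]_{i,k}$ for $0 \le i \le k \le n-1$.

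Finally I would check the two ``boundary'' pieces of the claimed block form. For the last column $k = n$ with $i < n$, the sum runs to $r = n - i = N$, and the identity gives $0$, so the last column of $U_n^{-1}\Sigma$ is $[0,\dots,0,1]^\top$ as required. For the last row $i = n$ the only nonzero term occurs when $k=n$, giving $1$. This matches the block structure on the right-hand side and completes the proof. I do not anticipate any serious obstacle — the only non-trivial input is the alternating partial-sum identity, which is elementary.
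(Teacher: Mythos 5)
Your proof is correct and follows essentially the same route as the paper's: an entry-wise expansion of the product reduced to the alternating partial-sum identity $\sum_{m=0}^{r}(-1)^m\binom{N}{m}=(-1)^r\binom{N-1}{r}$, which the paper simply cites from Comtet while you derive it by telescoping Pascal's rule. If anything your write-up is the tidier one, since you obtain the vanishing of the last column as the degenerate case $r=N$ of the same identity rather than invoking the Binomial Theorem separately, and you keep the signs explicit where the paper's stated identity drops the $(-1)^{j-i}$ factor.
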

\begin{proof}
Consider the following cases:
\begin{enumerate}
\item For the bottom row, simply note that $\sum_{i=0}^{j} 0  = 0$ (for all $ j = 0,1,...,n-1$), and ${1 + \sum_{i=0}^{n} 0 = 1}$. 
\item For the rightmost column, apply the Binomial Theorem: $\sum_{i=0}^j (-1)^{j-i} \binom{j}{i} = (1 - 1)^{j-i} = 0$.
\item For the topleft $n \times n$ submatrix of $U_n^{-1}\Sigma$, note that each element $(i,j)$ satisfies  
\[		\sum \limits_{i=0}^j (-1)^{j-i} \binom{n-i}{n-j} =  \binom{n-i-1}{n-j-1}, \quad  0 \le j < n,      \] 
and equals zero for $j=n$. See e.g. \citet[Theorem E, p. 10]{comtet1974combinatorics} for a reference, and Appendix \ref{apx:matrix_examples} for a concrete example.
\end{enumerate}
This completes the proof.
\end{proof}
This interesting result is also quite convenient. Applying it to the eigendecomposition of $P^t$, that is $U \Lambda^t (U^{-1} \Sigma)$, it shows for one, that the distribution function in matrix form is \emph{not} more complicated than the probability mass function. And secondly, it yields explicit formul{\ae} for the conditional distribution, $\Pr(X_t \le j \,\vert\, X_0 = r)$ for any $0 \le r \le n$.

\begin{proposition}[Cumulative Occupancy Distribution]\label{prop:cdf_from_eigendecomp}
The initial-state conditioned cumulative distribution function of the classical occupancy problem is given by
\begin{equation}\label{eq:cdf_from_eigendecomp}
\begin{aligned}
\Pr(X_t \le k \,\vert\, X_0 = r) &= \frac{n-k}{n^t} \binom{n-r}{n-k} \sum_{j=0}^{k-r} (-1)^{k-r-j}  \binom{k-r}{j} \frac{(r+j)^t}{n-r-j} \\
\end{aligned}
\end{equation}
for all integers $0 \le r \le k < n$ and $t \ge 0$; $\Pr(X_0 \le n \,\vert\, X_0 = k) = 1$ for all $0 \le k \le n$; and $0$ otherwise.
\end{proposition}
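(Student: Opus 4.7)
The plan is to derive the cumulative distribution function directly from the eigendecomposition of the transition matrix, leveraging the fortunate simplification provided by Lemma \ref{lem:Uinv_times_Sigma}. Since right-multiplication by the matrix $\Sigma$ from \eqref{eq:def_sigma} produces cumulative row-sums, we have the identity $\Pr(X_t \le k \mid X_0 = r) = [P^t \Sigma]_{r,k} = [U\Lambda^t U^{-1}\Sigma]_{r,k}$. This reduces the entire problem to computing a single matrix entry, with $U$ and $U^{-1}\Sigma$ now both enjoying clean, closed-form expressions.

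First I would expand the product in terms of rows and columns, writing
\[
[P^t\Sigma]_{r,k} = \sum_{j=0}^{n} [U]_{r,j}\, \lambda_j^t\, [U^{-1}\Sigma]_{j,k}.
\]
For $k<n$, Lemma \ref{lem:Uinv_times_Sigma} tells us that the bottom row and rightmost column of $U^{-1}\Sigma$ contribute nothing in column $k$, while the top-left block is $U_{n-1}^{-1}$, whose $(j,k)$ entry is $(-1)^{k-j}\binom{n-1-j}{n-1-k}$ for $j \le k$ and zero otherwise. Combined with $[U]_{r,j}=\binom{n-r}{n-j}$ vanishing for $j<r$, and with $\lambda_j = j/n$, the sum collapses to
\[
[P^t\Sigma]_{r,k} = \frac{1}{n^t}\sum_{j=r}^{k} (-1)^{k-j}\binom{n-r}{n-j}\binom{n-1-j}{n-1-k}\, j^t.
\]

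Second, reindex with $i=j-r$ so the sum runs from $0$ to $k-r$, and verify the factorial identity
\[
\binom{n-r}{n-r-i}\binom{n-1-r-i}{k-r-i} = \binom{n-r}{n-k}\binom{k-r}{i}\,\frac{n-k}{n-r-i},
\]
which is a one-line manipulation after writing each binomial coefficient as a ratio of factorials and noting that $(n-1-k)!=(n-k-1)!$. Substituting back and pulling the factor $\binom{n-r}{n-k}(n-k)/n^t$ out of the sum yields the claimed expression \eqref{eq:cdf_from_eigendecomp}.

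Third, handle the two boundary cases: for $k=n$, the claim $\Pr(X_0 \le n \mid X_0 = r)=1$ is immediate since the state space is $\{0,1,\ldots,n\}$ (alternatively it follows from the row-stochasticity of $P^t$, i.e. $[P^t\Sigma]_{r,n}=1$); and for $k<r$ the probability is zero since the process is non-decreasing. The only mild obstacle is the binomial-coefficient identity above, but once the reindexing $i=j-r$ is made and a factor of $(k-r)!/(k-r)!$ introduced to manufacture $\binom{k-r}{i}$, it reduces to verifying $(n-r-i)\cdot(n-1-k)!=(n-k)!/(n-k)\cdot(n-r-i)$, which is transparent.
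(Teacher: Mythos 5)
Your proposal follows the paper's own route essentially verbatim: both start from $[P^t\Sigma]_{r,k}=[U\Lambda^t(U^{-1}\Sigma)]_{r,k}$, invoke \Cref{lem:Uinv_times_Sigma} to replace $U^{-1}\Sigma$ by the block matrix containing $U_{n-1}^{-1}$, arrive at the same intermediate sum $\frac{1}{n^t}\sum_{j=r}^{k}(-1)^{k-j}\binom{n-r}{n-j}\binom{n-j-1}{n-k-1}j^t$, and then simplify the product of binomial coefficients and reindex by $j\mapsto r+j$ (your identity checks out since $(n-k-1)!=(n-1-k)!$). The argument is correct and matches the paper's proof in both structure and detail.
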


\begin{proof} 
We start by considering the eigendecomposition of $P^t$. The cumulative row-sums of $P^t$ may be expressed using the sum-matrix $\Sigma$ (Eq. \eqref{eq:def_sigma}), as follows
\begin{equation}
P^t \Sigma = U \Lambda^t U^{-1} \Sigma =  U \Lambda^t V^{-1},
\end{equation}
with 
\begin{equation}
V^{-1} = \left[ \begin{array}{c|c}
				U_{n-1}^{-1} & \mathbf{0} \\
				\hline
				\mathbf{0'}  & 1
				\end{array} \right],
\end{equation}
as in \Cref{lem:Uinv_times_Sigma}.

Next, consider that $\sum_{j=0}^k [P]_{r,j}^{t}$ equals matrix element $\left[P^t \Sigma \right]_{r,k}^{}$ which is the sought probability. We make the expression explicit as follows
\begin{equation}\label{eq:conditional_cdf_eigendecomp_in_proof}
\begin{aligned}
\left[P^t \Sigma \right]_{r,k}^{} &= \left[ U \Lambda^t U^{-1} \Sigma \right]_{r,k}^{} = \left[ U \Lambda^t (U^{-1} \Sigma) \right]_{r,k}^{}  \\
							&= [ U ]_{r,\mathbf{:}}^{} \, \Lambda^t \, [V^{-1}]_{\mathbf{:},k}^{} \\
              				&= \frac{1}{n^t} \sum_{j=r}^k (-1)^{k-j} \binom{n-j-1}{n-k-1} \binom{n-r}{n-j} j^t. 
\end{aligned}
\end{equation} 
(We recommend that the reader views Eq. \eqref{eq:apx_UinvSigma_binomcoeff_explicit} in Appendix \ref{apx:matrix_examples_transmat}.)

Now consider the product of binomial coefficients in the last line of \eqref{eq:conditional_cdf_eigendecomp_in_proof}, and simplify as follows:
\begin{equation}
\begin{aligned}
\binom{n-j-1}{n-k-1} \binom{n-r}{n-j} &= \frac{\cancel{(n-j-1)!}}{(k-j)!(n-k-1)!}\; \frac{(n-r)!}{(j-r)!\cancel{(n-j-1)!}}\; \frac{1}{n-j} \\
				&= \frac{(n-r)!}{(n-k-1)!}\; \frac{1}{(k-j)!(j-r)!}\; \frac{1}{n-j}\\
				&= (n-k)\;\frac{(n-r)!}{(k-r)!(n-k)!}\; \frac{(k-r)!}{(k-j)!(j-r)!} \;\frac{1}{n-j}\\				
				&= (n-k)\; \binom{n-r}{n-k} \binom{k-r}{j-k} \frac{1}{n-j}  .
\end{aligned}
\end{equation}
Next, substitute this expression into \eqref{eq:conditional_cdf_eigendecomp_in_proof} and reindex the summation as follows
\begin{equation}\label{eq:cdf_proof_last_eq}
\begin{aligned}
\Pr(X_t \le k \vert\, X_0 = r) 	&= \frac{n-k}{n^t} \sum_{j=r}^{k} (-1)^{k-j} \binom{n-r}{n-k} \binom{k-r}{j-k} \frac{j^t}{n-j}  		\\                          								&= \frac{(n-k)}{n^t} \binom{n-r}{n-k} \sum_{j=r}^{k} (-1)^{k-j}   \binom{k-r}{j-r} \frac{j^t}{n-j}  	\\
								&= \frac{(n-k)}{n^t} \binom{n-r}{n-k} \sum_{j=0}^{k-r} (-1)^{k-j-r} \binom{k-r}{j} \frac{(r+j)^t}{n-r-j},  \quad\text{(reindex: $j \leftarrow r+j$)},
\end{aligned}
\end{equation} 
which completes the proof.
\end{proof}

Finally, note that Eq. \eqref{eq:cdf_proof_last_eq} is compactly expressible as the finite difference
\begin{equation}
\Pr(X_t \le k \,\vert\, X_0 = r) = \frac{(n-r)\cdots(n-k)}{n^t} \frac{1}{(k-r)!} \Delta^{k-r} \left[ \frac{x^t}{n-x}  \right]_{x=r}^{},
\end{equation}
with the unconditional distribution function as a special case ($r=0$):
\begin{equation}
F(k,t,n) = \frac{(n)_{k+1}^{}}{n^t} \frac{1}{k!} \Delta^{k} \left[ \frac{x^t}{n-x}  \right]_{x=0}^{}.
\end{equation}

\subsection{Recurrence Relations for the Distribution Function}\label{sec:cocc_recur_distri}
To make calculation of the distribution function feasible and thereby practically accessible, we derive an efficient algorithm for the calculation of the cumulative probabilities in this section. 

We first derive a system of linear recurrence relations for the cumulative distribution function and subsequently a bidiagonal matrix version. These recurrence relations bear a striking resemblance to their direct analogues governing the probability mass function, i.e. the update rules in Eq. \eqref{eq:pmf_recur}.

\begin{proposition}[CDF Recurrence]\label{prop:cdf_recur}
The cumulative distribution function of the classical occupancy problem, $F(k,t,n)$, obeys the recurrence relations 
\begin{equation}
F(k,t+1,n) = 
\begin{dcases}
\frac{n-k}{n} F(k-1,t,n)  +  \frac{k}{n} F(k,t,n) & \textnormal{if } \;  k < t;  \\
1 & \textnormal{if }  \; k \ge t;  
\end{dcases}
\end{equation}
for all $1 \le k \le n$ and $t > 0$.
\end{proposition}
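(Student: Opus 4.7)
The plan is to derive \Cref{prop:cdf_recur} as a direct specialization of \Cref{prop:pbp_cdf_recur}, which was already established in the preceding chapter for any discrete-time pure birth process. The classical occupancy chain of \Cref{fig:classic_occpancy_markov_chain} is precisely such a process, with transition probabilities $p_k = (n-k)/n$ and stop probabilities $1 - p_k = k/n$ given explicitly in \eqref{eq:transition_probs}. Substituting these expressions into the first branch of the general recurrence yields
\[
F(k, t+1, n) = \tfrac{n-k}{n}\, F(k-1, t, n) + \tfrac{k}{n}\, F(k, t, n) \qquad (k < t),
\]
while the boundary clause $F(k, t+1, n) = 1$ for $k \ge t$ transfers verbatim. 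This route is the shortest and fits the paper's overarching narrative that the classical occupancy results are special cases of the general pure birth framework developed in \Cref{chpt:pbp_in_disc_time}.

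For a reader who prefers a self-contained derivation inside the present chapter, a parallel proof is available. The idea is to sum the one-step p.m.f. recursion \eqref{eq:pmf_recur} over $j = 0, 1, \ldots, k$. After an index shift on the ``transition'' term, the coefficients $j/n$ and $(n-j)/n$ combine to $1$ on each interior state, leaving only a boundary contribution of $\tfrac{k}{n}\, f(k, t, n)$ at the top together with $F(k-1, t, n)$ from the lower states. Substituting $f(k, t, n) = F(k, t, n) - F(k-1, t, n)$ and collecting coefficients then delivers the claimed recurrence. A third route, entirely analogous to the proof of \Cref{prop:pbp_ccdf_recur}, is to combine the state–phase identity \eqref{eq:pbp_pmf_phase_in_terms_of_stage} (which gives $F(k, t+1, n) = F(k, t, n) - p_k f(k, t, n)$ after taking complements) with the same p.m.f.–c.d.f. difference.

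I do not anticipate any real obstacle: the argument is essentially bookkeeping, and the substantive content already resides in \Cref{prop:pbp_cdf_recur}. The only minor points of care are the edge terms in the summation derivation -- the $j = 0$ summand vanishes because $p_0 = 1$ forces $f(0, t, n) = 0$ for $t \ge 1$ -- and the boundary regime $k \ge t$, which simply reflects that $X_{t+1}$ cannot exceed $t + 1$ when the process starts at $X_0 = 0$.
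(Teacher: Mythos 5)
Your proposal is correct, and its primary route differs from the paper's in presentation though not in substance. The paper proves \Cref{prop:cdf_recur} self-containedly inside the chapter by exactly the two-identity argument you list as your third route: $f(k,t,n)=F(k,t,n)-F(k-1,t,n)$ in the state variable, $F(k,t+1,n)=F(k,t,n)-\frac{n-k}{n}f(k,t,n)$ in the time variable, then elimination of $f$. Your first route --- substituting $p_k=(n-k)/n$ into \Cref{prop:pbp_cdf_recur} --- is the shortcut the paper itself advertises but deliberately declines to take in \Cref{chpt:special_case}; it is sound, since the deferred proof of \Cref{prop:pbp_cdf_recur} in Appendix \ref{apx:proofs} is the same two-identity argument and nowhere uses the hypothesis $0<p_i<1$ in an essential way. (Strictly, the occupancy chain has $p_0=1$, so the stated hypothesis of the general proposition is violated; a one-line remark would be needed if you take that route.) Your second route, summing the Chapman--Kolmogorov recursion \eqref{eq:pmf_recur} over $j=0,\dots,k$ and telescoping the coefficients $\frac{j}{n}+\frac{n-j}{n}=1$, is a genuinely different and equally valid derivation that the paper does not give; it buys independence from the state--phase duality at the cost of slightly more index bookkeeping. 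One caution that applies to your boundary discussion and to the paper's alike: for $k=t<n$ the process reaches state $t+1$ with positive probability $\prod_{j=0}^{t}\frac{n-j}{n}$, so $F(t,t+1,n)<1$ and your appeal to ``$X_{t+1}$ cannot exceed $t+1$'' does not cover $k=t$; this off-by-one imprecision is inherited from the statement and from \Cref{prop:pbp_cdf_recur} (contrast the complementary version in \Cref{prop:pbp_ccdf_recur}, whose clause $k>t$ is correct), not introduced by you.
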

\begin{proof}
Beginning with the latter case ($k \ge t$), when $t$ is smaller or equal to state $k$ no probability mass has flowed further than state $k$, or, put another way, no path has reached a state further than $k$ yet. All probability mass (in total equal to one) is therefore contained in the previous states. 

For the case $k < t$, we proceed as follows:
\begin{itemize}
\item Note that by definition we have (in the state space)
\[
f(k,t,n) = F(k,t,n) - F(k-1,t,n).
\]
\item Furthermore, observe that (in the time domain)
\[
F(k,t+1,n) = F(k,t,n) - \frac{n-k}{n} f(k,t,n),
\]
which after some rearranging becomes
\[
f(k,t,n) = \frac{n}{n-k} \Big( F(k,t,n) - F(k,t+1,n) \Big).
\]
\item Equating the righthand sides of the foregoing two expressions, i.e.
\[
\frac{n}{n-k} \Big( F(k,t,n) - F(k,t+1,n) \Big) = F(k,t,n) - F(k-1,t,n),
\]
and rearranging, we deduce
\[
\frac{n}{n-k} F(k,t+1,n) = \frac{k}{n-k} F(k,t,n) + F(k-1,t,n),
\]
which after multiplication by $\frac{n-k}{n}$ becomes the desideratum.
\end{itemize}
\end{proof}

The complementary c.d.f. obeys a virtually identical system of recurrence relations, which we state in the following proposition.
\begin{proposition}[Complementary CDF Recurrence]\label{prop:ccdf_recur}
The complementary cumulative distribution function of the classical occupancy problem, $\overline{F}(k,t,n)$, obeys the recurrence relations
\begin{equation}
\overline{F}(k,t+1,n) = 
\begin{dcases}
\frac{k}{n} \overline{F}(k,t,n) + \frac{n-k}{n} \overline{F}(k-1,t,n) & \textnormal{if } \;  k \le t;  \\
0 & \textnormal{if }  \; k > t;  
\end{dcases}
\end{equation}
for all $1 \le k < n$ and $t > 0$; with $\overline{F}(0,t,n) = 1$ for $t > 0$.
\end{proposition}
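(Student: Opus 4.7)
The statement is the specialization of \Cref{prop:pbp_ccdf_recur} to the classical occupancy chain, whose transition probabilities are $p_k = (n-k)/n$ and $q_k = k/n$. My plan is therefore to invoke that earlier proposition and substitute these values; the two cases ($k>t$ and $k\le t$) line up verbatim with the two branches of the statement. This produces a one-line proof, and also clarifies that the statement really carries no new content beyond the general pure birth result — it is a cosmetic rewrite for the classical occupancy parametrization.

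For readers preferring a self-contained argument, I would redo the derivation along the same lines as the proof of \Cref{prop:pbp_ccdf_recur}. First, for $k > t$, since $X_0 = 0$ and the chain can advance by at most one state per time step, it is impossible to have $X_t > k$; hence $\overline{F}(k,t,n) = 0$ (and a fortiori $\overline{F}(k,t+1,n) = 0$ when $k > t$, using that $t+1 > k$ fails exactly on the boundary, which one handles directly). Second, for $k \le t$, I would combine the state-space identity
\begin{equation*}
f(k,t,n) = \overline{F}(k-1,t,n) - \overline{F}(k,t,n),
\end{equation*}
which is immediate from the definitions, with the time-domain identity
\begin{equation*}
\overline{F}(k,t+1,n) = \overline{F}(k,t,n) + \tfrac{n-k}{n}\, f(k,t,n),
\end{equation*}
which is a consequence of the state-phase duality relations \eqref{eq:pbp_pmf_phase_in_terms_of_stage} and \eqref{eq:pbp_cdf_phase_equiv_stage} applied at $p_k = (n-k)/n$. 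Eliminating $f(k,t,n)$ between these two expressions yields the desired recurrence.

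The boundary condition $\overline{F}(0,t,n) = 1$ for $t > 0$ is immediate since the chain leaves state $0$ after its first step with probability $p_0 = 1$ (recall $X_0 = 0$ and $p_0 = n/n = 1$), so $X_t \ge 1$ with probability one whenever $t \ge 1$. There is no real obstacle here — the argument is purely bookkeeping — the only small subtlety is verifying that the transition case $k = t$ indeed falls in the nontrivial branch and produces $\overline{F}(t,t+1,n) = \frac{n-t}{n}\overline{F}(t-1,t,n)$ (the other summand vanishes because $\overline{F}(t,t,n) = 0$), which is consistent with the fact that the only $t{+}1$-step path reaching a state beyond $t$ is the all-transition path.
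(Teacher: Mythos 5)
Your proof is correct and follows essentially the same route as the paper's: the paper likewise eliminates $f(k,t,n)$ between the state-space identity $f(k,t,n)=\overline{F}(k-1,t,n)-\overline{F}(k,t,n)$ and the time-domain identity $\overline{F}(k,t+1,n)=\overline{F}(k,t,n)+\tfrac{n-k}{n}f(k,t,n)$. The only caution concerns your one-line alternative: \Cref{prop:pbp_ccdf_recur} is stated under the hypothesis $0<p_i<1$, which the occupancy chain violates at $p_0=1$, so the self-contained derivation you give (which does not need that hypothesis) is the safer formal justification.
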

\begin{proof}
The proof is analogous to that of Proposition \ref{prop:cdf_recur}. To summarize:
\begin{itemize}
	\item For $k > t$, observe that no path has reached state $k$ yet, therefore this (cumulative) probability is zero.
	\item For $k \le t$, write the p.m.f. in terms of the complementary c.d.f. in two ways, that is
		\[
		 \overline{F}(k-1,t,n) - \overline{F}(k,t,n) \overset{(1)}{=} f(k,t,n) \overset{(2)}{=} \frac{n}{n-k} \Big(\overline{F}(k,t+1,n) - \overline{F}(k,t,n) \Big) ,
		\]
and rearrange to isolate $\overline{F}(k,t+1,n)$.
\end{itemize}
\end{proof} 

Again, we note the great similarity of this system of recurrence relations to the one describing the p.m.f. in Eq. \eqref{eq:pmf_recur}. Note that the system of recurrence relations for the complementary c.d.f. has the benefit of being sparse -- owing to the second case ($k > t$) being equal to zero.  

\subsubsection{A Matrix Implementation}\label{sec:cocc_matrix_implementation}
The system of linear recurrence relations for the complementary c.d.f. has an equivalent formulation in terms of a bidiagonal matrix -- highly similar to transition matrix $P$ -- which we state next.

Given the striking resemblance of the system of recurrence relations to that of the p.m.f. (Eq. \ref{eq:pmf_recur}), we present Proposition \ref{prop:ccdf_recur} here in matrix form, as follows
\begin{equation}\label{eq:ccdf_recur_matrixform}
C = \dfrac{1}{n}\begin{bmatrix}
n 	& n-1 	&   			&   			&   		&	    			&			&	  \\
  	& 1		& n-2 			&				& 			&	    			&			&	  \\
	&		& \small\ddots 	& \small\ddots 	&			&					& 			&	  \\
	&		&				&	k-1			& 	n-k 	&					&	 	    &     \\ 
	&		&				&				&   k		& 	n-k-1 			&	 	    &     \\ 		
	&		&				&				&			& 	\small\ddots 	& 	\small\ddots	&  	 	  \\ 	
	&		&				&				&			& 		 			&	n-2		&  1	  \\ 
	&		&				&				&			& 		 			&			&  n-1	  \\ 	
\end{bmatrix},
\end{equation}
and formally in the next proposition.
\begin{proposition}\label{prop:ccdf_recur_matrixform}
The complementary distribution function, $\overline{F}(j,t,n)$, for $t>0$, is given by the $j$\textsuperscript{th} element in the top row of the matrix $C^t$. With $C$ defined by
\begin{equation}
\left[C \right]_{i,j}^{} = \begin{cases}
\dfrac{n}{n}     	& \textnormal{if } 	j=i=1; 		\\
\dfrac{j}{n}   		& \textnormal{if }  j=i\neq 0; 	\\
\dfrac{n-j}{n}   	& \textnormal{if } 	j=i+1; 		\\
0					& \textnormal{otherwise};
\end{cases}
\end{equation}
and indexed $0 \le {i,j} < n$.
\end{proposition}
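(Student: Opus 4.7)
The plan is to mirror the strategy used for the general PBP analogue, Proposition \ref{prop:pbp_ccdf_recur_matrixform}: translate the scalar recurrence of Proposition \ref{prop:ccdf_recur} directly into matrix form, then close by induction on $t$.

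Setting $g(j, t) := [C^t]_{0, j}$ and using $C^{t+1} = C^t \cdot C$, we have $g(k, t+1) = \sum_i g(i, t)\, [C]_{i,k}$. Because $C$ is bidiagonal---with $[C]_{k,k} = k/n$ on the diagonal (for $k \ge 1$), $[C]_{k-1, k} = (n-k)/n$ on the super-diagonal, and the corner $[C]_{0,0} = 1$---the sum collapses to two terms and yields
\begin{equation*}
g(k, t+1) = \frac{n - k}{n}\, g(k-1, t) + \frac{k}{n}\, g(k, t) \qquad (1 \le k < n),
\end{equation*}
together with $g(0, t+1) = g(0, t)$. These are exactly the relations satisfied by $\overline{F}(\cdot, \cdot, n)$ in Proposition \ref{prop:ccdf_recur}, with the absorbing corner encoding the boundary $\overline{F}(0, t, n) \equiv 1$.

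The sparsity branch ``$\overline{F}(k, t, n) = 0$ for $k > t$'' is automatic from the shape of $C$: upper-bidiagonality means each multiplication by $C$ extends the support of the top row by at most one column, so the top row of $C^t$ vanishes beyond column $t$. Combined with the recurrence above, the two sequences satisfy the same first-order dynamics and the same sparsity pattern, so a matching base case will propagate to every $(j, t)$ in the admissible range.

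The main obstacle is careful bookkeeping of the $j = 0$ corner, where the scalar recurrence would formally reference the absent $\overline{F}(-1, t, n)$; the matrix sidesteps this by elevating the corner entry $[C]_{0, 0} = 1$ to an absorbing row, which is precisely the matrix-level embodiment of the convention $\Pr(X_t > -1) \equiv 1$. Once the index alignments and the base row are checked, the proof reduces to a line-by-line transcription of Proposition \ref{prop:ccdf_recur}, exactly as in the general PBP case.
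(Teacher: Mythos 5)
Your recurrence-matching step is sound and is the same argument the paper itself gives (its proof is the one-liner ``write out $[C^{t+1}]_{0,k}=[C^{t}]_{0,:}[C]_{:,k}$''), but the part you defer --- ``a matching base case will propagate'' --- is not a formality: for the matrix $C$ as defined in the statement there is \emph{no} matching base case, and the induction cannot close. Concretely, the top row of $C^{1}$ is $\bigl[1,\tfrac{n-1}{n},0,\dots,0\bigr]$, whereas $\overline{F}(j,1,n)=\Pr(X_1>j)$ is $[1,0,\dots,0]$ because $X_1=1$ almost surely; the two already disagree at $(j,t)=(1,1)$. The slip is hidden in your sparsity comparison: \Cref{prop:ccdf_recur} gives $\overline{F}(k,t+1,n)=0$ for $k>t$, i.e.\ $\overline{F}(k,s,n)=0$ for $k\ge s$, while the top row of $C^{s}$ vanishes only for $k>s$; indeed $[C^{s}]_{0,s}=\prod_{i=1}^{s}\tfrac{n-i}{n}>0$ although $\overline{F}(s,s,n)=\Pr(X_s>s)=0$. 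So the supports are off by one column and the ``same first-order dynamics, same sparsity pattern'' claim fails.

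What your computation actually proves, once the correct initial vector is identified ($[C^{0}]_{0,:}=[1,0,\dots,0]=\overline{F}(\cdot,1,n)$), is that $\overline{F}(j,t,n)=[C^{t-1}]_{0,j}$ for $t\ge 1$ --- one power of $C$ fewer than asserted. (Check with $n=2$: $[C^{t}]_{0,1}=1-2^{-t}$ but $\Pr(X_t>1)=1-2^{-(t-1)}$.) Equivalently, the exponent $t$ is correct only for the $(n{+}1)\times(n{+}1)$ matrix in the displayed form \eqref{eq:pbp_ccdf_recur_matrixform} of \Cref{prop:pbp_ccdf_recur_matrixform}, which for the occupancy chain has diagonal $\tfrac{1}{n}(n,0,1,\dots,n-1)$ and superdiagonal $\tfrac{1}{n}(n,n-1,\dots,1)$ and whose top row yields $\Pr(X_t\ge j)$; the $n\times n$ matrix of the present statement follows the formal (off-by-one) definition instead. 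To repair your proof, either verify the $t=1$ row and change the conclusion to $\overline{F}(j,t,n)=[C^{t-1}]_{0,j}$, or carry out the translation with the larger matrix. As written, the ``index alignments and the base row'' you flag as remaining to be checked are precisely where the claim breaks.
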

\begin{proof} 
This is a direct matrix translation of \Cref{prop:ccdf_recur}; with a proof identical to that of \Cref{prop:pbp_ccdf_recur_matrixform} (\Cref{chpt:pbp_in_disc_time}).
\end{proof}
\Cref{prop:ccdf_recur_matrixform} provides a way to calculate $\overline{F}(k,t,n)$ as the topmost row of $C^t$. Note, that if one is only interested in the complementary c.d.f. of the first $k$ states, then $C$ may be constrained to its top-left $(k+1) \times (k+1)$ submatrix. Furthermore, note that exponentiation of $C$ has the exact same computational complexity as exponentiation of transition matrix $P$. 

\section{Expectation and Variance}\label{sec:cocc_expectation_variance}
In this section we derive the expectation and variance of the classical occupancy distribution. First, we briefly recount a commonly seen derivation which is based on a sum of (non-independent) Bernoulli random variables. We shall contrast it to the approach taken by \cite{price1946multinomial}.

Suppose, we make $t$ draws with replacement from a population $[n]$. The probability that any element, say element $i$, is not sampled in $t$ draws equals $(1 - \frac{1}{n})^t$, by independence of the draws. Define the Bernoulli random variable $B_i$ as being equal to $1$ if element $i$ is not sampled, and equal to $0$ if the element $i$ is sampled in any of the $t$ draws. Then the expectation of the sum of these Bernoulli variables, $\E\left[\sum_{i=1}^n B_i\right] = \sum_{i=1}^n \E[B_i] = n (1 - \frac{1}{n})^t $, equals the expected number of non-sampled population elements, $\E[Y_t] = n - \E[X_t] $. 

We find this derivation unsatisfying because of its indirectness and disregard for the true underlying distribution. Fortunately, \citet[Sec. 5]{price1946multinomial} provides a simple and direct derivation of the expectation and variance of the classical occupancy distribution, that we wish to emulate here.

\begin{proposition} The expectation of the random variable $X_t$ having the classical occupancy distribution (of \Cref{prop:pmf_eigen}) with parameters $n,t$ equals
\begin{equation} \label{eq:expectation_classical_occupancy_price}
\E\left[X_t \,\vert\, n \right] = n - n \left(1 - \frac{1}{n} \right)^t.
\end{equation}
\end{proposition}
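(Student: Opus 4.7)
The plan is to follow the Markov-chain perspective that has been cultivated throughout the chapter and obtain $\E[X_t \,\vert\, n]$ by setting up and solving a one-step recurrence, rather than by manipulating the probability mass function directly or by introducing non-independent indicator variables (which the authors explicitly wish to avoid). Spiritually this matches Price's ``simple and direct'' derivation, and it exploits nothing beyond the one-step transition law of \Cref{fig:classic_occpancy_markov_chain}.

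First I would condition on $X_t$. Given $X_t = k$, the chain either advances to $k+1$ with probability $(n-k)/n$ or stays at $k$ with probability $k/n$, so
\begin{equation*}
\E[X_{t+1} \,\vert\, X_t = k] \;=\; (k+1)\cdot\frac{n-k}{n} \;+\; k \cdot \frac{k}{n} \;=\; k + \frac{n-k}{n} \;=\; \left(1 - \frac{1}{n}\right) k + 1.
\end{equation*}
Taking expectation over $X_t$ and using the tower property yields the linear first-order recursion
\begin{equation*}
\E[X_{t+1}] \;=\; \left(1 - \frac{1}{n}\right) \E[X_t] + 1, \qquad \E[X_0] = 0.
\end{equation*}

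Second I would solve this recursion in closed form. Rewriting it as $\E[X_{t+1}] - n = (1 - 1/n)\bigl(\E[X_t] - n\bigr)$ exhibits $n$ as the unique fixed point and shows that the deviation $\E[X_t] - n$ contracts by the factor $1 - 1/n$ at each step. Iterating from $\E[X_0] - n = -n$ gives $\E[X_t] - n = -n(1 - 1/n)^t$, which rearranges to the claimed identity \eqref{eq:expectation_classical_occupancy_price}.

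There is essentially no hard step here: the only nontrivial moment is recognizing the correct conditional expectation, which is immediate from the transition diagram and requires no appeal to the explicit form of the PMF (\Cref{prop:pmf_eigen}) nor to $r$-Stirling number identities. The approach is also robust, in that the same argument applied to $\E[X_t^2]$ (using $\E[X_{t+1}^2 \,\vert\, X_t = k] = (k+1)^2(n-k)/n + k^3/n$) would produce a similar linear recursion from which the variance follows by the same contraction-to-fixed-point technique.
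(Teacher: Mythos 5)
Your proof is correct, but it follows a genuinely different route from the one in the paper. The paper implements Price's device: starting from the explicit probability mass function $f(k,t,n) = \frac{(n)_k}{n^t}\stirlingtwo{t}{k}$, it compares $\Pr(X_t = k \,\vert\, n)$ with $\Pr(\widetilde{X}_t = k \,\vert\, n-1)$ for a population one element smaller, cancels the common Stirling-number factor to obtain the pointwise identity $n(1-\tfrac{1}{n})^t \Pr(\widetilde{X}_t = k \,\vert\, n-1) = (n-k)\Pr(X_t = k \,\vert\, n)$, and then sums over $k$ so that the right-hand side collapses to $n - \E[X_t \,\vert\, n]$. You instead never touch the p.m.f.: you compute $\E[X_{t+1} \,\vert\, X_t = k] = (1-\tfrac{1}{n})k + 1$ directly from the one-step transition law, apply the tower property to get the affine recursion $\E[X_{t+1}] = (1-\tfrac{1}{n})\E[X_t] + 1$, and solve it by identifying $n$ as the fixed point. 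Both arguments are sound and of comparable length. Your version is arguably even more faithful to the thesis's stated Markovian program, since it requires only the transition probabilities and no knowledge of the exact distribution; it also extends cleanly to higher moments via recursions for $\E[X_t^m]$ (your displayed conditional second moment is correct). What the paper's route buys in exchange is the structurally interesting cross-population identity linking the occupancy distributions over $[n]$ and $[n-1]$, and a scheme that expresses each moment in terms of lower-order moments of the genuine distribution, which the author uses again to state the variance \eqref{eq:variance_classical_occupancy} and reuses in the appendix for the complementary occupancy chain.
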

\begin{proof}
Let us introduce the notation $\Pr(X_t = k  \,\vert\, n)$ to denote the probability that the process $X_t$ over population $[n]$ is in state $k$ at time $t$. The proof relies on a comparison of $\Pr(X_t = k  \,\vert\, n)$ with $\Pr(\widetilde{X}_t = k  \,\vert\, n-1)$, in combination with $\sum_{k=0}^n \Pr(X_t = k  \,\vert\, n) = 1$.

From equation \eqref{eq:pmf_eigen2} for the probability mass function of the classical occupancy distribution, we have
\begin{equation}
\Pr(X_t = k  \,\vert\, n) = \frac{(n)_{k}^{} }{n^t} \stirlingtwo{t}{k}, \quad \text{and} \quad \Pr(\widetilde{X}_t = k  \,\vert\, n-1) = \frac{(n-1)_{k}^{} }{(n-1)^t} \stirlingtwo{t}{k}.
\end{equation}
Note the Stirling number as a common factor in both expressions. Therefore, we have 
\begin{equation}
\begin{aligned}
\phantom{\therefore\;}  \frac{(n-1)^t }{ (n-1)_{k}}\Pr(\widetilde{X}_t = k  \,\vert\, n-1) 	\;\;=&\;\;\;     \frac{n^t }{(n)_{k} } \Pr(X_t = k  \,\vert\, n)	\\
\therefore\; (1 - \frac{1}{n})^t \Pr(\widetilde{X}_t = k  \,\vert\, n-1)  				  	\;\;=&\;\;\;    	\frac{ (n-1)_{k}}{(n)_{k} } \Pr(X_t = k  \,\vert\, n)	\\
\therefore\;  (1 - \frac{1}{n})^t \Pr(\widetilde{X}_t = k  \,\vert\, n-1) 		\;\;=&\;\;\;       \frac{ n-k}{n } \Pr(X_t = k  \,\vert\, n)		\\
\therefore\;  n(1 - \frac{1}{n})^t \Pr(\widetilde{X}_t = k  \,\vert\, n-1) 	\;\;=&\;\;\;   (n-k) \Pr(X_t = k  \,\vert\, n).
\end{aligned}
\end{equation}
Then sum both sides over $k$: 
\begin{itemize}
\item For the right-hand side, we obtain
\begin{equation}
\begin{aligned}
\sum_{k=0}^{n}(n-k) \Pr(X_t = k  \,\vert\, n) &=  n\sum_{k=0}^{n} \Pr(X_t = k  \,\vert\, n) - \sum_{k=0}^{n} k\Pr(X_t = k  \,\vert\, n) \\ 
												&= n - \E\left[X_t \, \vert \, n \right]. \\
\end{aligned}
\end{equation}
\item And for the left-hand side, we have
\begin{equation}
n(1-\frac{1}{n})^t \sum_{k=0}^{n-1} \Pr(\widetilde{X}_t = k \,\vert\, n-1) = n(1-\frac{1}{n})^t.
\end{equation}
\end{itemize}
Solving for the expectation, we obtain $\E\left[X_t \, \vert \, n \right] = n - n(1-\frac{1}{n})^t$, as desired.
\end{proof}

The variance can be obtained along the same lines, that is from 
\begin{equation}
{k(n-k)\Pr(X_t = k  \,\vert\, n) =  k n (1 - \frac{1}{n})^t \Pr(\widetilde{X}_t = k  \,\vert\, n-1)},
\end{equation}
which after some manipulation, yields the expression
\begin{equation}\label{eq:variance_classical_occupancy}
\begin{aligned}
\V \left[ X_t  \, \vert \, n \right] &=  n\left(1-\frac{1}{n}   \right)^t  + n(n-1) \left(1 - \frac{2}{n} \right)^t - n^2 \left(1 - \frac{1}{n} \right)^{2t}.
\end{aligned}
\end{equation}
We note that the approach suggested by \cite{price1946multinomial} can be used to recursively determine any moment in terms of lower order moments. It should also be mentioned that \cite{weiss1958limit_distri} derived the raw moments of all orders, albeit in an algebraically more involved manner.

\section{The Randomized Occupancy Model}\label{sec:rand_occ_model}
There are many variations of the occupancy model in the literature \citep[see e.g.][]{feller1968ed3, harkness1969occupancy, kotz_johnson1977urn_models, holst1986birthday}. A common variation in the applied literature is the addition of a single and constant retention probability, $p$, for all urns. The following definition paraphrases \cite{harkness1969occupancy}:

\begin{definition}[Randomized Occupancy]\label{def:randomized_occupancy}
\textnormal{Given $n$ distinguishable urns of unbounded capacity, and $t$ throws with distinguishable balls having equiprobability of ending up in any of the $n$ urns, and each ball having probability $p$ of occupying the urn, what is the probability of finding $k$ urns occupied?}
\end{definition}
That is, each ball has probability $1-p$ of ``missing" the targeted urn, or ``falling through" the urn. This model is a generalization of the classical occupancy problem, which itself is recovered as a special case when $p=1$.

In contrast to \cite{harkness1969occupancy}, we describe the model not in terms of empty urns, $Y_t$, but in terms of occupied urns, $X_t = n - Y_t$. We go a step further by simplifying the expressions for the probability mass function and distribution function, and \textit{en passant} answer a question posed by \citet[Sec. 4]{harkness1969occupancy} in his section on the Markovian representation of the problem. 

\subsubsection{Markov Chain Embedding} 
In the randomized occupancy model, the discrete-time Markov process $X_t$ transitions from state $k$ to state $k+1$ when an unoccupied urn is randomly targeted \emph{and} the ball is retained by the urn. These events are independent, hence their joint probability is
\begin{equation}\label{eq:randomized_occupancy_transition_prob}
p_{k}^{} = \frac{n-k}{n} p.
\end{equation}
Equivalently, the stop probability in state $k$ is
\begin{equation}\label{eq:randomized_occupancy_stop_prob}
q_{k}^{} = 1 - p_{k}^{} = 1 - \frac{n-k}{n} p = \frac{n - (n-k)p}{n}\,. 
\end{equation}
Formally, we define the transition matrix as follows
\begin{equation}\label{eq:randomized_occupancy_trans_matrix}
\left[ P \right]_{i,j}^{} =  
\begin{dcases} 
q_i = \frac{n - (n-j)p}{n}  	& \mbox{if }\, j = i; 	\\ 
p_i = \frac{(n - j)p}{n} 		& \mbox{if }\, j = i + 1;	\\
0    	        				& \mbox{otherwise};
\end{dcases}
\end{equation}
for all integer indices $0\le i,j \le n$. These relations may equivalently be expressed in terms of the following recurrence relations:
\begin{equation}\label{eq:randomized_occupancy_pmf_recur}
p_{i,j}^{(t+1)} \overset{\text{def}}{=} \left[ P^{t+1} \right]_{i,j}^{} = 
\begin{dcases} 
(1-p)^{t+1}													& \mbox{if }\; 		i=j=0; 				\\
q_{j}^{} p_{i,j}^{(t)}  \,+\, p_{j-1}^{} p_{i,j-1}^{(t)} 	& \mbox{if }\;  	0 < i \le j \le n \text{ and } j < t; 	\\ 
0 															& \mbox{otherwise}; 				\\ 
\end{dcases}
\end{equation}
for $i,j \in S = \{0, 1,\, \dots, n\}$  and $t > 0$; and initially $p_{i,i}^{(0)} = 1$, $i \in S$.

\subsubsection{Representation as Mixture}\label{sec:rand_occ_as_mixture}
\citet[Sec. 3.3]{kotz_johnson1977urn_models} express the randomized occupancy distribution as a binomial mixture of \emph{classical} occupancy distributions. Taking this as our starting point, we obtain the following expression for the probability mass function
\begin{equation}\label{eq:randomized_occupancy_pmf}
\begin{aligned}
f(k,t;p,n) = p_{0,k}^{(t)}  &= \binom{n}{k} \sum_{j=0}^k (-1)^{k-j} \binom{k}{j} \left(1 - p + \frac{jp}{n}  \right)^t & \text{(Johnson \& Kotz)} \\
							&= \binom{n}{k} \sum_{j=0}^k (-1)^{k-j} \binom{k}{j} \left(\frac{n - (n - j)p}{n}  \right)^t &  \\
							&= \binom{n}{k} \sum_{j=0}^k (-1)^{k-j} \binom{k}{j} q_{j}^t. & \text{(by Eq. \eqref{eq:randomized_occupancy_stop_prob})}
\end{aligned}
\end{equation}
The cumulative distribution function can be obtained by means of the first hitting time formulation for state $k$, that is by using the methods of \Cref{sec:pbp_first_hit_times}, in particular relations \eqref{eq:pbp_cdf_phase_equiv_stage} and \eqref{eq:pbp_ccdf_first_hit_time_as_finite_sum_of_stage_pmf}, as we demonstrate in the following proposition.
\begin{proposition}[CDF Randomized Occupancy]\label{prop:randomized_occupancy_cdf}
The cumulative distribution function of the randomized occupancy problem is equal to
\begin{equation}\label{eq:randomized_occupancy_cdf}
F(k,t;p,n) = (n-k) \binom{n}{k} \sum_{j=0}^k (-1)^{k-j} \binom{k}{j} \dfrac{q_{j}^t}{(n-j)}  
\end{equation}
for all integers $0 \le k \le n$,\, $t > 0$, and any real $0 < p \le 1$.
\end{proposition}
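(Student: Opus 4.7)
The plan is to derive the cumulative distribution function from the state-phase duality summarized in \eqref{eq:pbp_ccdf_first_hit_time_as_finite_sum_of_stage_pmf}, following the same structural argument that was used to prove \Cref{prop:pbp_cdf_distinct}. First I would use the equivalence $\{X_t \le k\} = \{T_{k+1} > t\}$ from \eqref{eq:pbp_cdf_phase_equiv_stage} to write
\[
F(k,t;p,n) \;=\; \Pr(T_{k+1} > t) \;=\; p_k \sum_{d=t}^{\infty} f(k,d;p,n),
\]
where the second equality is identity (2) of \eqref{eq:pbp_ccdf_first_hit_time_as_finite_sum_of_stage_pmf}.

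Next I would substitute the known form of the probability mass function \eqref{eq:randomized_occupancy_pmf}, swap the finite summation over $j \in \{0,\dots,k\}$ with the convergent sum over $d$, and evaluate each geometric tail as $\sum_{d=t}^{\infty} q_j^d = q_j^t/(1 - q_j) = q_j^t/p_j$. This already produces an expression of the desired shape, with a residual factor $p_k/p_j$ inside the sum.

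The closing step invokes the explicit parametrisation \eqref{eq:randomized_occupancy_transition_prob}: since $p_k/p_j = (n-k)/(n-j)$ (the common factor $p/n$ cancels cleanly), pulling $(n-k)$ outside the summation recovers \eqref{eq:randomized_occupancy_cdf} exactly.

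I do not anticipate a genuine obstacle, as the argument is a direct analogue of the pure-birth calculation already performed for \Cref{prop:pbp_cdf_distinct}, merely applied to the finite-difference form of the p.m.f.\ rather than its Sylvester form. The only technical point to check is the convergence of the geometric series, which holds because $q_j = 1 - (n-j)p/n < 1$ for all $0 \le j \le k$ whenever $j < n$ and $p > 0$. The boundary case $k = n$, for which the stated formula degenerates, is trivial by absorption at state $n$.
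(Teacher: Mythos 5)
Your proposal is correct and follows essentially the same route as the paper's own proof: both write $F(k,t;p,n)=p_k\sum_{d=t}^{\infty}f(k,d;p,n)$ via the hitting-time duality, substitute the p.m.f.\ \eqref{eq:randomized_occupancy_pmf}, interchange the sums, evaluate the geometric tail as $q_j^t/p_j$, and cancel $p_k/p_j=(n-k)/(n-j)$. Nothing to add.
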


\begin{proof}
We have
\begin{equation}
\begin{aligned}
F(\underline{k},t; p, n)  &= p_{k}^{} \sum_{d=t}^\infty f(k,\underline{d}; p,n) & \text{(as hitting time)} \\
			&= p_{k}^{} \sum_{d=t}^\infty \binom{n}{k} \sum_{j=0}^k (-1)^{k-j} \binom{k}{j} q_{j}^d 	& \text{(by Eq. \eqref{eq:randomized_occupancy_pmf})}				\\
			&= p_{k}^{} \binom{n}{k} \sum_{j=0}^k (-1)^{k-j} \binom{k}{j} \sum_{d=t}^\infty q_{j}^d  & \text{(reorder summation)}	\\
			&= p_{k}^{} \binom{n}{k} \sum_{j=0}^k (-1)^{k-j} \binom{k}{j} \frac{q_{j}^t}{1 - q_{j}^{}} & \text{(geometric series)}	\\
			&= p \frac{(n-k)}{n} \binom{n}{k} \sum_{j=0}^k (-1)^{k-j} \binom{k}{j} \frac{q_{j}^t}{p_{j}^{}} 	& \text{(simplify)} \\
			&= (n-k) \frac{\cancel{p}}{\cancel{n}} \binom{n}{k} \sum_{j=0}^k (-1)^{k-j} \binom{k}{j} \dfrac{\cancel{n}}{\cancel{p}} \dfrac{q_{j}^t}{(n-j)}  	\\			
			&= (n-k) \binom{n}{k} \sum_{j=0}^k (-1)^{k-j} \binom{k}{j} \dfrac{q_{j}^t}{(n-j)},  		\\
\end{aligned}
\end{equation}
as desired.
\end{proof}
We note the great similarity between \Cref{prop:randomized_occupancy_cdf} and \Cref{prop:cdf_from_eigendecomp}. In fact, Eq. \eqref{eq:cdf_from_eigendecomp} can be recovered from Eq. \eqref{eq:randomized_occupancy_cdf} when we set $p=1$, that is: $q_{j}^{} = \sfrac{j}{n}$.

\Cref{prop:randomized_occupancy_cdf} can also be derived from the eigendecomposition of the transition matrix $P$. Or, \emph{vice versa}, the eigenvalues and eigenvectors are strongly suggested by equation \eqref{eq:randomized_occupancy_pmf} (and \Cref{prop:randomized_occupancy_cdf}). We record this assertion as an exact statement in the following proposition.
\begin{proposition}\label{prop:rand_occ_eigendecomp}
The eigendecomposition of the transition matrix of the randomized occupancy problem with parameters $t,p,n$, is given by 
\begin{equation*}
P^t = U \Lambda^t U^{-1}
\end{equation*}
with 
\begin{equation*}
\Lambda = \dfrac{1}{n} \diag\left(1-p,\, n - (n-1)p,\, n - (n-2)p,\, \dots ,\, n - p,\, n\right)
\end{equation*}
and
\begin{equation*}
[U]_{{i,j}}^{} = 
\begin{cases}
\binom{n-i}{n-j} & \textnormal{if }\, i \le j; \\
0                & \textnormal{otherwise};
\end{cases}
\quad
\text{ } \quad 
[U^{-1}]_{{i,j}}^{} = 
\begin{cases}
(-1)^{j-i}\binom{n-i}{n-j} & \textnormal{if }\, i \le j; \\
0                          & \textnormal{otherwise}.
\end{cases}
\end{equation*}
\end{proposition}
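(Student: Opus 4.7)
The plan is to exploit the fact that $P$ is upper bidiagonal, which immediately gives the spectrum, and then verify the columns of $U$ by a direct computation that mirrors the proof of \Cref{lem:eigenvector_mat_U}. First I would observe that, because $P$ is upper triangular, its eigenvalues are the diagonal entries $q_i = \frac{n - (n-i)p}{n}$ for $i = 0,1,\dots,n$; listed in the order $q_0, q_1, \dots, q_n$ these yield exactly $\Lambda$ as stated (with $q_0 = 1-p$ at the top-left and $q_n = 1$ at the bottom-right).

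Next, I would verify that $PU = U\Lambda$ by comparing column $k$ on both sides. Since $P$ acts only on two consecutive entries of any column vector, the identity $[PU]_{i,k} = \lambda_k [U]_{i,k}$ reduces, for $0 \le i \le k$, to
\begin{equation*}
q_i \binom{n-i}{n-k} + p_i \binom{n-i-1}{n-k} = \lambda_k \binom{n-i}{n-k}.
\end{equation*}
Substituting $q_i = \frac{n-(n-i)p}{n}$, $p_i = \frac{(n-i)p}{n}$, and $\lambda_k = \frac{n - (n-k)p}{n}$, multiplying through by $n$, and cancelling the common term $n\binom{n-i}{n-k}$, this reduces to the purely combinatorial identity
\begin{equation*}
(n-i)\left[\binom{n-i}{n-k} - \binom{n-i-1}{n-k}\right] = (n-k)\binom{n-i}{n-k},
\end{equation*}
which follows from Pascal's rule $\binom{n-i}{n-k} - \binom{n-i-1}{n-k} = \binom{n-i-1}{n-k-1}$ together with the elementary factorial identity $(n-i)\binom{n-i-1}{n-k-1} = (n-k)\binom{n-i}{n-k}$. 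The boundary case $i > k$ is trivial because both sides are zero.

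Finally, the stated expression for $U^{-1}$ is exactly the matrix established in \Cref{lem:eigenvector_mat_U_inverse}, which is the inverse of $U$ in the classical case and does not depend on $p$; hence it remains the inverse here. Combining these three ingredients yields $P = U\Lambda U^{-1}$, and consequently $P^t = U\Lambda^t U^{-1}$. The main obstacle, if any, is purely conceptual rather than technical: one has to notice that the right-eigenvector structure is insensitive to the retention probability $p$ -- only the eigenvalues are rescaled -- because $q_i$ and $p_i$ scale in a compatible way through the common factor $(n-i)$. Once this is recognized, the verification is the short binomial identity above.
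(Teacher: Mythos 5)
Your proposal is correct and is exactly the verification that the paper's one-line proof defers to: the paper merely states that the result follows ``in the same spirit as'' the eigenvalue computation and Lemmas \ref{lem:eigenvector_mat_U} and \ref{lem:eigenvector_mat_U_inverse}, i.e.\ a direct element-wise check of $PU = U\Lambda$ reducing to a binomial identity, which is what you carry out (and your reduction to $(n-i)\bigl[\binom{n-i}{n-k}-\binom{n-i-1}{n-k}\bigr]=(n-k)\binom{n-i}{n-k}$ is the correct $p$-independent core, explaining why only the eigenvalues change). One minor remark: the first entry of $\Lambda$ as printed in the statement should read $n-np$ rather than $1-p$ to be consistent with the prefactor $\tfrac{1}{n}$ (the eigenvalue itself is $q_0=1-p$, as you correctly compute); this is a typo in the proposition, not a gap in your argument.
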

\begin{proof}
In the same spirit as equation \eqref{eq:transition_mat_eigenvals} and Lemmas \ref{lem:eigenvector_mat_U} and \ref{lem:eigenvector_mat_U_inverse} of \Cref{sec:cocc_markov_chain}.
\end{proof}
Note that the eigenvalues are uniquely determined, but the eigenvectors are unique only upto nonzero scalar multiplication.

\Cref{prop:rand_occ_eigendecomp} directly answers the remark of \citet[Sec. 4]{harkness1969occupancy} that \textit{``Explicit `closed form' expressions for these sums of a simpler nature do not seem to be possible to obtain."} (Harkness then continues with a discussion of approximations.) However, we obtain the following explicit formula for said probabilities
\begin{equation}\label{eq:cond_pmf_randomized_occ_harkness}
\Pr(X_t = k \,\vert\, X_0 = r) = p_{r,k}^{(t)} = \binom{n-r}{n-k} \sum_{j=0}^{k-r} (-1)^{k-r-j} \binom{k-r}{j} q_{j+r}^t,
\end{equation}
for integer $0\le r \le k \le \min(n,t)$; and $0$ otherwise.\footnote{The reader may want to compare Eq. \eqref{eq:cond_pmf_randomized_occ_harkness} to  Eq. \eqref{eq:explicit_from_eigendecomp} of \Cref{prop:transition_probs_classical_occupancy}.} 

\subsubsection{Recurrence Relation for Distribution Function}
We conclude with a simple system of recurrence relations for the complementary c.d.f., just as we did before for the classical occupancy distribution.

\begin{proposition}[Complementary CDF: Recurrence]\label{prop:radomized_occ_ccdf_recur}
The complementary cumulative distribution function of the randomized occupancy problem, $\overline{F}(k,t;p,n)$, obeys the recurrence relations 
\begin{equation}
\overline{F}(k,t+1;p,n) = 
\begin{dcases}
\frac{(n-k)p}{n}\overline{F}(k-1,t;p,n) + \frac{n - (n-k)p}{n} \overline{F}(k,t;p,n) & \textnormal{if } \;  k \le t;  \\
0 & \textnormal{if }  \; k > t.  
\end{dcases}
\end{equation}
for all $0 \le k < n$, and $t > 0$.
\end{proposition}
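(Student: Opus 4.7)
The plan is to observe that the randomized occupancy model is a specific instance of a discrete-time pure birth process, whose transition probabilities have already been recorded in Equation \eqref{eq:randomized_occupancy_transition_prob} as $p_k = \frac{(n-k)p}{n}$ and $q_k = 1 - p_k = \frac{n-(n-k)p}{n}$. Consequently, the generic recurrence derived in \Cref{prop:pbp_ccdf_recur} applies immediately with this specific parametrization, and the proposition follows by direct substitution.

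More concretely, I would first verify that the randomized occupancy chain satisfies the hypotheses of \Cref{prop:pbp_ccdf_recur}, that is, that $0 < p_k < 1$ for all $0 \le k < n$ (which holds as long as $0 < p \le 1$, with $p_n = 0$ for the absorbing state). Then I would quote \Cref{prop:pbp_ccdf_recur}, which asserts
\begin{equation*}
\overline{F}(k,t+1,\mathbf{p}_{n+1}) = p_k \overline{F}(k-1,t,\mathbf{p}_{n+1}) + (1 - p_k) \overline{F}(k,t,\mathbf{p}_{n+1})
\end{equation*}
whenever $k \le t$, and zero otherwise. Substituting $p_k = \frac{(n-k)p}{n}$ and $1 - p_k = \frac{n-(n-k)p}{n}$ into this expression yields precisely the recursion claimed in the proposition.

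For completeness, I would mention the alternative route, which is a direct proof along the lines of \Cref{prop:pbp_ccdf_recur}: combine the state-space identity $f(k,t;p,n) = \overline{F}(k-1,t;p,n) - \overline{F}(k,t;p,n)$ with the time-forward identity $\overline{F}(k,t+1;p,n) = \overline{F}(k,t;p,n) + p_k f(k,t;p,n)$ (both direct consequences of the Markov property and the pure birth structure, as in \eqref{eq:pbp_pmf_phase_in_terms_of_stage} and \eqref{eq:pbp_cdf_phase_equiv_stage}) and eliminate $f(k,t;p,n)$ between them. Either derivation is essentially mechanical.

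There is no genuine obstacle here: the main content of the proposition is already contained in \Cref{prop:pbp_ccdf_recur}, and the only thing to check is that the substitution $p_k = \frac{(n-k)p}{n}$ is performed correctly and that the boundary case $k > t$ is inherited (which it is, since no path can have reached state $k > t$ in $t$ steps, regardless of the transition probabilities). The appropriate presentation is therefore a one-line reduction to \Cref{prop:pbp_ccdf_recur} rather than a reworked proof.
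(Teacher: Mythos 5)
Your proposal is correct and matches the paper: the paper itself remarks that the proof is identical to \Cref{prop:ccdf_recur} and \Cref{prop:pbp_ccdf_recur}, and the appendix proof it supplies is exactly your ``alternative route'' (eliminating $f(k,t;p,n)$ between the state-space identity and the time-forward identity). Your primary presentation as a one-line substitution of $p_k = \frac{(n-k)p}{n}$ into \Cref{prop:pbp_ccdf_recur} is the same reduction the paper invokes, so nothing is missing.
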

\begin{proof}
Proof is identical to \Cref{prop:ccdf_recur} and \Cref{prop:pbp_ccdf_recur}, but nonetheless included in Appendix \ref{apx:proofs}.

\end{proof}
These recurrence relations are of interest because they provide a simple and efficient way to calculate the tail probabilities of the randomized occupancy distribution. 

\subsection{Factorial and other Moments}
In closing, we record some results from \citet[Sec. 3]{harkness1969occupancy} pertaining to the the raw and factorial moments of the randomized occupancy model, thereby generalizing those that \cite{weiss1958limit_distri} obtained for the classical occupancy distribution. 

Harkness derived the factorial moments of the process $Y_t \overset{\text{def}}{=} n - X_t$ using generating functions, to arrive at
\begin{equation}
\E\left[ \left( Y_t \right)_k^{} \right] = (n)_k^{}  \left( 1 - \frac{kp}{n} \right)^t,
\end{equation}
in polished form. The raw moments then follow directly as 
\begin{equation}
\E [  Y_t \,^k ] =	\sum_{j=0}^k  \stirlingtwo{k}{j} (n)_k^{} \left( 1 - \frac{kp}{n} \right)^t,
\end{equation}
by slight adaptation of \citet[Eq. 10]{harkness1969occupancy}, or from identity \eqref{eq:power_is_sum_fallingfactorial_times_stirlingtwo}. Hence, the expectation and variance of the randomized occupancy distribution are given by: 
\begin{equation}
\begin{aligned}
\E\left[ Y_t \right] &= n  \left( 1 - \frac{p}{n} \right)^t, \quad  \E\left[ X_t \right] = n - n  \left( 1 - \frac{p}{n} \right)^t, \\
\V\left[ Y_t \right] &= \V\left[ X_t \right] = n \left(1- \frac{p}{n} \right)^t + n (n-1) \left(1- \frac{2p}{n} \right)^t - n^2 \left(1- \frac{p}{n} \right)^{2t}.
\end{aligned}
\end{equation}
Note that when we set $p=1$, we obtain the expectation and variance of the classical occupancy distribution, i.e. expressions \eqref{eq:expectation_classical_occupancy_price} and \eqref{eq:variance_classical_occupancy}.

\section{Chapter Summary}
In \Cref{chpt:special_case} we refined all results of \Cref{chpt:pbp_in_disc_time} for the specific parametrization of the classical (and randomized) occupancy problem(s). In contrast to the approach taken in \Cref{chpt:pbp_in_disc_time}, which employed the complete homogeneous symmetric polynomial, the main tool in \Cref{chpt:special_case} was the eigendecomposition of the transition matrix. From the eigendecomposition we derived all transition probabilities in explicit form, in terms of $r$-Stirling numbers of the second kind, and as appropriately attuned finite differences. 
In doing so, we generalized the classic and well known expressions for the probability mass function and distribution function to their initial-state conditioned versions in Propositions \ref{prop:transition_probs_classical_occupancy} and \ref{prop:cdf_from_eigendecomp}, respectively.

Furthermore, in \Cref{prop:ccdf_recur} we derived a completely new sparse linear system of recurrence relations for the complementary distribution function. We provided its matrix implementation in \Cref{prop:ccdf_recur_matrixform}.\footnote{Propositions \ref{prop:ccdf_recur} and \ref{prop:ccdf_recur_matrixform} are the natural adaptations of Propositions \ref{prop:pbp_ccdf_recur} and \ref{prop:pbp_ccdf_recur_matrixform} from \Cref{chpt:pbp_in_disc_time}.}

We treated the randomized occupancy model of \cite{harkness1969occupancy}, and showed that the Markovian perspective has relevance for that problem as well. Most evidently by answering a question raised by Harkness as to whether it is possible to simplify the expressions for the elements in the exponentiated transition matrix, $p_{r,k}^{(t)}$. We were able to answer this question affirmatively in Eq. \eqref{eq:cond_pmf_randomized_occ_harkness}.

Finally, for completeness and to unify the vast but scattered literature, the respective moments of the classical and randomized occupancy distribution were discussed. In particular, for the classical occupancy model we provided the elegant derivation of \cite{price1946multinomial}.

Appendix \ref{apx:matrix_examples} contains concrete examples of the matrices $P,\,U,\,U^{-1},\,U^{-1}\Sigma$ and $C$.

\paragraph{Discussion.}
Even though all results of this chapter can be derived using the combinatorial approach of \Cref{chpt:pbp_in_disc_time}, we took a contrasting matrix-algebraic approach here to illustrate that more than one road leads to Rome. 

We thus demonstrated the capabilities of the matrix-algebraic Markovian approach. This approach leads naturally to a generalization of the classic expressions, namely to their initial-state conditioned versions. Thereby generalizing and extending the work of \cite{uppuluri1971occupancy}, \cite{kotz_johnson1977urn_models}, \cite{harkness1969occupancy}, and \cite{feller1968ed3} on this problem.

The simple system of recurrence relations for the distribution function will perhaps find greatest practical use, for example to obtain fast and precise tail probabilities. Apart from its practicality, its matrix implementation in particular takes a strikingly simple and elegant\footnote{Caveat lector: beauty is in the eye of the beholder.} form in \Cref{prop:ccdf_recur_matrixform}.

\chapter{Conclusion and Discussion}\label{chpt:conclusion_discussion}
Our primary interest in this thesis was a property of sampling with replacement, namely: the distribution of the number of distinct elements one obtains after sampling a finite population a given number of times with replacement. This problem is known as the classical occupancy problem.

We approached this problem by asking what a purely Markovian perspective on it would yield in terms of greater understanding and new results. By Markovian perspective we mean the formulation of the problem as a discrete-time Markov chain.

We took this route because (\textit{1}) the Markovian structure is apparent, but (\textit{2}) the only paper taking this position as a starting point, \cite{uppuluri1971occupancy}, did not exhaust this vantage point to its full potential.

Given our Markovian starting position, we advanced in three directions.

\paragraph{Probability Mass Function.}
We derived the known classic expressions for the probability mass function in a new manner using the eigendecomposition of the transition matrix. We generalized these expressions to their initial-state conditioned versions, expressing these in terms of $r$-Stirling numbers, and as finite differences. We are not aware of these latter results having been obtained before.

\paragraph{Cumulative Distribution Function.}
The cumulative distribution function has historically received less attention than the probability mass function. We instead made it a focal point.

Using the eigendecomposition, we derived the distribution function immediately in its general, initial-state conditioned form. Which we compactly expressed as an appropriately evaluated finite difference. 

However, our main (practical) novelty is the system of recurrence relations that we derived for the complementary distribution function in \Cref{prop:ccdf_recur}, and its matrix form in \Cref{prop:ccdf_recur_matrixform}. With this sparse linear system, exact computation of tail probabilities for large $n$ and $t$ becomes feasible. 

We believe that these results constitute an appreciable advance in the current knowledge of the classical occupancy problem.
   
\paragraph{Generalization to Pure Birth Processes.}   
\Cref{chpt:pbp_in_disc_time} is entirely concerned with the generalization of these results to general discrete-time pure birth processes.

To understand its relation to the literature, we first precisely described the dual of the state distribution, that is in terms of each state's first hitting time. Crucially, the first hitting times can be represented as a convolution of geometric distributions.

\cite{sen1999convolution_geoms} derived the exact form of the probability mass function of this convolution for distinct geometric distributions. In the same context, \citet[Sec. 3]{chen2016conv_negbinom} generalized this result to the convolution of generic geometric distributions, removing the distinctness condition. 

We gave a new, elementary proof of both results based on a combinatorial path-counting argument combined with the complete homogeneous symmetric polynomial. 

We then extended the results of the aforementioned authors to the distribution function, for both generic and distinct transition probabilities in \Cref{thm:pbp_general_ccdf} and \Cref{prop:pbp_cdf_distinct}, respectively.

Finally, we proved in \Cref{prop:pbp_ccdf_recur} that the simple system of recurrence relations for the classical occupancy distribution holds \textit{mutatis mutandis} for the entire class of discrete-time pure birth distributions. We consider this to be a notable result. \\

\noindent We thus demonstrated that a Markovian viewpoint combined with the right mathematical instruments -- the complete homogeneous symmetric polynomial and the eigendecomposition of the transition matrix -- yields new results for a classic problem of probability theory. 

In general, we hope that our work has contributed to a better understanding of sampling with replacement and pure birth processes in discrete time. 
Additionally, we hope that our treatment has unified part of the vast but scattered literature on the exact distribution of the classical occupancy problem, and the diverse tools that have been proposed by various authors. \\  

\noindent Concluding this thesis, we wish to suggest some directions for future research. 

First, much of this work caries over to continuous-time pure birth processes. By considering a convolution of exponential distributions instead of geometric distributions, the Markov embedding becomes one in continuous time. One could ask how much of the elementary nature of our approach remains intact in continuous time.

Second, the description of the process $\mathbf{X}$ in terms of the evolution of its stages, has a dual in the collection of each state's first hitting time, say $\mathbf{T}$. We have fruitfully exploited that duality in this thesis, most evidently in the case of the distribution function. It would be interesting to see if this duality can be further exploited. For example, in terms of their moments. 

\newpage 

\phantomsection

\addcontentsline{toc}{chapter}{Bibliography}

\bibliography{thesis_bibliography}

\newpage
\chapter{Appendix}\label{chpt:apx}
\epigraph{\flushright{\emph{``All play means something."}}}{J. Huizinga}

\section{Problem Variations}\label{apx:other_variations}
The problem variations in this section arose mainly out of curiosity, but nonetheless provide a way of validating the chosen approach of this thesis. The treatment is quicker and the tone is less formal than the main text.

\subsection{The Complementary Occupancy Chain: Interchange of Transition and Stop Probabilities}
This section details the problem variation in which the stop and transition probabilities of the classical occupancy problem are interchanged.

\begin{figure}[htb]
\centering
\begin{tikzpicture}[node distance = 2.1cm,  auto] 

\node[state] 						(s1) 	{1};
\node[state, right of=s1] 			(s2) 	{2};

\node[draw = none, right of = s2] 	(s4_) 	{$\cdots$};
\node[state, right of=s4_] 			(s5) 	{$k$};
\node[state, right of=s5] 			(s6) 	{$k{+}1$};
\node[draw = none, right of = s6] 	(s7_) 	{$\cdots$}; 
\node[state, right of=s7_] 			(s8) 	{$n{-}1$};
\node[state, thick, right of=s8] 	(s9) 	{$n$};

\draw (s1) 	edge[loop below] node {$\frac{n-1}{n}$}  	(s1);
\draw (s2) 	edge[loop below] node {$\frac{n-2}{n}$} 	(s2);

\draw (s5)	edge[loop below] node {$\frac{n-k}{n}$}		(s5); 
\draw (s6) 	edge[loop below] node {$\frac{n-k-1}{n}$}	(s6); 

\draw (s8) 	edge[loop below] node {$\frac{1}{n}$} (s8);
\draw (s9) 	edge[loop below, thick] node {$1$}			 (s9);

\draw[->]  (s1) 	to node [midway, above] { $\frac{1}{n}$ } 	  	(s2);
\draw[->]  (s2) 	to node [midway, above] { $\frac{2}{n}$ } 	  	(s4_);

\draw[->]  (s4_) 	to node [midway, above] { $\frac{k-1}{n}$ }  	(s5);
\draw[->]  (s5) 	to node  				{ $\frac{k}{n}$ }    	(s6);  
\draw[->]  (s6) 	to node [midway, above] { $\frac{k+1}{n}$ }  	(s7_); 
\draw[->]  (s7_)	to node [midway, above] { $\frac{n-2}{n}$ }  	(s8); 
\draw[->]  (s8)		to node [midway, above] { $\frac{n-1}{n}$ }  	(s9);

\end{tikzpicture}
\caption{Transition diagram of the discrete-time Markov process $\widehat{X}_t$.}
\label{fig:compl_occpancy_markov_chain}
\end{figure}
Let $\widehat{X}_t$ denote the process defined by the Markov chain in \Cref{fig:compl_occpancy_markov_chain}. The corresponding transition matrix of the process $\widehat{X}_t$ equals
\begin{equation}\label{eq:matrix:1-step-transition_interchange_probs}
\begin{aligned} \widehat{P}   &=  \dfrac{1}{n} 
\begin{bmatrix*}
n-1 & 1            &                       &  	 					&   	        & 						&  \\
  &  n-2           & 2                     &  	 				    &   		    & 						&  \\
  &				   &    \small\ddots       &     \small\ddots       &   			&  						&  \\
  &                &					   &         n-k            &      k        &    			        &  \\
  &				   & 					   &						&		\small\ddots  & 		\small\ddots   	&   \\
  &				   &      				   &						&				&	1    	            & n-1 \\
  &     		   &      				   &						&				&	    	            & n  \\
\end{bmatrix*}.
\end{aligned}
\end{equation}
Formally, we record the transition matrix $\widehat{P}$ as
\begin{equation}\label{eq:compl_chain_def_transition_probs_formal}
[\widehat{P}]_{i,j}^{} = \hat{p}_{i,j}^{} = 
\begin{dcases} 
\frac{n-i}{n}   & \mbox{if }\, j = i \neq n;  \\
\frac{n}{n}   	& \mbox{if }\, j = i = n; 	  \\ 
\frac{i}{n}     & \mbox{if }\, j = i + 1; 	  \\ 
0    			& \mbox{otherwise};
\end{dcases}
\end{equation}
indexed as $1 \le i,j \le n$.

The $r$-Stirling numbers of the second kind are indispensable in this case to express the p.m.f. and c.d.f. in closed form. 

Again, treating the probability weight contributed by the transitions and the stops separately, and writing the latter's contribution in terms of $r$-Stirling numbers of the second kind, we have
\begin{equation}\label{eq:dual_pmf_stirling2_matrix_elements}
\begin{aligned}
\Pr(\widehat{X}_t = k \,\vert\, \widehat{X}_0 = r) &= \frac{1}{n^t} \left(\prod_{j=r}^{k-1} j \right) h_{t-k+r}(n-r,n-r-1,...,n-k) \\ 
												   &= \frac{1}{n^t} \frac{(k-1)!}{(r-1)!} \stirlingtwo{t - k + n}{n-k,\; n-r},
\end{aligned}
\end{equation} 
for $1 \le r \le k \le n$, $t > 0$; and $0$ otherwise. 

We obtain the unconditional p.m.f. straightforwardly from \eqref{eq:dual_pmf_stirling2_matrix_elements} by setting $r=1$. The expression becomes
\begin{equation}\label{eq:compl_occupancy_pmf_formula}
\begin{aligned}
\hat{f}(k,t,n)  	&= \frac{(k - 1)!}{n^t} h_{t-k+1}(n{-}1, n{-}2, ..., n{-}k) \\
		 			&= \frac{(k - 1)!}{n^t} \stirlingtwo{t-k+n}{n-k,\; n-1},
\end{aligned}
\end{equation}
for $1 \le k \le n$ and $t > 0$.

The complementary c.d.f. can be obtained by truncating the chain in \Cref{fig:compl_occpancy_markov_chain} by making state $k+1$ absorbing, the resulting expression for the unconditional c.d.f. ($\widehat{X}_0 = 1$) then becomes
\begin{equation}\label{eq:compl_occupancy_ccdf_formula}
\begin{aligned}
\overline{\widehat{F}}(k,t,n) 	& \overset{\text{def}}{=} \Pr(\widehat{X}_t > k \,\vert\, \widehat{X}_0 = r) 	\\
							  	&= \frac{k!}{n^t} h_{t-k}(n, n{-}1, ..., n{-}k) \\
							 	&= \frac{k!}{n^t} \stirlingtwo{t-k+n}{n-k,\; n}, 
\end{aligned}
\end{equation}
for $1 \le k < n$ and $t > 0$. The c.d.f. follows from $\widehat{F}(k,t,n) = 1 - \overline{\widehat{F}}(k,t,n)$. 

Furthermore, note the following relation:
\begin{equation}
n^t \, \overline{\widehat{F}}(k,t,n) = k! h_{t-k}(n, n{-}1, n{-}2, ..., n{-}k) = (n+1)^{t}\, \hat{f}(k+1,t,n+1),
\end{equation}
which when summed over $k$, can conveniently be used to derive the expectation of $\widehat{X}_t$ (cf. the method of \cite{price1946multinomial} in \Cref{sec:cocc_expectation_variance}).

Finally, note, with regard to the absorption probabilities in the rightmost column of the transition matrix $\widehat{P}^t$, that the elements in the second to penultimate rows are \emph{not expressible} straightforwardly in terms of $r$-Stirling numbers. To be precise, those numbers are $\hat{p}_{i,n}^{(t)} = [\widehat{P}^t]_{i,n}^{}$ with $i=2,3,...,n-1$. (These terms miss at least one integer for them to form a sequence of \emph{consecutive integers} in their numerators.)

We now proceed with a short treatment of some recurrence relations for these absorption probabilities, before concluding this section with a few words about the chain's first hitting times.

\subsubsection{Recurrence Relations for the Absorption Probabilities}
In this subsection we derive some simple recurrence relations for the elements in the rightmost column of $\widehat{P}^t$, that is for probabilities $\hat{p}_{k,n}^{(t)}$ with $k = 2, 3, ..., n-1$, as these are not trivial. \newline

\noindent We begin by noting that $\hat{p}_{1,n}^{(t+1)}$ can be expressed in terms of $\hat{p}_{1,n}^{(t)}$ and $\hat{p}_{2,n}^{(t)}$ as follows
\begin{equation}
\hat{p}_{1,n}^{(t+1)} = \hat{p}_{1,1}^{\phantom{()}} \hat{p}_{1,n}^{(t)} \,+\, \hat{p}_{1,2}^{\phantom{()}} \hat{p}_{2,n}^{(t)}.
\end{equation}
The left hand side can be also be expressed as
\begin{equation}
\hat{p}_{1,n}^{(t+1)} =  \hat{p}_{1,n}^{(t)} \,+\, \hat{p}_{n-1,n}^{} \hat{p}_{1,n-1}^{(t)}.
\end{equation}
Equating these expressions and isolating $\hat{p}_{2,n}^{(t)}$, we obtain the expression 
\begin{equation}
\begin{aligned}
\hat{p}_{2,n}^{(t)}	
                    &= \hat{p}_{1,n}^{(t)} \,+\,  (n-1) \hat{p}_{1,n-1}^{(t)},	
\end{aligned}
\end{equation}
after some manipulation. This reasoning generalizes to all states ${k = 2, 3, ..., n}$. Hence the following lemma.
\begin{lemma}\label{lem:recur_k_dual_chain} Denote by $\hat{p}_{i,j}$ the transition probabilities of the Markov chain defined in equation \eqref{eq:compl_chain_def_transition_probs_formal}; then the recurrence relation,
\begin{equation}\label{eq:dual_general_recur_in_statement}
\hat{p}_{k,n}^{(t)} = \hat{p}_{k-1,n}^{(t)} + \frac{n-1}{k-1} \hat{p}_{k-1,n-1}^{(t)}
\end{equation}
holds true for all $k = 2,3,...,n$, and $t \ge 1$.
\end{lemma}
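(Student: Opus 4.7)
The plan is to mimic the argument the authors gave for the $k=2$ case, but applied to state $k-1$ instead of state $1$. The key observation is that we can write $\hat{p}_{k-1,n}^{(t+1)}$ in two different ways using the Chapman--Kolmogorov equations: once by conditioning on the very first step out of $k-1$, and once by conditioning on the very last step into $n$. Equating the two expressions and solving for $\hat{p}_{k,n}^{(t)}$ will deliver the recurrence.

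More concretely, I would first apply one-step Chapman--Kolmogorov \emph{from the left}: since from state $k-1$ the chain $\widehat{X}_t$ can only stay in $k-1$ or transition to $k$, we get
\begin{equation*}
\hat{p}_{k-1,n}^{(t+1)} = \hat{p}_{k-1,k-1}^{\phantom{()}} \hat{p}_{k-1,n}^{(t)} + \hat{p}_{k-1,k}^{\phantom{()}} \hat{p}_{k,n}^{(t)} = \tfrac{n-k+1}{n}\hat{p}_{k-1,n}^{(t)} + \tfrac{k-1}{n}\hat{p}_{k,n}^{(t)}.
\end{equation*}
Then I apply one-step Chapman--Kolmogorov \emph{from the right}: since $n$ is absorbing and only reachable from $n-1$, we get
\begin{equation*}
\hat{p}_{k-1,n}^{(t+1)} = \hat{p}_{k-1,n}^{(t)}\hat{p}_{n,n}^{\phantom{()}} + \hat{p}_{k-1,n-1}^{(t)}\hat{p}_{n-1,n}^{\phantom{()}} = \hat{p}_{k-1,n}^{(t)} + \tfrac{n-1}{n}\hat{p}_{k-1,n-1}^{(t)}.
\end{equation*}
Equating the two right-hand sides, isolating $\hat{p}_{k,n}^{(t)}$ and simplifying (the coefficient of $\hat{p}_{k-1,n}^{(t)}$ collapses neatly because $1 - \tfrac{n-k+1}{n} = \tfrac{k-1}{n}$), we recover precisely the stated recurrence \eqref{eq:dual_general_recur_in_statement}. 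The division by $\tfrac{k-1}{n}$ in the final step is exactly why the hypothesis $k \ge 2$ is needed.

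There is no real obstacle here: once the ``forward vs.\ backward one-step decomposition'' idea is identified, everything reduces to a short algebraic simplification. The only care to take is in justifying the sparsity of both sums (i.e.\ that only two terms survive in each Chapman--Kolmogorov expansion), which is immediate from the bidiagonal structure of $\widehat{P}$ recorded in \eqref{eq:compl_chain_def_transition_probs_formal}, and in verifying that the recurrence is valid at the boundary $k=n$, which can be checked directly since both sides equal $1$ (with $\hat{p}_{n-1,n}^{(t)} + \hat{p}_{n-1,n-1}^{(t)}=1$ by absorption).
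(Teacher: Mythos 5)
Your proposal is correct and follows essentially the same route as the paper's proof: decompose $\hat{p}_{k-1,n}^{(t+1)}$ once via the first step out of state $k-1$ and once via the last step into the absorbing state $n$, equate, and divide by $\hat{p}_{k-1,k} = \tfrac{k-1}{n}$. The explicit numerical coefficients and the remark on the $k=n$ boundary are harmless additions; the substance matches the paper exactly.
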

\begin{proof} Take any $k \in \{2,3,..., n\}$. 
\begin{itemize}

\item On the one hand, we have
\begin{equation}
\hat{p}_{k-1,n}^{(t+1)}	= \hat{p}_{k-1,k-1}^{\phantom{()}} \hat{p}_{k-1,n}^{(t)} + \hat{p}_{k-1,k}^{\phantom{()}} \hat{p}_{k,n}^{(t)}.
\end{equation}
\item On the other hand, it holds true that
\begin{equation}
\hat{p}_{k-1,n}^{(t+1)} = \hat{p}_{k-1,n}^{(t)} + \hat{p}_{n-1, n}^{\phantom{()}} \hat{p}_{k-1,n-1}^{(t)}.
\end{equation}
\item Hence, when we equate the right-hand sides of these expressions and isolate $\hat{p}_{k,n}^{(t)}$, the desideratum is obtained as follows
\begin{equation}\label{eq:dual_general_recur_in_proof}
\begin{aligned}
\hat{p}_{k,n}^{(t)} &=  \frac{  ( 1 - \hat{p}_{k-1,k-1}^{} ) \hat{p}_{k-1,n}^{(t)} + \hat{p}_{n-1, n}^{} \hat{p}_{k-1,n-1}^{(t)}  } {{\hat{p}_{k-1,k}^{}}}    	\\
				    &=  \frac{  \hat{p}_{k-1,k}^{} \hat{p}_{k-1,n}^{(t)} + \hat{p}_{n-1, n}^{} \hat{p}_{k-1,n-1}^{(t)}  } {{\hat{p}_{k-1,k}^{}}} \\
				    &=  \hat{p}_{k-1,n}^{(t)}   +  \frac{\hat{p}_{n-1, n}^{}}{\hat{p}_{k-1,k}^{}} \hat{p}_{k-1,n-1}^{(t)}  \\
				    &=  \hat{p}_{k-1,n}^{(t)} + \frac{n-1}{k-1} \hat{p}_{k-1,n-1}^{(t)}.	    
\end{aligned}
\end{equation}
\end{itemize}
\end{proof}
Lemma~\autoref{lem:recur_k_dual_chain} provides a simple recurrence relation, which expresses the conditional absorption probability, $\hat{p}_{k,n}^{(t)}$, in terms of $\hat{p}_{k-1,n}^{(t)}$, and a a probability that can be readily read-off from equation \eqref{eq:dual_pmf_stirling2_matrix_elements}, $\hat{p}_{k-1,n-1}^{(t)}$. 

We may alternatively express this recurrence relation as in the following corollary.
\begin{corollary}\label{lem:dual_cond_prob_partial_sum}
\begin{equation}\label{eq:dual_cond_prob_partial_sum}
\hat{p}_{k,n}^{(t)} = \hat{p}_{1,n}^{(t)} + (n-1) \sum_{j=1}^{k-1} \frac{1}{j} \hat{p}_{j,n-1}^{(t)},
\end{equation}
\textnormal{for all $k = 1,2,...,n$ and $t \ge 1$.}
\end{corollary}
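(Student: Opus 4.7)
The plan is to obtain the corollary by telescoping the recurrence relation of \Cref{lem:recur_k_dual_chain}. Specifically, the lemma gives
\[
\hat{p}_{k,n}^{(t)} - \hat{p}_{k-1,n}^{(t)} = \frac{n-1}{k-1}\, \hat{p}_{k-1,n-1}^{(t)},
\]
which is a first-order difference equation in $k$ for the sequence $\{\hat{p}_{k,n}^{(t)}\}_{k=1}^{n}$ with a forcing term that is already known (it involves only the values in the population of size $n-1$). Hence a direct summation on $k$ should do the job.

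The natural route is induction on $k$. The base case $k=1$ is trivial, since the claimed identity reduces to $\hat{p}_{1,n}^{(t)}=\hat{p}_{1,n}^{(t)}$ (the sum is empty). For the inductive step, assume the formula holds for some $k\ge 1$. Then \Cref{lem:recur_k_dual_chain} (applied at index $k+1$) together with the induction hypothesis yields
\[
\hat{p}_{k+1,n}^{(t)} = \hat{p}_{k,n}^{(t)} + \frac{n-1}{k}\hat{p}_{k,n-1}^{(t)} = \hat{p}_{1,n}^{(t)} + (n-1)\sum_{j=1}^{k-1}\frac{1}{j}\hat{p}_{j,n-1}^{(t)} + \frac{n-1}{k}\hat{p}_{k,n-1}^{(t)},
\]
and the last two terms combine to $(n-1)\sum_{j=1}^{k}\frac{1}{j}\hat{p}_{j,n-1}^{(t)}$, completing the induction.

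Equivalently, one could avoid induction altogether and simply telescope: summing the difference equation from index $2$ up to $k$ gives $\hat{p}_{k,n}^{(t)}-\hat{p}_{1,n}^{(t)}=(n-1)\sum_{j=1}^{k-1}\frac{1}{j}\hat{p}_{j,n-1}^{(t)}$ at once. I do not anticipate any genuine obstacle here; the only bookkeeping concerns are to check the boundary values ($k=1$ giving an empty sum) and to note that no condition on $t$ beyond $t\ge 1$ is needed, since \Cref{lem:recur_k_dual_chain} already guarantees this range of validity.
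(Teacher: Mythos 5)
Your proposal is correct and is essentially the paper's own argument: the paper proves the corollary by ``unravelling'' the recurrence of Lemma~\ref{lem:recur_k_dual_chain} step by step and re-indexing the resulting sum, which is exactly your telescoping (equivalently, your induction on $k$). The bookkeeping you note --- the empty sum at $k=1$ and the range $t\ge 1$ inherited from the lemma --- is all that is needed.
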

\begin{proof} Idea: unravel equation~\eqref{eq:dual_general_recur_in_statement} of Lemma~\ref{lem:recur_k_dual_chain} step-by-step. That is, start with
\[
\hat{p}_{k,n}^{(t)} = \hat{p}_{k-1,n}^{(t)} + \frac{n-1}{k-1} \hat{p}_{k-1,n-1}^{(t)},
\]
and keep substituting into the first term on the right-hand side of the expression, each time with $k$ reduced by one. Finally, re-index the summation.
\end{proof}

\subsubsection{First Hitting Times}
The first hitting time of state $k$, $\widehat{T}_k$, of the complementary occupancy process, $\widehat{X}_t$, can be represented as a sum of independent geometrically distributed random variables, as follows
\begin{equation}\label{eq:first_hit_time_dual_chain_geo_repr}
\widehat{T}_k = G_{\frac{1}{n}} + G_{\frac{2}{n}} + \cdots  + G_{\frac{k-1}{n}},
\end{equation}
with each summand geometrically distributed with success (i.e. transition) probability $p$, i.e. $G_{p} \sim \text{Geo}(p)$, $0 < p \le 1$. From this representation, we immediately obtain its expectation and variance as:
\begin{align}\label{eq:dual_chain_1st_hit_time_expectation_and_variance}
\E[\widehat{T}_k] &= n \left( 1 + \frac{1}{2} + \cdots + \frac{1}{k-1} \right) = n \sum_{j=1}^{k-1} \frac{1}{j} = n H_{k-1}^{}\,. \\
\V[\widehat{T}_k] &= n \left( \frac{n-1}{1^2} + \frac{n-2}{2^2} + \cdots + \frac{n-k+1}{(k-1)^2} \right) = n\sum_{j=1}^{k-1} \dfrac{n-j}{j^2}.
\end{align} \label{eq:dual_chain_1st_hit_time_variance}

\subsection{The Binomial Distribution}\label{apx:binom_distribution} 
In this section we discuss the distribution that arises when we sample uniformly at random (i.e. with equiprobability) from the power set of the population $[n]$, denoted $\mathcal{P}([n])$, and defined as the collection of all subsets of $[n]$.

The universe $\mathcal{P}([n])$ has cardinality $2^n$. $\mathcal{P}([n])$ contains $\binom{n}{k}$ different $k$-subsets of $[n]$. The identity
\begin{equation}
\sum_{k=0}^{n} \binom{n}{k} = 2^n
\end{equation}
is well known (and follows directly from the Binomial Theorem). Hence, under uniform-random sampling from $\mathcal{P}([n])$, the distribution of the size of the thus obtained subset is given by
\begin{equation}\label{eq:pmf_binom}
 f(k,n) = \dfrac{\binom{n}{k}}{2^n}, \quad 0 \le k \le n.
\end{equation}

Notice that this description is essentially equivalent to a binomial distribution with $k$ ``successes" in $n$ trials, with each success having probability $p = \sfrac{1}{2}$, that is: ${f(k,n) = \binom{n}{k}(\sfrac{1}{2})^k(\sfrac{1}{2})^{n-k}}$. 

Consequently, were we to view this as a \emph{sequential} sampling process, where $X_t$ represents the number of ``successes" that have occurred upto time $t$, then the corresponding Markov chain is defined by the following $n \times n$ transition matrix
\begin{equation}\label{eq:transition_matrix_swor}
\begin{aligned}
P =  \frac{1}{2} 
\begin{bmatrix}
1 & 1              &                       &  	 					&   	        & 						&   \\
  &  1             & 1                     &  	 				    &   		    & 						&   \\
  &				   &    \small\ddots       &     \small\ddots       &   			&  						&   \\
  &                &					   &          1             &      1        &    			        &   \\
  &				   & 					   &						&		\small\ddots  & 		\small\ddots   		&   \\
  &				   &      				   &						&				&	1    	            & 1 \\
  &				   &      		           &						&				&	    		 		& 2
\end{bmatrix},
\end{aligned}
\end{equation}
and formally by
\begin{equation} 
[P]_{i,j}^{} = \frac{1}{2} \times
\begin{dcases}
1 & \text{if } j = i \neq n;  \\ 
2 & \text{if } j = i = n;     \\
1 & \text{if } j = i+1;       \\
0 & \text{otherwise}.
\end{dcases}
\end{equation}
Now note -- in analogy with \Cref{prop:pbp_ccdf_recur_matrixform} in \Cref{chpt:pbp_in_disc_time} -- that exponentiating the matrix
\begin{equation}
C =  \frac{1}{2} 
\begin{bmatrix}
2 &  1             &                       &  	 					&   	        & 						&   \\
  &  1             & 1                     &  	 				    &   		    & 						&   \\
  &				   &    \small\ddots       &     \small\ddots       &   			&  						&   \\
  &                &					   &          1             &      1        &    			        &   \\
  &				   & 					   &						&		\small\ddots  & 		\small\ddots   		&   \\
  &				   &      				   &						&				&	1    	            & 1 \\
  &				   &      		           &						&				&	    		 		& 1
\end{bmatrix},
\end{equation}
generates in its topmost row the complementary c.d.f. of the probability distribution defined in Eq. \eqref{eq:pmf_binom}. 

\subsubsection{Partial Sums of Binomial Coefficients}
Let $S(k,n) \overset{\text{def}}{=} \sum_{j=0}^k \binom{n}{j}$ denote the partial sum of the first $k+1$ binomial coefficients. And likewise, let
\begin{equation}
\overline{S}(k,n) \overset{\text{def}}{=} \sum_{j=k+1}^n \binom{n}{j} = 2^n - \sum_{j=0}^{k} \binom{n}{j}
\end{equation}
denote the complementary partial sum, that is the partial sum of the ``last" $n-k$ binomial coefficients, then $\overline{S}(k,n)$ obeys the linear recurrence relations:
\begin{equation}\label{eq:partial_compl_sum_binomial_coeff_recurrence}
\overline{S}(k,n) = \begin{dcases}
 2 \, \overline{S}(k,n-1)                       & \text{if } k = 0;     \\
\overline{S}(k,n-1) \,+\, \overline{S}(k-1,n-1) & \text{if } 1 < k < n; \\
 1 \,                                           & \text{if } k = n;  
\end{dcases}
\end{equation}
with $\overline{S}(0,0) = 2^0 = 1$.

In other words, the complementary partial sum, $\overline{S}(k,n)$, satisfies the exact same recurrence relation as its summands (the binomial coefficients), with only exception its boundary $k=0$ for $n \ge 1$. 

\begin{proof}[Proof by visual inspection.]
Recall Pascal's triangle:
\begin{equation}
\centering
\begin{tabular}{>{$n=}l<{$\hspace{12pt}}*{15}{c}}
0: &&&&&&&&1&&&&&&&\\
1: &&&&&&&1&&1&&&&&&\\
2: &&&&&&1&&2&&1&&&&&\\
3: &&&&&1&&3&&3&&1&&&&\\
4: &&&&1&&4&&6&&4&&1&&&\\
5: &&&1&&5&&10&&10&&5&&1&&\\
6: &&1&&6&&15&&20&&15&&6&&1&\\
7: &1&&7&&21&&35&&35&&21&&7&&1
\end{tabular}
\end{equation}
Now observe the first few rows of the \emph{complementary partial sum} triangle:
\begin{equation}\label{eq:partial_sum_pascal_triangle}
\centering
\begin{tabular}{>{$n=}l<{$\hspace{15pt}}*{15}{c}}
0: &&&&&&&&1&&&&&&&\\
1: &&&&&&&2&&1&&&&&&\\
2: &&&&&&4&&3&&1&&&&&\\
3: &&&&&8&&7&&4&&1&&&&\\
4: &&&&16&&15&&11&&5&&1&&&\\
5: &&&32&&31&&26&&16&&6&&1&&\\
6: &&64&&63&&57&&42&&22&&7&&1&\\
7: &128&&127&&110&&99&&64&&29&&8&&1
\end{tabular}
\end{equation}
From this latter triangle we see that each element is the sum of the ones above it, just as we have for Pascal's triangle (middle condition of \eqref{eq:partial_compl_sum_binomial_coeff_recurrence}). The left and right borders are also easily verified to be powers of two (top condition), or identical to one (bottom condition). Moreover, the difference of two adjacent terms in the same row equals the corresponding element in Pascal's triangle.
\end{proof}

When the triangle in \eqref{eq:partial_sum_pascal_triangle} is read from right to left, one finds that for the partial sum $S(k,n)$ itself a similar result holds:
\begin{equation}
S(k,n) = \begin{dcases}
 1                          & \text{if } k = 0; \\
 S(k,n-1) \,+\, S(k-1,n-1)  & \text{if } 1 < k < n; \\
 2S(k,n-1)                  & \text{if } k = n; 
\end{dcases}
\end{equation}
with $S(0,0) = 2^0 = 1$.

\subsubsection{State-Phase Duality}
Finally, let us note the form that the state-phase duality (\Cref{sec:pbp_first_hit_times}) takes for this problem. We may in this case consider a truncated variant of the Markov chain in Eq. \eqref{eq:transition_matrix_swor} with an absorbing state at any ${j = 1,2,...,n-1}$ instead of $n$. The resulting expressions for the complementary c.d.f. then become
\begin{equation}
\begin{aligned}
\Pr(X_n > k)  	  &= \frac{1}{2^n}\sum_{j=k+1}^n \binom{n}{j}  					& \qquad \text{(by definition on state space)} \\
				  &= \frac{1}{2} \sum_{j=k}^{n-1} \frac{1}{2^j}\binom{j}{k},     & \qquad \text{(as hitting time)}
\end{aligned}
\end{equation}
and equivalently for the complementary partial sums:
\begin{equation}
\overline{S}(k,n) = \sum_{j=k+1}^n \binom{n}{j} = \frac{1}{2} \sum_{j=k}^{n-1} {2^{n-j}}\binom{j}{k} .
\end{equation}

\section{Some Proofs}\label{apx:proofs}
This section contains some proofs that were left out of the main text. For completeness they are provided here.

\subsection{Deferred Proofs of Results}
Proof of \Cref{prop:pbp_cdf_recur} (\Cref{chpt:pbp_in_disc_time}).
\begin{proposition*}[CDF Recurrence]
The distribution function of any pure birth process $X_t$ induced by the vector of transition probabilities, $\mathbf{p}_{n+1}$, with $0 < p_i < 1$ ($i=0,1,...,n-1$), obeys the recurrence relations
\begin{equation}
F(k,t+1,\mathbf{p}_{n+1}) = 
\begin{dcases}
p_{k}^{} F(k-1,t,\mathbf{p}_{n+1}) + (1 - p_{k}^{}) F(k,t,\mathbf{p}_{n+1}) & \textnormal{if } \;  k < t;  \\
1 																			& \textnormal{if }  \; k \ge t;  
\end{dcases}
\end{equation}
for $1 \le k \le n$ and $t > 0$.
\end{proposition*}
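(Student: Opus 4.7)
The plan is to follow almost verbatim the proof of Proposition \ref{prop:pbp_ccdf_recur}, with the complementary distribution function replaced by the distribution function throughout. For the boundary case, observe that because the process increases by at most one per time-step starting from $X_0 = 0$, the state $X_{t+1}$ is bounded above by $t+1$; thus whenever $k$ is sufficiently large, $\{X_{t+1} \le k\}$ occurs almost surely, giving $F(k, t+1, \mathbf{p}_{n+1}) = 1$.

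For the recurrence case ($k < t$), I would combine two complementary identities. The first is the state-direction identity $f(k, t, \mathbf{p}_{n+1}) = F(k, t, \mathbf{p}_{n+1}) - F(k-1, t, \mathbf{p}_{n+1})$, which is just the defining relation between PMF and CDF. The second is the time-direction identity
\begin{equation*}
F(k, t+1, \mathbf{p}_{n+1}) = F(k, t, \mathbf{p}_{n+1}) - p_k \, f(k, t, \mathbf{p}_{n+1}),
\end{equation*}
which expresses that the probability mass in the states $\{0, 1, \dots, k\}$ can only leak out to state $k+1$ in a single step, and that this leakage happens with conditional rate $p_k$ given that the process currently occupies state $k$. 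Formally, this identity is the mirror of Eq. \eqref{eq:proof:pbp_ccdf_time_forward}, and follows from the same state-phase duality relations \eqref{eq:pbp_pmf_phase_in_terms_of_stage} and \eqref{eq:pbp_cdf_phase_equiv_stage}, since $F(k, t+1, \mathbf{p}_{n+1}) - F(k, t, \mathbf{p}_{n+1}) = -\Pr(T_{k+1} = t+1) = -p_k\,f(k, t, \mathbf{p}_{n+1})$.

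Substituting the first identity into the second and rearranging terms yields
\begin{equation*}
F(k, t+1, \mathbf{p}_{n+1}) = p_k\, F(k-1, t, \mathbf{p}_{n+1}) + (1 - p_k)\, F(k, t, \mathbf{p}_{n+1}),
\end{equation*}
as desired. I do not anticipate any genuine obstacle; the argument is a line-by-line transcription of the complementary case, with signs flipped in the time-direction identity to account for mass leaving $\{X_t \le k\}$ rather than entering $\{X_t > k\}$. A one-line alternative is to write $F = 1 - \overline{F}$ and substitute into Proposition \ref{prop:pbp_ccdf_recur}: the constant terms cancel because $p_k + (1-p_k) = 1$, and the stated recurrence drops out immediately.
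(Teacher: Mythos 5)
Your proposal is correct and follows essentially the same route as the paper's own proof (deferred to Appendix~\ref{apx:proofs}): the identical pair of identities --- the state-direction difference $f(k,t,\mathbf{p}_{n+1}) = F(k,t,\mathbf{p}_{n+1}) - F(k-1,t,\mathbf{p}_{n+1})$ and the time-direction leakage $F(k,t+1,\mathbf{p}_{n+1}) = F(k,t,\mathbf{p}_{n+1}) - p_k^{}\, f(k,t,\mathbf{p}_{n+1})$ --- substituted into one another and rearranged. The one-line alternative you mention, substituting $F = 1 - \overline{F}$ into Proposition~\ref{prop:pbp_ccdf_recur} and cancelling the constants via $p_k^{} + (1-p_k^{}) = 1$, is equally valid and arguably cleaner.
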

\begin{proof}\label{prop:pbp_cdf_recur_proof}
Starting with the latter condition ($k \ge t$): in this case the distribution function must equal $1$ as no path has reached any state greater than $t$.

For the first condition ($k \le t$), we obtain two different expressions: one for the p.m.f., $f(k,t,\mathbf{p}_{n+1})$, in terms of the complementary c.d.f.\,; the other for the complementary c.d.f. when time is incremented by one unit. Upon combining these expressions the recurrence relation is obtained.
\begin{itemize}
\item By definition we have
\begin{equation}\label{eq:proof:pbp_pmf_def_as_ccdf_difference_apx}
f(k,t,\mathbf{p}_{n+1}) = F(k,t,\mathbf{p}_{n+1}) - F(k-1,t,\mathbf{p}_{n+1}).
\end{equation}
\item  On the other hand, we have
\begin{equation}\label{eq:proof:pbp_ccdf_time_forward_apx}
\begin{aligned}
F(k,t+1,\mathbf{p}_{n+1}) = F(k,t,\mathbf{p}_{n+1}) - p_{k}^{} f(k,t,\mathbf{p}_{n+1}), 
\end{aligned}
\end{equation}
as a consequence of relations \eqref{eq:pbp_pmf_phase_in_terms_of_stage} and \eqref{eq:pbp_cdf_phase_equiv_stage}.
\item Hence, on substituting equation \eqref{eq:proof:pbp_pmf_def_as_ccdf_difference_apx} into \eqref{eq:proof:pbp_ccdf_time_forward_apx}, we obtain
\begin{equation}
\overline{F}(k,t+1,\mathbf{p}_{n+1}) =  p_{k}^{} \overline{F}(k-1,t,\mathbf{p}_{n+1}) + (1 - p_{k}^{})\overline{F}(k,t,\mathbf{p}_{n+1}),
\end{equation}
as desired.
\end{itemize}
\end{proof}

In the same spirit we prove \Cref{prop:radomized_occ_ccdf_recur} (\Cref{sec:rand_occ_model}) here for the complementary c.d.f. of the \emph{randomized} occupancy model.
\begin{proposition*}[Complementary CDF: Recurrence]
The complementary cumulative distribution function of the randomized occupancy problem, $\overline{F}(k,t;p,n)$, obeys the recurrence relations 
\begin{equation}
\overline{F}(k,t+1;p,n) = 
\begin{dcases}
\frac{(n-k)p}{n}\overline{F}(k-1,t;p,n) + \frac{n - (n-k)p}{n} \overline{F}(k,t;p,n) & \textnormal{if } \;  k \le t;  \\
0 & \textnormal{if }  \; k > t.  
\end{dcases}
\end{equation}
for $1 \le k \le n$, and $t > 0$.
\end{proposition*}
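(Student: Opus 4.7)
The plan is to mirror, verbatim in structure, the proof of \Cref{prop:pbp_ccdf_recur} from \Cref{chpt:pbp_in_disc_time}, since the randomized occupancy chain is just the pure birth process with transition probabilities $p_k = (n-k)p/n$ and stop probabilities $q_k = [n - (n-k)p]/n$ as recorded in \eqref{eq:randomized_occupancy_transition_prob} and \eqref{eq:randomized_occupancy_stop_prob}. Thus every lemma I need is already in place: the state-phase duality of \Cref{sec:pbp_first_hit_times}, and in particular relations \eqref{eq:pbp_pmf_phase_in_terms_of_stage} and \eqref{eq:pbp_cdf_phase_equiv_stage}, holds for this chain just as for any discrete-time pure birth process.

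First I will dispose of the trivial branch $k > t$. Since the process starts in state $0$ and can increase its state by at most one per time step, it cannot reach any state larger than $t$ by time $t$, hence $\overline{F}(k,t;p,n) = \Pr(X_t > k) = 0$ whenever $k > t$.

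Next, for $k \le t$, I will derive two expressions and equate them. By definition in the state space,
\begin{equation*}
f(k,t;p,n) = \overline{F}(k-1,t;p,n) - \overline{F}(k,t;p,n).
\end{equation*}
In the time domain, using \eqref{eq:pbp_pmf_phase_in_terms_of_stage} and \eqref{eq:pbp_cdf_phase_equiv_stage} applied to the randomized chain with transition probability $p_k = (n-k)p/n$, I get
\begin{equation*}
\overline{F}(k,t+1;p,n) = \overline{F}(k,t;p,n) + \tfrac{(n-k)p}{n}\, f(k,t;p,n),
\end{equation*}
since the only way new probability flows beyond state $k$ in one additional time step is via the transition $k \to k{+}1$. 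Substituting the first equation into the second and grouping terms yields
\begin{equation*}
\overline{F}(k,t+1;p,n) = \tfrac{(n-k)p}{n}\,\overline{F}(k-1,t;p,n) + \tfrac{n-(n-k)p}{n}\,\overline{F}(k,t;p,n),
\end{equation*}
which is exactly the claimed recursion.

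There is essentially no obstacle here; the only thing to be careful about is correctly identifying the transition probability out of state $k$ as $(n-k)p/n$ (\emph{not} $(n-k+1)p/n$ or similar off-by-one variants), and correspondingly the stop probability as $1 - (n-k)p/n = [n - (n-k)p]/n$. Once these substitutions are made, the argument is a line-for-line specialization of the general pure birth proof, and the classical occupancy case of \Cref{prop:ccdf_recur} is recovered by setting $p = 1$.
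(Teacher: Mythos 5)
Your proof is correct and follows essentially the same route as the paper's: the $k>t$ branch by the one-step-at-a-time growth of the chain, and the $k\le t$ branch by equating the state-space identity $f(k,t;p,n)=\overline{F}(k-1,t;p,n)-\overline{F}(k,t;p,n)$ with the time-domain update $\overline{F}(k,t+1;p,n)=\overline{F}(k,t;p,n)+p_k f(k,t;p,n)$, with $p_k=(n-k)p/n$. The only cosmetic slip is that in the trivial branch you argue $\overline{F}(k,t;p,n)=0$ where the proposition asserts $\overline{F}(k,t+1;p,n)=0$; the same reasoning ($k>t$ implies $k\ge t+1\ge X_{t+1}$) gives the required conclusion directly.
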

\begin{proof}
Starting with the second case, $k > t$: it is impossible for the Markov process to reach any state greater than $t$, hence this (cumulative) probability is equal to zero.

For the other case, $k \le t$, we obtain two different expressions involving the p.m.f. in terms of the c.d.f., and substitute one into the other.

\begin{itemize}
\item We have by definition
\begin{equation}
f(k,t;p,n) = \overline{F}(k-1,t;p,n) - \overline{F}(k,t;p,n).
\end{equation}

\item On the other hand, we have
\begin{equation}
\overline{F}(k,t+1;p,n) = \overline{F}(k,t;p,n) + p_{k}^{} f(k,t;p,n).
\end{equation}

\item Substituting the first equation into the second and rearranging terms, we obtain
\begin{equation}
\begin{aligned}
\overline{F}(k,t+1;p,n) &= p_{k}^{}\overline{F}(k-1,t;p,n) + (1 - p_{k}^{}) \overline{F}(k,t;p,n) 		\\
						&= \frac{(n-k)p}{n}\overline{F}(k-1,t;p,n) + \frac{n - (n-k)p}{n} \overline{F}(k,t;p,n),  
\end{aligned}
\end{equation}
as desired.
\end{itemize}
\end{proof}

\newpage
\subsection{Other Results and Proofs}
A short collection of interesting results with proofs, which could not be included in the main text.

\subsubsection{Hitting Time Derivation of the Classical Occupancy Distribution Function}
As we saw in \Cref{sec:pbp_first_hit_times}, when deriving the distribution function of $X_t$ it is convenient to utilize its dual, that is the first hitting time distribution (in particular: relations \eqref{eq:pbp_cdf_phase_equiv_stage},  \eqref{eq:pbp_ccdf_first_hit_time_as_finite_sum_of_stage_pmf}). We shall  mimic the proof of \Cref{thm:pbp_general_ccdf} (\Cref{chpt:pbp_in_disc_time}). 

Let us recall some facts from that section in an appropriately adapted form for the classical occupancy problem. Let the process $X_t$ start in any state $0 \le r < k$, then we have for the first hitting time of state $k$ that
\begin{equation}
T_{r:k} \overset{\text{def}}{=} G_{\frac{n-r}{n}} + G_{\frac{n-1}{n}} + ... + G_{\frac{n-k-1}{n}},
\end{equation}
and
\begin{equation}
\Pr(T_{r:k{+}1} > t) = \Pr(X_t \le k \,\vert\, X_0 = r).
\end{equation}
Furthermore, we have
\begin{equation}\label{eq:occ_1st_hit_is_state}
\Pr(T_{r:k{+}1} = t) = \frac{n-k}{n} \Pr(X_{t-1} = k \,\vert\, X_0 = r).
\end{equation}
With these expressions in hand, we are ready to state the following result.
\begin{proposition}[Occupancy Distribution]\label{prop:cdf_from_geom_telescope}
The initial-state conditioned cumulative distribution function of the classical occupancy problem is given by
\begin{equation}\label{eq:cdf_from_geom_telescope}
\begin{aligned}
\Pr(X_t \le k \,\vert\, X_0 = r) &= \frac{n-k}{n^t} \binom{n-r}{n-k} \sum_{j=0}^{k-r} (-1)^{k-r-j}  \binom{k-r}{j} \frac{(r+j)^t}{n-r-j} \\
\end{aligned}
\end{equation}
for all integers $0 \le r \le k < n$, $0 \le t$; $\Pr(X_0 \le n \,\vert\, X_0 = k) = 1$ for all $0 \le k \le n$; and $0$ otherwise.
\end{proposition}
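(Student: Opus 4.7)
The plan is to mimic the proof of \Cref{thm:pbp_general_ccdf}/\Cref{prop:pbp_cdf_distinct} in \Cref{chpt:pbp_in_disc_time}, exploiting the state-phase duality that was just recalled. Concretely, I would start from the identity $\Pr(X_t \le k \,\vert\, X_0 = r) = \Pr(T_{r:k+1} > t)$, then represent the right-hand side as an infinite tail sum of the hitting time mass function, and rewrite each hitting time mass in terms of a state probability using \eqref{eq:occ_1st_hit_is_state}. This gives
\begin{equation*}
\Pr(X_t \le k \,\vert\, X_0 = r) = \sum_{d=t+1}^{\infty} \Pr(T_{r:k+1} = d) = \frac{n-k}{n} \sum_{d=t}^{\infty} \Pr(X_d = k \,\vert\, X_0 = r).
\end{equation*}

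Next I would plug in the explicit form of the initial-state conditioned p.m.f. from \Cref{prop:transition_probs_classical_occupancy}, namely expression \eqref{eq:explicit_from_eigendecomp}, and swap the order of the two summations. The inner sum then becomes a pure geometric series in $d$ with common ratio $(r+j)/n$. Since $0 \le r \le j+r \le k < n$, this ratio is strictly less than $1$, so the series converges and summing it yields the factor $n/(n-r-j)$ after multiplying numerator and denominator by $n$. Collecting constants ($n-k$, the powers of $n$, and the binomial coefficient $\binom{n-r}{n-k}$) out of the remaining finite sum produces exactly the right-hand side of \eqref{eq:cdf_from_geom_telescope}.

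The boundary cases are immediate: for $r = k$, the trivial stopping $T_{r:r+1}$ equals $G_{(n-r)/n}$, so the formula reduces to the geometric tail probability (and in particular $\Pr(X_0 \le n \,\vert\, X_0 = k) = 1$ follows from the normalization of $P^0 = I$); while for $k > n$ or $r > k$ the statement degenerates trivially.

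The only genuine obstacle I anticipate is the algebraic bookkeeping after the interchange of summations and the geometric-series collapse: one must carefully keep track of the indices, the sign $(-1)^{k-r-j}$, and the factor of $n$ that is absorbed into $n-r-j$ to match the target expression. Interchange of the two sums is harmless because, for each fixed $j$, the series in $d$ converges absolutely (all terms are nonnegative up to the $j$-dependent sign, and $\sum_d |(r+j)/n|^d < \infty$), so Tonelli/Fubini applies termwise. No use of the eigendecomposition or of Sylvester's identity is needed; the proof is purely a telescoping of geometric series, giving an alternative derivation to the matrix-algebraic one of \Cref{prop:cdf_from_eigendecomp}.
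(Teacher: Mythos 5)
Your proposal is correct and follows essentially the same route as the paper's own proof: both start from $\Pr(X_t \le k \,\vert\, X_0 = r) = \Pr(T_{r:k+1} > t)$, rewrite the hitting-time tail via Eq.~\eqref{eq:occ_1st_hit_is_state}, substitute the conditional p.m.f. of Eq.~\eqref{eq:explicit_from_eigendecomp}, interchange the summations, and collapse the geometric series with ratio $(r+j)/n < 1$ to produce the factor $1/(n-r-j)$. The bookkeeping you flag as the only obstacle is exactly what the paper's displayed chain of equalities carries out, so nothing is missing.
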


\begin{proof}
Let us briefly recall the well known manner in which an infinite geometric series can be expressed:
\begin{equation}\label{eq:occ_ancillary_geom_telescope}
\sum_{d=t+1}^\infty \lambda^d = \frac{\lambda^{t+1}}{1 - \lambda} \quad \text{for} \quad 0 \le \lambda < 1.
\end{equation}
Then starting from the premiss, we derive the desideratum as follows: 
\begin{equation}
\begin{aligned}
\Pr(X_t \le k \,\vert\, X_0 = r) 	&= \Pr(T_{r:k{+}1} > t) = \sum_{d=t+1}^\infty \Pr(T_{r:k{+}1} = d) \\
								&= \frac{n-k}{n} \sum_{d=t+1}^\infty \Pr(X_{d-1} = k \,\vert\, X_0 = r) & \text{(subst. \eqref{eq:occ_1st_hit_is_state})} \\
								&= \frac{n-k}{n} \sum_{d=t}^\infty \binom{n-r}{n-k} \sum_{j=0}^{k-r} (-1)^{k-r-j} \binom{k-r}{j} \left(\frac{j+r}{n}\right)^d & \text{(subst. \eqref{eq:explicit_from_eigendecomp})}   \\ 
								&=  \frac{n-k}{n} \binom{n-r}{n-k} \sum_{j=0}^{k-r} (-1)^{k-r-j} \binom{k-r}{j} \sum_{d=t}^\infty \left(\frac{j+r}{n}\right)^d &  \text{(reorder summation)} \\
								&=  \frac{n-k}{\cancel{n}} \binom{n-r}{n-k} \sum_{j=0}^{k-r} (-1)^{k-r-j} \binom{k-r}{j} \frac{\cancel{n}}{n^{t}} \frac{(r+j)^{t}}{n-r-j} & \text{(use \eqref{eq:occ_ancillary_geom_telescope})} \\
								&=  \frac{n-k}{n^t} \binom{n-r}{n-k} \sum_{j=0}^{k-r} (-1)^{k-r-j} \binom{k-r}{j} \frac{(r+j)^{t}}{n-r-j}.
\end{aligned}
\end{equation}
\end{proof}

Consequently, an explicit formula for the unconditional cumulative occupancy distribution is obtained as a special case by taking $r=0$:
\begin{equation}\label{eq:cdf_explicit}
F(k,t,n) = \frac{1}{n^t} \sum_{j=0}^{k} (-1)^{k-j} \binom{n-j-1}{n-k} \binom{n}{j} j^t \\ 
		 = \frac{(n)_{k+1}^{}}{n^t}  \frac{1}{k!} \sum_{j=0}^{k} (-1)^{k-j} \binom{k}{j} \frac{j^t}{n-j}.
\end{equation}

\Cref{prop:cdf_from_geom_telescope} thus generalizes the known expression for the c.d.f. \citep[cf.][]{feller1968ed3} to all initial-state conditioned distribution functions, $\Pr(X_t \le k \, \vert \, X_0 = r)$. Recalling expression \eqref{eq:def_fin_diff} from the \nameref{chpt:preliminaries}, we are able to compactly express the c.d.f. as a $k$th order finite difference,
\begin{equation}\label{eq:cdf_simple_finite_difference}
F(k,t,n) = \frac{(n)_{k+1}}{n^t} \frac{1}{k!} \Delta^k \left[ \frac{x^t}{n-x}  \right]_{x=0}^{},
\end{equation}
underscoring the simplicity of expression \eqref{eq:cdf_from_geom_telescope}. And more generally, 
\begin{equation}
\Pr(X_t = k \, \vert \, X_0 = r) = \frac{(n-r)_{k-r+1}}{n^t} \frac{1}{(k-r)!} \Delta^{k-r} \left[ \frac{x^t}{n-x}  \right]_{x=r}^{}.
\end{equation}

\subsubsection{CDF in terms of Conditionals}
Perhaps one of the most interesting proofs that is not included in the main text, is the following. The result is notable and its derivation a handy algebraic manipulation.

Recall that the cumulative distribution function $F$ is defined as the sum of probability masses over states $j = 0,1,2,...,k$, i.e.
\begin{equation}
\begin{aligned}
F(k,t,n) &= \sum_{j=0}^k p_{0,j}^{(t)} = \sum_{j=0}^k f(j,n,t) = \frac{1}{n^t} \sum_{j=0}^k (n)_j \stirlingtwo{t}{j}.
\end{aligned}		 
\end{equation}
From matrix manipulation of the eigendecomposition we are able to obtain an equivalent expression for $F$ as the sum of initial-state conditioned probability masses of a classical occupancy distributed random variable \emph{over a smaller population}, namely $[n-1]=\{1,2,...,n-1\}$.

First, we need the following auxiliary result.
\begin{corollary}\label{cor:SigmaInv_times_U}
The inverse of $U_n^{-1} \Sigma$ is given by
\begin{equation}
\Sigma_n^{-1} U_n^{} = 
\left[ \begin{array}{c|c}
		U_{n-1} 	& \mathbf{0} \\
		\hline 
		\mathbf{0'} & 1
\end{array} \right]
\end{equation}
\end{corollary}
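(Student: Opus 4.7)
The plan is to obtain this corollary as an immediate consequence of \Cref{lem:Uinv_times_Sigma} by simply inverting both sides of the identity stated there. Concretely, \Cref{lem:Uinv_times_Sigma} tells us that
\begin{equation*}
U_n^{-1} \Sigma \;=\; \left[ \begin{array}{c|c}
U_{n-1}^{-1} 	& \mathbf{0} \\
\hline
\mathbf{0}^\top 	& 1
\end{array} \right].
\end{equation*}
Since both $U_n$ and $\Sigma$ are invertible (each is upper-triangular with unit diagonal), the product $U_n^{-1}\Sigma$ is invertible and its inverse is $(U_n^{-1}\Sigma)^{-1} = \Sigma^{-1} U_n$ by the standard reversal rule.

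On the right-hand side I have a block-diagonal matrix, and the inverse of a block-diagonal matrix is the block-diagonal matrix of the inverses of its diagonal blocks. The top-left $(n \times n)$ block $U_{n-1}^{-1}$ inverts to $U_{n-1}$ (by \Cref{lem:eigenvector_mat_U}), and the bottom-right $1 \times 1$ block $1$ inverts to $1$. This yields
\begin{equation*}
\Sigma^{-1} U_n \;=\; \left[ \begin{array}{c|c}
U_{n-1} 	& \mathbf{0} \\
\hline
\mathbf{0}^\top 	& 1
\end{array} \right],
\end{equation*}
which is exactly the claim.

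There is really no hard step here; the only thing to verify, if one wanted to be fully rigorous, is that the block-inversion rule applies, which is immediate because the off-diagonal blocks of $U_n^{-1}\Sigma$ are zero. Alternatively one could give a direct computational proof by multiplying $\Sigma^{-1}$ (whose form is given in \eqref{eq:def_sigma_inverse}) by $U_n$ entrywise, using that $\binom{n-i}{n-j} - \binom{n-i}{n-j-1} = \binom{n-i-1}{n-j}$ (Pascal's rule); but the inversion argument above is considerably cleaner, so that is the route I would take.
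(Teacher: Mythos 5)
Your proof is correct and follows essentially the same route as the paper's: the paper likewise obtains the corollary by inverting the identity of \Cref{lem:Uinv_times_Sigma}, noting $(U_n^{-1}\Sigma)^{-1} = \Sigma^{-1}U_n$ and that the block-diagonal right-hand side inverts blockwise. Your write-up is in fact more explicit than the paper's one-line proof (which even contains a small typo, writing $U_n\Sigma$ for $U_n^{-1}\Sigma$); the only trivial quibble is that $(U_{n-1}^{-1})^{-1}=U_{n-1}$ is better cited to \Cref{lem:eigenvector_mat_U_inverse} than to \Cref{lem:eigenvector_mat_U}.
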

\begin{proof}
$(U_n^{} \Sigma) A = I_n^{}$ if and only if $A = (U_n^{} \Sigma)^{-1} = \Sigma^{-1} U_n^{}$.
\end{proof}
This is a direct corollary of \Cref{lem:Uinv_times_Sigma}. Summarizing, we have:
\begin{equation}\label{eq:Uinv_Sigma_and_SigmaInv_U} 
U_{n}^{-1} \Sigma = 
\left[
\begin{array}{c|c}
U_{n-1}^{-1} & \mathbf{0}  \\
\hline  				   
\mathbf{0'}  & 1 
\end{array}
\right], \quad
\Sigma^{-1} U_{n} = 
\left[
\begin{array}{c|c}
U_{n-1} & \mathbf{0}  \\
\hline  				   
\mathbf{0'}  & 1 
\end{array}
\right].
\end{equation}
And therefore, 
\begin{equation}\label{eq:Sigma_times_transition_matrix}
\begin{aligned}
P_{n}^t\Sigma &= U_{n}^{} \Lambda_{n}^t U_{n}^{-1} \Sigma \\
                &= \Big( \Sigma \Sigma^{-1} \Big) U_{n} \Lambda_{n}^t U_{n}^{-1} \Sigma  & \text{(identity matrix)}\\
                &= \Sigma \Big( \Sigma^{-1} U_{n}^{} \Big) \Lambda_{n}^t \Big( U_{n}^{-1} \Sigma \Big)  \\
                &= \Sigma  \left[
									\begin{array}{c|c}
									U_{n-1}^{-1} & \mathbf{0}  \\
									\hline  				   
									\mathbf{0'}  & 1 
									\end{array}
							\right]
							\left[
									\begin{array}{c|c}
									\Lambda_{n-1}^{-1} & \mathbf{0}  \\
									\hline  				   
									\mathbf{0'}  & 1 
									\end{array}
							\right]													
							\left[
									\begin{array}{c|c}
									U_{n-1} & \mathbf{0}  \\
									\hline  				   
									\mathbf{0'}  & 1 
									\end{array}
							\right] & \text{(by Eq. \eqref{eq:Uinv_Sigma_and_SigmaInv_U})} \\
                &=  \Sigma
					\left[
						\begin{array}{c|c}
						 U_{n-1} \Lambda_{n-1}^t U_{n-1}^{-1} & \mathbf{0}  \\
						\hline  				   
						\mathbf{0}'  & 1 
						\end{array}
					\right]  \\
                &= \frac{(n-1)^t}{n^t} \Sigma
					\left[
						\begin{array}{c|c}
						 U_{n-1} \frac{n^t}{(n-1)^t} \Lambda_{n-1}^t U_{n-1}^{-1} & \mathbf{0}  \\
						\hline  				   
						\mathbf{0}'  & \frac{n^t}{(n-1)^t} 
						\end{array}
					\right] & \text{(multiplying by 1)} \\
			    &= 	\frac{(n-1)^t}{n^t} \Sigma
					\left[
						\begin{array}{c|c}
						P_{n-1}^t & \mathbf{0}  \\
						\hline  				   
						\mathbf{0}'  & \frac{n^t}{(n-1)^t} 
						\end{array}
					\right] \\		
				&= 	\left[ \begin{array}{c|c} (1-\frac{1}{n})^t \Sigma P_{n-1}^t & \mathbf{1}  \end{array} \right].						
\end{aligned}
\end{equation}
The \textit{pointe} is that $\left(  \Sigma^{-1} U \right) \Lambda  \left( U^{-1} \Sigma \right) $  is a the transition matrix of a reduced (smaller population) Markov chain, $\widetilde{P}$, on which left-multiplication by $\Sigma$ has the effect of taking the cumulative sum column-wise from bottom to top. Hence, the following proposition.

\begin{proposition}\label{prop:clas_occ_cdf_as_sum_cond_probs_smaller_pop}
Let $X_t$ and  $\widetilde{X}_t$ denote random variables with classical occupancy distributions over populations $[n]=\{1,2,...,n\}$ and $[n-1]=\{1,2,...,n-1\}$, respectively. Then the cumulative distribution function of $X_t$ equals
\begin{equation}
\Pr(X_t \le k) = ( 1 - \frac{1}{n} )^t \sum_{j=0}^k  \Pr(\widetilde{X}_t = k \,\vert\, \widetilde{X}_0 = j),
\end{equation}
for all $1 \le k \le n-1$ and $t > 0$.
\end{proposition}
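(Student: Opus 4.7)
The plan is to follow the matrix manipulation presented in the equation chain \eqref{eq:Sigma_times_transition_matrix} to its conclusion, and then read off the $(0,k)$ entry on both sides. The central observation is that the scaling identity $\Lambda_n^t = \bigl[\begin{smallmatrix} (1-1/n)^t \Lambda_{n-1}^t & \mathbf{0} \\ \mathbf{0}' & 1 \end{smallmatrix}\bigr]$ holds, because the nonzero eigenvalues $j/n$ of $P_n$ ($0 \le j \le n-1$) factor as $(1-1/n) \cdot j/(n-1)$, i.e. exactly the eigenvalues of $P_{n-1}$ scaled uniformly by $1-1/n$. This is the ``pointe'' alluded to in the text and is the step I expect to be the main (conceptual, not technical) obstacle: identifying that the embedding of $P_{n-1}$ into the $(n+1) \times (n+1)$ picture amounts to stripping off the absorbing state at $n$ and uniformly rescaling the remaining eigenvalues.

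With that in hand, I would proceed as follows. First, start from $P_n^t \Sigma = U_n \Lambda_n^t U_n^{-1} \Sigma$, insert $\Sigma \Sigma^{-1} = I$ on the left, and apply \Cref{lem:Uinv_times_Sigma} together with its \Cref{cor:SigmaInv_times_U} to rewrite the middle factor as
\[
P_n^t \Sigma = \Sigma \bigl(\Sigma^{-1} U_n\bigr) \Lambda_n^t \bigl(U_n^{-1} \Sigma\bigr)
             = \Sigma \left[\begin{array}{c|c} U_{n-1} & \mathbf{0} \\ \hline \mathbf{0}' & 1 \end{array}\right] \Lambda_n^t \left[\begin{array}{c|c} U_{n-1}^{-1} & \mathbf{0} \\ \hline \mathbf{0}' & 1 \end{array}\right].
\]
Next, split $\Lambda_n^t$ using the scaling identity above, so that the inner triple product collapses to a block-diagonal matrix with top-left block $(1-1/n)^t U_{n-1} \Lambda_{n-1}^t U_{n-1}^{-1} = (1-1/n)^t P_{n-1}^t$ and bottom-right entry $1$.

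Finally, take the $(0,k)$ entry on both sides for $1 \le k \le n-1$. On the left, $\bigl[P_n^t \Sigma\bigr]_{0,k} = \sum_{j=0}^k [P_n^t]_{0,j} = \Pr(X_t \le k)$. On the right, the leftmost $\Sigma$ produces the \emph{column-wise cumulative sum from the top}, so
\[
\bigl[P_n^t \Sigma\bigr]_{0,k} = (1-\tfrac{1}{n})^t \sum_{j=0}^{n-1} \bigl[P_{n-1}^t\bigr]_{j,k} = (1-\tfrac{1}{n})^t \sum_{j=0}^{n-1} \Pr\bigl(\widetilde{X}_t = k \,\vert\, \widetilde{X}_0 = j\bigr).
\]
To match the stated form of the proposition, I would then invoke the pure-birth structure: $\Pr(\widetilde{X}_t = k \,\vert\, \widetilde{X}_0 = j) = 0$ whenever $j > k$, since the chain cannot decrease. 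Hence the tail summands vanish and the sum truncates to $j=0,1,\dots,k$, yielding the desideratum.
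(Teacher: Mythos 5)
Your proposal is correct and follows essentially the same route as the paper: the paper's own proof is precisely the manipulation in Eq.~\eqref{eq:Sigma_times_transition_matrix}, resting on \Cref{lem:Uinv_times_Sigma}, \Cref{cor:SigmaInv_times_U}, and the eigenvalue rescaling $j/n = (1-\tfrac{1}{n})\cdot j/(n-1)$, followed by reading off the top row. Your write-up is in fact slightly more careful than the paper's, since you make explicit both the block-diagonal splitting of $\Lambda_n^t$ and the truncation of $\sum_{j=0}^{n-1}$ to $\sum_{j=0}^{k}$ via the pure-birth structure, which the paper leaves implicit.
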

\begin{proof}
See foregoing derivation \eqref{eq:Sigma_times_transition_matrix} and consider the top row.
\end{proof}

\section{Concrete Example of a Path}
Denote a realization of the process $\mathbf{X}$ as the tuple, or vector: $[X_0, X_1, X_2, ..., X_{d}]^\top$. For example, a realization of this process, denoted $\mathbf{x}_t$, may look like
\begin{equation*}
\mathbf{x_{11}} = [0,1,2,2,3,4,5,5,5,6,7,8]^\top.
\end{equation*}
This is just one possible realization of a process $\mathbf{X}$ which transitioned in $t=11$ steps from state $0$ to state $8$ with one stop in state $2$ and two stops in state $5$, and zero stops in all other states. If the population (or alphabet) would equal $[n] = \{1,2,3,4,...,10\}$, the probability of this path would be equal to
\begin{equation*}
\Pr(\mathbf{X} = \mathbf{x_{11}}) = \frac{n (n-1) \cdot  2 \cdot (n-2)(n-3)(n-4) \cdot 5  \cdot 5 \cdot(n-5)(n-6)(n-7)(n-8)}{n^{11}},
\end{equation*}
with $n=10$. It is immediately clear that each $t$-step $(0 \to k)$-path always needs to contain all transitions between $0$ and $k$. There is however, ``freedom" in the choice of stops over states $\{1,2,...,k\}$ when $t > k$. It is exactly there that Stirling numbers of the second kind come into play.

The dual representation of path $\mathbf{x_{11}}$, is as a collection of the waiting times per state (excluding the last), concretely
\begin{equation}
\mathbf{w_{0:7}} = [0,0,1,0,0,2,0,0]^\top,
\end{equation}
that represent the time spent in each state.

\section{Concrete Matrix Examples}\label{apx:matrix_examples}

\subsection{Sum and Difference Matrices}
A $10 \times 10$ sum matrix:
\begin{equation}
\Sigma = \\
\begin{bmatrix*}[r]
1 & 1 	& 1 	& 1 	& 1 	& 1 	& 1  	& 1 	& 1  & 1  \\
0 & 1 	& 1 	& 1 	& 1 	& 1 	& 1  	& 1 	& 1  & 1  \\
0 & 0  	& 1 	& 1 	& 1 	& 1 	& 1  	& 1 	& 1  & 1  \\
0 & 0  	& 0  	& 1 	& 1 	& 1 	& 1  	& 1 	& 1  & 1  \\
0 & 0  	& 0  	& 0   	& 1 	& 1	 	& 1  	& 1 	& 1  & 1  \\
0 & 0  	& 0  	& 0   	& 0     & 1  	& 1 	& 1  	& 1  & 1  \\
0 & 0  	& 0  	& 0   	& 0   	& 0   	& 1  	& 1 	& 1  & 1  \\
0 & 0  	& 0  	& 0   	& 0   	& 0    	& 0   	& 1 	& 1  & 1  \\
0 & 0  	& 0  	& 0   	& 0   	& 0    	& 0   	& 0   	& 1  & 1  \\
0 & 0  	& 0  	& 0   	& 0   	& 0    	& 0   	& 0   	& 0  & 1 
\end{bmatrix*}. 
\end{equation}

Its square:
\begin{equation}
\Sigma^2 = \\
\begin{bmatrix*}[r]
\phantom{0}1 & \phantom{0}2 & \phantom{0}3 & \phantom{0}4 & \phantom{0}5 & \phantom{0}6 & \phantom{0}7 & \phantom{0}8 & \phantom{0}9 & 10 \\
0 & 1 & 2 & 3 & 4 & 5 & 6 & 7 & 8 & 9  \\
0 & 0 & 1 & 2 & 3 & 4 & 5 & 6 & 7 & 8  \\
0 & 0 & 0 & 1 & 2 & 3 & 4 & 5 & 6 & 7  \\
0 & 0 & 0 & 0 & 1 & 2 & 3 & 4 & 5 & 6  \\
0 & 0 & 0 & 0 & 0 & 1 & 2 & 3 & 4 & 5  \\
0 & 0 & 0 & 0 & 0 & 0 & 1 & 2 & 3 & 4  \\
0 & 0 & 0 & 0 & 0 & 0 & 0 & 1 & 2 & 3  \\
0 & 0 & 0 & 0 & 0 & 0 & 0 & 0 & 1 & 2  \\
0 & 0 & 0 & 0 & 0 & 0 & 0 & 0 & 0 & 1 
\end{bmatrix*}. 
\end{equation}

Its inverse, a $10 \times 10$ difference matrix:
\begin{equation}
\Sigma^{-1} = \\
\begin{bmatrix*}[r]
1 & -1 & 0 	& 0 	& 0 	& 0 	& 0  	& 0 	& 0  	& 0 	\\
0 & 1  & -1 & 0  	& 0 	& 0   	& 0 	& 0  	& 0 	& 0  	\\
0 & 0  & 1  & -1  	& 0  	& 0  	& 0 	& 0 	& 0  	& 0 	\\
0 & 0  & 0  & 1   	& -1  	& 0  	& 0 	& 0  	& 0 	& 0  	\\
0 & 0  & 0  & 0   	& 1   	& -1   	& 0  	& 0 	& 0  	& 0 	\\
0 & 0  & 0  & 0   	& 0   	& 1    	& -1  	& 0   	& 0 	& 0  	\\
0 & 0  & 0  & 0   	& 0   	& 0    	& 1   	& -1  	& 0  	& 0 	\\
0 & 0  & 0  & 0   	& 0   	& 0    	& 0   	& 1   	& -1 	& 0  	\\
0 & 0  & 0  & 0   	& 0   	& 0    	& 0   	& 0   	& 1  	& -1 	\\
0 & 0  & 0  & 0   	& 0   	& 0    	& 0   	& 0   	& 0  	&  1 
\end{bmatrix*}. 
\end{equation}

\subsection{Transition Matrix and Related Matrices}\label{apx:matrix_examples_transmat}
A $10 \times 10$ transition matrix $P$ corresponding to the classical occupancy chain:
\begin{equation}
P = \dfrac{1}{9} \begin{bmatrix*}[c]
0 & 9 & 0 & 0 & 0 & 0 & 0 & 0 & 0 & 0  	\\
0 & 1 & 8 & 0 & 0 & 0 & 0 & 0 & 0 & 0   \\
0 & 0 & 2 & 7 & 0 & 0 & 0 & 0 & 0 & 0   \\
0 & 0 & 0 & 3 & 6 & 0 & 0 & 0 & 0 & 0   \\
0 & 0 & 0 & 0 & 4 & 5 & 0 & 0 & 0 & 0   \\
0 & 0 & 0 & 0 & 0 & 5 & 4 & 0 & 0 & 0   \\
0 & 0 & 0 & 0 & 0 & 0 & 6 & 3 & 0 & 0   \\
0 & 0 & 0 & 0 & 0 & 0 & 0 & 7 & 2 & 0   \\
0 & 0 & 0 & 0 & 0 & 0 & 0 & 0 & 8 & 1   \\
0 & 0 & 0 & 0 & 0 & 0 & 0 & 0 & 0 & 9   \\
\end{bmatrix*}.
\end{equation}

Examples of a $10 \times 10$ upper-triangular, downward-pointing Pascal matrix and its inverse:
\begin{equation}\label{eq:apx_U_binomcoeff}
U\phantom{^{-1} \Sigma} = \\
\begin{bmatrix*}[c]
\binom{9}{0} & \binom{9}{1} & \binom{9}{2} & \binom{9}{3} & \binom{9}{4} & \binom{9}{5} & \binom{9}{6} & \binom{9}{7} & \binom{9}{8} & \binom{9}{9} \\
0 & \binom{8}{0} & \binom{8}{1}  & \binom{8}{2} & \binom{8}{3}  & \binom{8}{4}  & \binom{8}{5} & \binom{8}{6} & \binom{8}{7} & \binom{8}{8} \\
0 & 0 & \binom{7}{0}  & \binom{7}{1}  & \binom{7}{2}  & \binom{7}{3}  & \binom{7}{4} & \binom{7}{5} & \binom{7}{6} & \binom{7}{7} \\
0 & 0 & 0  & \binom{6}{0}  & \binom{6}{1}   & \binom{6}{2}  & \binom{6}{3} & \binom{6}{4} & \binom{6}{5} & \binom{6}{6} \\
0 & 0 & 0  & 0  & \binom{5}{0}   & \binom{5}{1}   & \binom{5}{2} & \binom{5}{3} & \binom{5}{4} & \binom{5}{5} \\
0 & 0 & 0  & 0  & 0   & \binom{4}{0}   & \binom{4}{1}  & \binom{4}{2}  & \binom{4}{3} & \binom{4}{4} \\
0 & 0 & 0  & 0  & 0   & 0   & \binom{3}{0}  & \binom{3}{1}  & \binom{3}{2} & \binom{3}{3} \\
0 & 0 & 0  & 0  & 0   & 0   & 0  & \binom{2}{0}  & \binom{2}{1} & \binom{2}{2} \\
0 & 0 & 0  & 0  & 0   & 0   & 0  & 0  & \binom{1}{0} & \binom{1}{1} \\
0 & 0 & 0  & 0  & 0   & 0   & 0  & 0  & 0 & \binom{0}{0}
\end{bmatrix*},
\end{equation}

\begin{equation}\label{eq:apx_U_binomcoeff_explicit}
U\phantom{^{-1}\Sigma} = \\
\begin{bmatrix*}[r]
1 & \phantom{-}9 & 36 & \phantom{-}84 & 126 & \phantom{-}126 & 84 & \phantom{-}36 & \phantom{-}9 & \phantom{-}1 \\
0 & 1 & \phantom{-}8  & 28 & \phantom{-}56  & 70  & \phantom{-}56 & 28 & 8 & 1 \\
0 & 0 & 1  & 7  & 21  & 35  & 35 & 21 & 7 & 1 \\
0 & 0 & 0  & 1  & 6   & 15  & 20 & 15 & 6 & 1 \\
0 & 0 & 0  & 0  & 1   & 5   & 10 & 10 & 5 & 1 \\
0 & 0 & 0  & 0  & 0   & 1   & 4  & 6  & 4 & 1 \\
0 & 0 & 0  & 0  & 0   & 0   & 1  & 3  & 3 & 1 \\
0 & 0 & 0  & 0  & 0   & 0   & 0  & 1  & 2 & 1 \\
0 & 0 & 0  & 0  & 0   & 0   & 0  & 0  & 1 & 1 \\
0 & 0 & 0  & 0  & 0   & 0   & 0  & 0  & 0 & 1
\end{bmatrix*},
\end{equation}

\begin{equation}\label{eq:apx_Uinv_binomcoeff_explicit}
U^{-1}\phantom{\Sigma} = \\
\begin{bmatrix*}[r]
1 & -9 & 36 & -84 & 126 & -126 & 84  & -36 & 9  & -1 	\\
0 & 1  & -8 & 28  & -56 & 70   & -56 & 28  & -8 &  1	\\
0 & 0  & 1  & -7  & 21  & -35  & 35  & -21 & 7  & -1 	\\
0 & 0  & 0  & 1   & -6  & 15   & -20 & 15  & -6 &  1  	\\
0 & 0  & 0  & 0   & 1   & -5   & 10  & -10 & 5  & -1 	\\
0 & 0  & 0  & 0   & 0   & 1    & -4  & 6   & -4 &  1  	\\
0 & 0  & 0  & 0   & 0   & 0    & 1   & -3  & 3  & -1 	\\
0 & 0  & 0  & 0   & 0   & 0    & 0   & 1   & -2 &  1  	\\
0 & 0  & 0  & 0   & 0   & 0    & 0   & 0   & 1  & -1 	\\
0 & 0  & 0  & 0   & 0   & 0    & 0   & 0   & 0  &  1 
\end{bmatrix*},
\end{equation}

\begin{equation}\label{eq:apx_UinvSigma_binomcoeff_explicit}
U^{-1}\Sigma = \\
\begin{bmatrix*}[r]
1 & -8 & 28 & -56 	&  70  	& -56 	&  28  	&  8  &  1  &  0  \\
0 & 1  & -7 & 21  	& -35 	& 35  	& -21   &  7  & -1  &  0  \\
0 & 0  & 1  & -6  	& 15  	& -20 	&  15   & -6  &  1  &  0  \\
0 & 0  & 0  & 1   	& -5  	& 10  	& -10   &  5  & -1  &  0  \\
0 & 0  & 0  & 0   	& 1   	& -4  	&   6   & -4  &  1  &  0  \\
0 & 0  & 0  & 0   	& 0   	& 1   	&  -3   &  3  & -1  &  0  \\
0 & 0  & 0  & 0   	& 0   	& 0   	&   1   & -2  &  1  &  0  \\
0 & 0  & 0  & 0   	& 0   	& 0   	&   0   &  1  & -1  &  0  \\
0 & 0  & 0  & 0   	& 0   	& 0   	&   0   &  0  &  1  &  0  \\
0 &	\phantom{-}0&	\phantom{-}0&	\phantom{-3}0&	\phantom{-8}0&	\phantom{-1}0&	\phantom{-1}0&	\phantom{-8}0&	\phantom{-3}0&	\phantom{-}1 
\end{bmatrix*}.
\end{equation}

\newpage
\subsection{Cumulative Distribution Matrix and Related Matrices}\label{apx:matrix_examples_ccdf_mat}
A $9 \times 9$ complementary cumulative distribution matrix, $C$ (cf. \Cref{sec:cocc_matrix_implementation}, Eq. \eqref{eq:ccdf_recur_matrixform}):
\begin{equation}
C = \dfrac{1}{9}\begin{bmatrix*}[c]
9  & 8 & 0 & 0 & 0 & 0 & 0 & 0 & 0 \\
0  & 1 & 7 & 0 & 0 & 0 & 0 & 0 & 0 \\
0  & 0 & 2 & 6 & 0 & 0 & 0 & 0 & 0 \\
0  & 0 & 0 & 3 & 5 & 0 & 0 & 0 & 0 \\
0  & 0 & 0 & 0 & 4 & 4 & 0 & 0 & 0 \\
0  & 0 & 0 & 0 & 0 & 5 & 3 & 0 & 0 \\
0  & 0 & 0 & 0 & 0 & 0 & 6 & 2 & 0 \\
0  & 0 & 0 & 0 & 0 & 0 & 0 & 7 & 1 \\
0  & 0 & 0 & 0 & 0 & 0 & 0 & 0 & 8 \\
\end{bmatrix*}.
\end{equation}

\noindent The matrices of eigenvectors of $C$:
\begin{equation}
V \phantom{^{-1}} = \begin{bmatrix*}[r]
1 & -1 & -8 & -28 & -56 & -70 & -56 & -28 & -8 \\
0 & 1  & 7  & 21  & 35  & 35  & 21  & 7   & 1  \\
0 & 0  & 1  & 6   & 15  & 20  & 15  & 6   & 1  \\
0 & 0  & 0  & 1   & 5   & 10  & 10  & 5   & 1  \\
0 & 0  & 0  & 0   & 1   & 4   & 6   & 4   & 1  \\
0 & 0  & 0  & 0   & 0   & 1   & 3   & 3   & 1  \\
0 & 0  & 0  & 0   & 0   & 0   & 1   & 2   & 1  \\
0 & 0  & 0  & 0   & 0   & 0   & 0   & 1   & 1  \\
0 & 0  & 0  & 0   & 0   & 0   & 0   & 0   & 1 
\end{bmatrix*},
\end{equation}

\begin{equation}
V^{-1} = \begin{bmatrix*}[r]
1 & 1  & 1  & 1   & 1   & 1   & 1   & 1  & 1  \\
0 & 1  & -7 & 21  & -35 & 35  & -21 & 7  & -1 \\
0 & 0  & 1  & -6  & 15  & -20 & 15  & -6 & 1  \\
0 & 0  & 0  & 1   & -5  & 10  & -10 & 5  & -1 \\
0 & 0  & 0  & 0   & 1   & -4  & 6   & -4 & 1  \\
0 & 0  & 0  & 0   & 0   & 1   & -3  & 3  & -1 \\
0 & 0  & 0  & 0   & 0   & 0   & 1   & -2 & 1  \\
0 & 0  & 0  & 0   & 0   & 0   & 0   & 1  & -1 \\
0 &	\phantom{-}0  & 0  & \phantom{-2}0   &	0   & 0   &	 0  & \phantom{-2}0  &	1 
\end{bmatrix*}.
\end{equation}


\end{document}